\numberwithin{equation}{section}
\newtheorem{Theorem}{Theorem}[section] 
\newtheorem{Corollary}[Theorem]{Corollary}
\newtheorem{Lemma}[Theorem]{Lemma}
\newtheorem{Proposition}[Theorem]{Proposition}
\NewDocumentEnvironment{prob}{o}{%
  \IfValueT{#1}{%
    \addtocounter{probleminternal}{-1}%
  }%
  \probleminternal%
}
{\endprobleminternal}
\theoremstyle{definition}
\newtheorem{Definition}[Theorem]{Definition}
\newtheorem{Example}[Theorem]{Example}
\newtheorem{Conjecture}[Theorem]{Conjecture}
\newtheorem{Problem}[Theorem]{Problem}
\newtheorem{Remark}[Theorem]{Remark}
\theoremstyle{remark}
\newcommand{\refb}[1]{(\ref{#1})}
\newcommand{\be}{\begin{enumerate}}
\newcommand{\ene}{\end{enumerate}}
\newcommand{\ZZ}{\mathbb{Z}}
\newcommand{\NN}{\mathbb{N}}
\newcommand{\PP}{\mathbb{P}}
\newcommand{\CC}{\mathbb{C}}
\newcommand{\II}{\mathbb{I}}
\newcommand{\DD}{\mathbb{D}}
\newcommand{\OO}{\mathbb{O}}
\newcommand{\Spec}{\operatorname{Spec}\nolimits}
\newcommand{\End}{\operatorname{End}}
\newcommand{\Hom}{\operatorname{Hom}}
\newcommand{\Ext}{\operatorname{Ext}}
\newcommand{\Ima}{\operatorname{Im}}
\newcommand{\omod}{\operatorname{mod}}
\newcommand{\coker}{\operatorname{coker}}
\newcommand{\IC}{\operatorname{IC}}
\newcommand{\IClam}{\operatorname{IC}(\lambda)}
\newcommand{\ICmu}{\operatorname{IC}(\mu)}
\newcommand{\ICnu}{\operatorname{IC}(\nu)}
\newcommand{\maxs}{\operatorname{max}}
\newcommand{\mins}{\operatorname{min}}
\newcommand{\Rep}{\operatorname{Rep}}
\newcommand{\Res}{\operatorname{Res}}
\newcommand{\Gr}{{\operatorname{Gr}}}
\newcommand{\GL}{{\operatorname{GL}}}
\newcommand{\undd}{\underline{\operatorname{dim}}}
\newcommand{\add}[1]{\operatorname{add}({#1})}
\newcommand{\odim}{\operatorname{dim}}
\newcommand{\Dim}{\operatorname{Dim}}
\newcommand{\soc}{\operatorname{soc}}
\newcommand{\Rproj}{R-\operatorname{proj}}
\newcommand{\Rfin}{R-\operatorname{gmod}}
\newcommand{\KR}{K_0(R)}
\newcommand{\HH}{\operatorname{H}}
\newcommand{\Ch}{\operatorname{Ch}}
\newcommand{\Id}{\operatorname{Id}}
\newcommand{\rank}{\operatorname{rank}}
\newcommand{\ind}{\mathop{\text{\rm ind}}\nolimits}
\newcommand{\KP}[1]{\operatorname{KP}({#1})}
\newcommand{\KQ}[1]{\operatorname{KQ}({#1})}
\newcommand{\Po}[1]{\operatorname{P}_q({#1})}
\newcommand{\fg}{\operatorname{\mathfrak{g}}}
\newcommand{\fn}{\operatorname{\mathfrak{n}}}
\newcommand{\Una}{U_q(\operatorname{\mathfrak{n}})_\mathcal{A}}
\newcommand{\Qp}{Q^+}
\newcommand{\la}{\langle}
\newcommand{\ra}{\rangle}
\newcommand{\wI}[1]{\langle I\rangle_{#1}}
\newcommand{\bd}{{\mathbf{d}}}
\newcommand{\bb}{\mathbf{b}}
\newcommand{\bfi}{\mathbf{i}}
\newcommand{\bfj}{\mathbf{j}}
\newcommand{\bfk}{\mathbf{k}}
\newcommand{\bB}{\mathbf{B}}
\newcommand{\bT}{\mathbf{T}}
\newcommand{\cF}{\mathcal{F}}
\newcommand{\cP}{\mathcal{P}}
\newcommand{\cQ}{\mathcal{Q}}
\newcommand{\cL}{\mathcal{L}}
\newcommand{\cA}{\mathcal{A}}
\newcommand{\cM}{\mathcal{M}}
\newcommand{\cH}{\mathcal{H}}
\newcommand{\cE}{\mathcal{E}}
\newcommand{\eqm}{\stackrel{\sim}{\rightarrow}}
\newcommand{\adb}{{\alpha,\beta}}
\newcommand{\apb}{{\alpha+\beta}}
\newcommand{\tM}[1]{\widetilde{{#1}}}
\newcommand{\cExt}[3]{\cE xt_{#1}({#2},{#3})}
\newcommand{\squw}[2]{[{#1}_{1}{#1}_{2}\cdots{#1}_{#2}]}
\newcommand{\squ}[2]{({#1}_{1},{#1}_{2},\cdots,{#1}_{#2})}
\newcommand{\squg}[2]{{#1}_{i_1}{#1}_{i_2}\cdots {#1}_{#2}}
\newcommand{\squgi}[2]{{#1}_{#2}\cdots {#1}_{i_2} {#1}_{i_1}}
\newcommand{\RepQ}{E_\alpha(Q)}
\newcommand{\Fit}{\tM{\cF}_\bfi}
\newcommand{\fMb}[2]{{\mathfrak{M}^\bullet}({#1},{#2})}
\newcommand{\fMba}[1]{{\mathfrak{M}_0^\bullet}({#1})}
\newcommand{\fMbr}[1]{{\mathfrak{M}_0^{\bullet reg}}({#1})}
\newcommand{\lara}[2]{\la{#1},{#2}\ra}
\newcommand{\Conv}[1]{\Ext_{G_\alpha}^\bullet(\cL_\alpha,{#1})}
\newcommand{\Seteq}[2]{=\left\{{#1}\mid\,{#2}\right\}}
\newcommand{\Set}[2]{\left\{{#1}\mid\,{#2}\right\}}
\newcommand{\simppro}[2]{L({#1})\circ L({#2})}
\newcommand{\ext}{\operatorname{ext}}
\newcommand{\munu}{{\mu,\nu}}
\newcommand{\Aut}[1]{\operatorname{Aut}_Q({#1})}
\newcommand{\inda}{{h\in \Omega}}
\newcommand{\indv}{{i\in I}}
\newcommand{\homq}[2]{\Hom_Q({#1},{#2})}
\newcommand{\extq}[2]{\Ext_Q^1({#1},{#2})}
\newcommand{\bdg}{{\beta,\gamma}}
\title{The product of simple modules over KLR algebras and quiver Grassmannians}
\author{Yingjin Bi}
\address{
	Department of Mathematical Sciences\\ Harbin Engineering University\\ 
	Harbin 150001 P.R.China}
\email{yingjinbi@mail.bnu.edu.cn}
\urladdr{\textrm{\textit{ORCiD}:} \href{https://orcid.org/0000-0003-0153-3274}{orcid.org/0000-0003-0153-3274}}
\keywords{Representation theory, Quantum groups, KLR algebras, Categorification.} 
\subjclass[2020] {17B37, 13E10, 20C08, 18D, 81R10}
\begin{document}

\begin{abstract}
In this paper, we study the product of two simple modules over KLR algebras using the quiver Grassmannians for Dynkin quivers. More precisely, we establish a bridge between the Induction functor on the category of modules over KLR algebras and the irreducible components of quiver Grassmannians for Dynkin quivers via a sort of extension varieties, which is an analog of the extension group in Hall algebras. As a result, we give a necessary condition when the product of two simple modules over a KLR algebra is simple using the set of irreducible components of quiver Grassmannians. In particular, in some special cases, we provide proof for the conjecture recently proposed by Lapid and Minguez.
\end{abstract}

\maketitle

\vspace*{6pt}\tableofcontents  

\section{Introduction}
Let $\fg$ be a simple Lie algebra of simply laced type and $U_q(\fn)$ be the half-part of the corresponding quantum group. In \cite{Lu}, Lusztig introduced the canonical basis $\bB$ of $U_q(\fn)$ via the geometrization of $U_q(\fn)$. We call the dual of $\bB$ the dual canonical basis, which is denoted by $\bB^*$. The Hopf algebra $U_q(\fn)$ induces a multiplication on its dual space. We denote by $A_q(\fn)$ the dual space of $U_q(\fn)$. 

It is crucial to study the product of two dual canonical basis elements. For instance, in \cite{BZ} Berenstein and Zelevinsky conjectured that the product of two dual canonical basis elements $\bb_1^*$ and $\bb_2^*$, up to some $q$-power scaling, belongs in $\bB^*$ if and only if $\bb_1^*\bb_2^*=q^n\bb_2^*\bb_1^*$ for some integer $n$. However, Leclerc showed that this conjecture is not true in general, (see \cite{Lec03}), and then proposed the notion of real dual canonical base elements $\bb^*$, \emph{i.e.} $\bb^{*}\bb^*\in q^\ZZ \bB^*$. To modify \cite[Conjecture 1.7]{BZ}, Leclerc proposed the following conjecture.
\begin{Conjecture}{\cite[Conjecture 3.1]{Lec03}}\label{con_lec}
Let $\bb_1^*$ and $\bb_2^*$ be two dual canonical base elements of $\cA_q(\fn)$ such that one of them is real. Suppose that $\bb_1^*\bb_2^*\notin q^\ZZ \bB^*$. The expression of $\bb_1^*\bb_2^*$ is of the following form:
\begin{equation}
  \bb_1^*\bb_2^*=q^m \bb'^{*}+q^s\bb''^*+\sum_{\bb^*\neq \bb'^*,\bb''^*} \gamma_{\bb_1^*,\bb_2^*}^{\bb^*}(q)\bb^*
\end{equation}
where $\bb'^*\neq \bb''^*$ and $m<s\in \ZZ$, and for any $\bb^*\neq \bb'^*,\bb''^*$, we have 
\begin{equation}
  \gamma_{\bb_1^*,\bb_2^*}^{\bb^*}(q)\in q^{m+1}\ZZ[q]\cap q^{s-1}\ZZ[q^{-1}].
\end{equation}
\end{Conjecture}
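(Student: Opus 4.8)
\emph{Step 1: passing to modules.} The plan is to transport the identity to the categorification of $\cA_q(\fn)$ by finite-dimensional graded $R_Q$-modules, and then to extract the $q$-degrees from the geometry of the quiver Grassmannians of $Q$. Write $M=L(\mu)$ and $N=L(\nu)$ for the self-dual simple modules attached to $\bb_\mu^*$ and $\bb_\nu^*$. Under the standard categorification isomorphism the product $\bb_\mu^*\bb_\nu^*$ is, up to one explicit integer power $q^{d}$ depending only on $\alpha$ and $\beta$, the class $[M\circ N]$ of the induced module; thus the conjecture becomes a statement about the graded composition multiplicities $[M\circ N:L(\lambda)]_q\in\NN[q^{\pm1}]$ for $\lambda\in\KP{\alpha+\beta}$ (these lie in $\NN[q^{\pm1}]$ by the Varagnolo--Vasserot / Lauda--Vazirani geometric realization), together with a description of the extreme $q$-degrees that occur in them.

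\emph{Step 2: the two leading terms.} Since one of $M,N$ is real, $M\circ N$ has a unique simple quotient $S'$ and a unique simple submodule $S''$, with $S'\not\cong S''$ precisely when $M\circ N$ is reducible, i.e. precisely in our case $\bb_\mu^*\bb_\nu^*\notin q^{\ZZ}\bB^*$; moreover $S''$ is, up to a grading shift, the simple head of $N\circ M$. I would first show that $S'$ and $S''$ each occur in $M\circ N$ with graded multiplicity a single monomial, say $q^{m}$ and $q^{s}$ after the shift $q^{d}$, using the head/socle short exact sequences together with the duality $D(M\circ N)\cong q^{d}\,N\circ M$, which interchanges $S'$ with $S''$ and $q$ with $q^{-1}$. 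Taking $\{\bb'^*,\bb''^*\}=\{[S'],[S'']\}$, ordered so that the exponents increase, produces the two leading terms $q^{m}\bb'^*+q^{s}\bb''^*$. The strict inequality $m<s$ is then the statement $\mathfrak{d}(M,N)=\tfrac12\bigl(\Lambda(M,N)+\Lambda(N,M)\bigr)>0$, which holds because the R-matrix $M\circ N\to N\circ M$ has a pole of positive order exactly when it is not an isomorphism, i.e. exactly when $M\circ N$ is reducible.

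\emph{Step 3: bounding the remaining coefficients.} It remains to prove $\gamma_{\bb_\mu^*,\bb_\nu^*}^{\bb^*}(q)\in q^{m+1}\ZZ[q]\cap q^{s-1}\ZZ[q^{-1}]$ for every other $\bb^*$; equivalently, that every constituent $L(\lambda)$ other than $S'$ and $S''$ occurs only in degrees lying strictly inside the window $(m,s)$. This is where the bridge of the present paper enters: I would identify each multiplicity $[M\circ N:L(\lambda)]_q$, through the extension varieties (the Hall-algebra-style analogue of $\Ext^1$) and the necessary condition for simplicity proved here, with the Poincar\'e polynomial of a suitable irreducible component of a quiver Grassmannian inside $E_{\alpha+\beta}$, the extreme $q$-powers of the polynomial being governed by the dimension of that component. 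The head $S'$ and the socle $S''$ then correspond to the two ``generic'' components that obstruct simplicity, while every other component has strictly smaller dimension and hence contributes strictly inside $(m,s)$ on the $\ZZ[q]$ side; the duality $D(M\circ N)\cong q^{d}\,N\circ M$ transports this to the $\ZZ[q^{-1}]$ side. Assembling Steps~1--3 reconstructs the displayed expression.

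\emph{The main obstacle.} The genuinely difficult point is the dimension estimate of Step~3: proving that \emph{no} intermediate component of the quiver Grassmannian is large enough to reach degree $m$ or degree $s$, equivalently that the head and the socle are the only constituents of $\bb_\mu^*\bb_\nu^*$ touching the boundary of the degree window. This is not available uniformly --- it amounts to a parity/positivity control of the relevant $\mathrm{IC}$ stalks that is, in effect, as hard as the conjecture itself --- so I expect the argument to close only in favourable situations: when the quiver Grassmannians involved are smooth, when $\alpha+\beta$ is small, or when $M$ and $N$ are ``almost orthogonal'' (for instance $\mathfrak{d}(M,N)=1$), where the extension-variety picture forces $[M\circ N]$ to consist of the head, the socle, and at most one further constituent, and the bound on $\gamma$ reduces to a single dimension count.
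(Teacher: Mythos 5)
First, a structural observation: the paper does not give its own proof of this statement. It is presented as Leclerc's conjecture, and the paper explicitly attributes its proof to Kang--Kashiwara--Kim--Oh (\cite[Corollary 4.1.2]{KKKO}) via the $R$-matrix formalism on $R_Q$-modules together with McNamara's degree estimate \cite[Lemma 7.5]{McN3}. So there is no in-paper argument against which to compare your Steps~1 and~2; those two steps are, in essence, a correct summary of the cited KKKO route (unique simple head/socle of $M\circ N$ when one factor is real, the duality $D(M\circ N)\cong q^{d}\,N\circ M$ interchanging head and socle and $q\leftrightarrow q^{-1}$, and positivity of the denominator exponent of the $R$-matrix giving $m<s$).

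The genuine gap is Step~3, and you flag it yourself. The machinery of this paper does not deliver the two-sided strict window $\gamma_{\bb_\mu^*,\bb_\nu^*}^{\bb^*}(q)\in q^{m+1}\ZZ[q]\cap q^{s-1}\ZZ[q^{-1}]$. Theorem~\ref{pro_topdegree} and its module-theoretic translation Theorem~\ref{them:multiplicitycirc} give only a one-sided \emph{upper} bound, $\deg K_\lambda(\mu,\nu)\le 2e_\lambda(\mu,\nu)+d_\lambda(\mu,\nu)$, plus an exact Poincar\'e-polynomial formula only when $(\mu,\nu)\in\ext_{\alpha,\beta}^{ger}(\lambda)$, i.e.\ on the generic components. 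For the intermediate $\lambda$'s, which are precisely the ones the conjecture is about, no such formula is available, because the IC stalks over the singular orbit closures enter (the paper's perverse-sheaf decomposition hides these in the unanalysed summands $P$ and $Q$ of the Restriction). Worse, the dependence in the paper runs the other way: in the proof of Theorem~\ref{theo_submodule} the paper invokes \cite[Lemma 7.5]{McN3}, which is exactly the degree control you are trying to re-derive, as an \emph{input} to identify the socle with $L(\mu*\nu)$. So using the quiver-Grassmannian picture to establish the conjecture would be circular as it stands. Your closing caveat, that the argument might close only when the Grassmannians are smooth, the rank is small, or $\mathfrak{d}(M,N)=1$, is accurate; in full generality Step~3 is not a proof.
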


\noindent
One of the powerful approaches is to categorify of the algebra $U_q(\fn)$ ( or $A_q(\fn)$) using the categories of modules over quiver Hecke algebras $R_Q$, which is introduced by Khovanov and Lauda in \cite{KL09}. This allows us to regard the dual canonical base elements as the simple modules over $R_Q$, see \cite{VV} for more details. 

Based on these results, Kang, Kashiwara, Kim, and Oh use the notion of $R$-matrix to prove Leclerc's conjecture \ref{con_lec}, (see \cite{KKKO}). Meanwhile, Lapid and Minguez proposed the \cite[Conjecture 5.1]{LM2}, which states that $\bb^{\prime *}$ and $\bb^{\prime\prime *}$ are related to the generic extensions of irreducible components of Lusztig's nilpotent varieties. In other words, they study the product of two dual canonical basis elements in terms of representations of preprojective algebras.\\ 

\noindent
However, the $R$-matrices on the tensor product of two simple modules over $R$ can't reveal the explicit knowledge of basis $\bB^*$, such as the combinatorics knowledge of $\bB^*$. The natural indexing of dual canonical basis is the set of Kostant partitions. It is natural to ask the following question: How to describe the condition $\bb_1^*\bb_2^*\in q^\ZZ\bB^*$ in terms of the Kostant partition indexing.

Before starting this, let us review some notations concerning this topic. Let us consider an element $\alpha$ in $\Qp:=\NN[\alpha_i]_{i\in I}$ and denote by $\KP{\alpha}$ the set of Kostant partitions indexing the isomorphic class (up to shifted) of simple modules over the algebra $R_\alpha$, \emph{i.e.} the $\alpha$-part of $R_Q$. We remark that $\KP{\alpha}$ can be thought of as the set of isomorphic class of representations of $Q$ with dimension vector $\alpha$. Here is our question:
\begin{Problem}\label{Pro_main}
Let $\adb$ be two elements in $\Qp$ and $(\munu)\in \KP{\alpha}\times \KP{\beta}$ be a pair of Kostant partitions. Describe the condition on the pair $(\munu)$ such that the Induction $L(\mu)\circ L(\nu)$ of the simple modules $L(\mu)$ and $L(\nu)$ is also a simple module.
\end{Problem}
This question is proposed by Kleshchev and Ram in \cite[Problem 7.6]{KR}. But they used the words of the set of vertices of $Q$ to describe this condition rather than the Kostant partitions.  In type $A_n$, Lapid and Minguez use the combinatorial conditions of multisegments to deal with this problem, see \cite{LM1},\cite{LM3},\cite{LM2} for more details. Inspired by their works, we will use the representation theory of quivers to describe the condition in the Dynkin case. \\

\noindent
The goal of the present paper is to establish a bridge between representations of $Q$ and modules of $R_Q$. More precisely, we will answer Problem \ref{Pro_main} in terms of the irreducible components of quiver Grassmannians for the quiver $Q$. We will also give a sufficient condition when $q^{n}L(\mu\star\nu)\cong\soc \simppro{\mu}{\nu}$ for some integer $n$ in terms of the irreducible components of quiver Grassmannians. In other words, we give a proof for \cite[Conjecture 5.1]{LM2} in some special cases. Let me explain this more explicitly. \\ 

\noindent
First, let $\gamma\in\Qp$ and a partition $\gamma=\apb$, the set $\KP{\gamma}$ is identified with the set of $G_\gamma$-orbits in $E_\gamma$ in Dynkin cases. We define a \emph{partial ordering} on $\KP{\gamma}$ so that $\lambda'\leq\lambda$ if and only if $\OO_\lambda\subset \overline{\OO}_{\lambda'}$ where $\OO_\lambda$ refers to the $G_\gamma$-orbit corresponding to $\lambda$ in $E_\gamma$ and $\overline{\OO}_{\lambda'}$ refers to closure of the orbit $\OO_{\lambda'}$ (see \cite{CB} for more details). For a pair $(\munu)\in\KP{\alpha}\times\KP{\beta}$, one defines $(\mu',\nu')\leq(\munu)$ by $\mu'\leq \mu$ and $\nu'\leq \nu$. 

Let us define the \emph{quiver Grassmannian} of the representation $M_\lambda$ associated with $\lambda\in\KP{\gamma}$. Roughly speaking, it is a variety classifying the subrepresentations of $M_\lambda$ with dimension vector $\beta$. Namely,
\[\Gr_\beta(M_\lambda)\Seteq{W\in \Gr(\bdg)}{\text{ $M_\lambda(W)\subset W$ }}\]
where $\Gr(\bdg)=\prod_{i=1}^n\Gr(\beta_i,\gamma_i)$ is the product of Grassmannians of the vector space $\CC^{\gamma_i}$ with dimension $\beta_i$ for all $i\in[1,n]$. Here we write $\beta=\sum_{i\in I}\beta_i \alpha_i$ and $\gamma=\sum_{i\in I}\gamma_i\alpha_i$. It is well-known that $\Gr_\beta(M_\lambda)$ is a projective variety.\\ 

\noindent
 Let us consider the set of the irreducible components of $\Gr_\beta(M_\lambda)$: Following \cite[Lemma 2.4]{CFR}, we have 
\begin{equation}
  \Gr_\beta(M_\lambda)= \bigcup_{(\munu)\in\KP{\alpha}\times\KP{\beta}} \overline{\Gr(\munu,\lambda)}
\end{equation}
where $\Gr(\munu,\lambda)$ refers to the subvariety consisting of $W\in \Gr_\beta(M_\lambda)$ such that the restriction of $M_\lambda$ to $W$ is isomorphic to $M_\nu$, \emph{i.e.} $(M_\lambda)_{\mid W}\cong M_\nu$ and the restriction of $M_\lambda$ to the quotient space $V/W$ is isomorphic to $M_\mu$, namely, $(M_\lambda)_{\mid V/W}\cong M_\mu$, and $\overline{\Gr(\munu,\lambda)}$ refers to its Zarisky closure. 

In order to describe the set of irreducible components of $\Gr_\beta(M_\lambda)$, we introduce the notion of generic pairs.  Following \cite[Definition 7.3]{CFR1}, we call $(\munu)$ a \emph{generic pair} of $\lambda$ if there exists no $\nu'<\nu$ such that $(M_\lambda)_{W'}\cong M_{\nu'}$ and $(M_\lambda)_{V/W'}\cong M_{\mu}$ for some subspace $W'\in \Gr_\beta(M_\lambda)$ and there exists no $\mu'<\mu$ such that $(M_\lambda)_{W'}\cong M_{\nu}$ and $(M_\lambda)_{V/W'}\cong M_{\mu'}$ for some subspace $W''\in \Gr_\beta(M_\lambda)$. Thanks to Professor Cerulli Irelli and Reineke, we have the following Lemma
\begin{Lemma}{[\textbf{Cerulli Irelli and Reineke}]}
  Under the above assumption, the set of irreducible components of $\Gr_\beta(M_\lambda)$ is identified with the following set
    \begin{multline*}
\ext_{\adb}^{ger}(\lambda)= \left\{(\mu,\nu)\in \KP{\alpha}\times\KP{\beta}\,\middle|\,
\begin{minipage}{.5\linewidth}
 $(\munu)$ is a generic pair of $\lambda$ and\\ 
$\odim \homq{M_\nu}{M_\lambda}
=\odim\homq{M_\nu}{M_\nu}+\odim\homq{M_\nu}{M_\mu}$
\end{minipage}
\right\}
\end{multline*}
\end{Lemma}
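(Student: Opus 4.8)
The plan is to read the irreducible components off the stratification $\Gr_\beta(M_\lambda)=\bigcup_{(\munu)}\overline{\Gr(\munu,\lambda)}$ recalled above (\cite{CFR}): the components are exactly the maximal members of this finite family of closed subsets, so I must decide which stratum closures are maximal, check that those are irreducible, and compute their dimensions. Two ingredients will be used throughout: at a point $W$ of the stratum $\Gr(\munu,\lambda)$ one has the tangent space identification $T_W\Gr_\beta(M_\lambda)=\homq{M_\nu}{M_\mu}$, and each stratum admits a torsor presentation.

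First I would set up that presentation. Fix $M_\nu$ of dimension $\beta$ and let $\operatorname{Inj}$ be the variety of injective homomorphisms $M_\nu\to M_\lambda$, a dense open --- hence irreducible --- subvariety of the vector space $\homq{M_\nu}{M_\lambda}$, stratified by the isomorphism type of the cokernel. The assignment $\iota\mapsto\im\iota$ is an $\operatorname{Aut}(M_\nu)$-torsor from $\operatorname{Inj}$ onto $\{W\in\Gr_\beta(M_\lambda):W\cong M_\nu\}$ carrying the locally closed stratum $\operatorname{Inj}^\mu:=\{\iota:\coker\iota\cong M_\mu\}$ onto $\Gr(\munu,\lambda)$; dually, surjections $M_\lambda\twoheadrightarrow M_\mu$ give an $\operatorname{Aut}(M_\mu)$-torsor onto $\Gr(\munu,\lambda)$. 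Since the quiver is Dynkin the cokernel type takes finitely many values, so $\operatorname{Inj}$ has a dense open stratum, whose cokernel type $\mu^{\mathrm{gen}}$ is the unique $\leq$-minimal one occurring; dually one gets a generic kernel type $\nu^{\mathrm{gen}}$. Unwinding the definitions, $(\munu)$ is a \emph{generic pair} of $\lambda$ exactly when $\mu=\mu^{\mathrm{gen}}$ for embeddings $M_\nu\hookrightarrow M_\lambda$ and $\nu=\nu^{\mathrm{gen}}$ for surjections $M_\lambda\twoheadrightarrow M_\mu$; in that case $\operatorname{Inj}^\mu$ is dense open in $\operatorname{Inj}$, so $\Gr(\munu,\lambda)$ is irreducible and $\odim\Gr(\munu,\lambda)=\odim\homq{M_\nu}{M_\lambda}-\odim\End(M_\nu)$ (the torsor fibres are copies of $\operatorname{Aut}(M_\nu)$).

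Then I would prove the two implications. Sufficiency: if $(\munu)$ is a generic pair with $\odim\homq{M_\nu}{M_\lambda}=\odim\End(M_\nu)+\odim\homq{M_\nu}{M_\mu}$, then $\odim\Gr(\munu,\lambda)=\odim\homq{M_\nu}{M_\mu}$, so for every $W$ in the stratum $\odim\Gr(\munu,\lambda)\leq\odim_W\Gr_\beta(M_\lambda)\leq\odim T_W\Gr_\beta(M_\lambda)=\odim\homq{M_\nu}{M_\mu}=\odim\Gr(\munu,\lambda)$; hence $\Gr_\beta(M_\lambda)$ is smooth at $W$ with $\overline{\Gr(\munu,\lambda)}$ the unique component through it, so $\overline{\Gr(\munu,\lambda)}$ is a component. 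Necessity: let $C$ be a component, $C\subseteq\overline{\Gr(\munu,\lambda)}$. If $(\munu)$ is not generic, say $\mu\neq\mu^{\mathrm{gen}}$ (the kernel case dual), then $\operatorname{Inj}^\mu$ is disjoint from the dense open stratum $\operatorname{Inj}^{\mu^{\mathrm{gen}}}$, so in the irreducible variety $\overline{\Gr(\mu^{\mathrm{gen}},\nu,\lambda)}$ the locally closed sets $\Gr(\munu,\lambda)$ and $\Gr(\mu^{\mathrm{gen}},\nu,\lambda)$ are disjoint and both dense, hence both open, which is impossible unless $\overline{\Gr(\munu,\lambda)}\subsetneq\overline{\Gr(\mu^{\mathrm{gen}},\nu,\lambda)}$. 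Iterating --- each step strictly descends in the finite poset $\KP{\alpha}\times\KP{\beta}$ and strictly enlarges the stratum closure --- I reach a generic pair $(\mu',\nu')$ with $C\subseteq\overline{\Gr(\mu',\nu',\lambda)}$, and then $C=\overline{\Gr(\mu',\nu',\lambda)}$ by maximality (that closure is irreducible). Finally, taking $W$ a general point of $C$ --- in the dense stratum $\Gr(\mu',\nu',\lambda)$, off every other component, and a smooth point of $\Gr_\beta(M_\lambda)$ by generic smoothness --- I read off $\odim\homq{M_{\nu'}}{M_{\mu'}}=\odim T_W\Gr_\beta(M_\lambda)=\odim_W\Gr_\beta(M_\lambda)=\odim C=\odim\homq{M_{\nu'}}{M_\lambda}-\odim\End(M_{\nu'})$, which is exactly the $\Hom$-equality for $(\mu',\nu')$.

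The step I expect to be the real obstacle is the appeal to generic smoothness in the necessity direction: it requires $\Gr_\beta(M_\lambda)$, with the scheme structure for which $T_W=\homq{M_\nu}{M_\mu}$, to be generically reduced along each irreducible component, so that at a general point of $C$ the scheme-theoretic tangent space has dimension $\odim C$. This is the delicate point; the surrounding bookkeeping (termination of the iteration at a generic pair, and irreducibility of the terminal stratum so that maximality applies) is routine given the finiteness of $\KP{\alpha}\times\KP{\beta}$ and the torsor presentation, while the inequality $\odim\homq{M_\nu}{M_\mu}\geq\odim\homq{M_\nu}{M_\lambda}-\odim\End(M_\nu)$ with equality iff the $\Hom$-condition holds is immediate from the left-exact sequence $0\to\homq{M_\nu}{M_\nu}\to\homq{M_\nu}{M_\lambda}\to\homq{M_\nu}{M_\mu}$.
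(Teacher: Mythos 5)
Your proof is correct and follows essentially the same route the paper takes in its Lemma~\ref{lem_irrcompo}: compute $\odim\overline{S}_{M_\nu}=[M_\nu,M_\lambda]-[M_\nu,M_\nu]$ via the $\Aut{M_\nu}$-torsor on injections $M_\nu\hookrightarrow M_\lambda$, compare with the scheme-theoretic tangent space $\homq{M_\nu}{M_\mu}$, and read the component criterion off the resulting chain of (in)equalities; your iteration through the finite poset $\KP{\alpha}\times\KP{\beta}$ is a more explicit account of a step the paper presupposes by fixing $T$ to be the generic quotient from the outset. The generic-reducedness concern you flag at the end is the genuine delicate point, and it is shared by the paper: in the necessity direction one must know that a general smooth point of the reduced variety $\overline{S}_{M_{\nu'}}$ is a regular point of the \emph{scheme} $\Gr_\beta(M_\lambda)$, which is exactly what the paper outsources to \cite[Lemma 3.1]{Hu} (note the sufficiency direction needs no such input, since the collapsing chain $[M_\nu,M_\mu]\geq\odim_W\Gr_\beta(M_\lambda)\geq\odim\overline{S}_{M_\nu}$ already forces the local ring at $W$ to be regular, hence reduced). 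Cite Hubery's lemma at that point and your argument is complete.
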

Here we introduce the main notion of this paper.
\begin{Definition}
  We call a pair $(\munu)\in\KP{\alpha}\times\KP{\beta}$ a \emph{support pair} if for any $\lambda<\mu\oplus\nu$ we have $(\mu,\nu)\notin\ext_\adb^{ger}(\lambda)$. This notion is inspired by the support of the image of simple perverse sheaf $\IClam$ for $\lambda$ under the Restriction functor. 
\end{Definition}
\noindent
Here is the main result of this paper.
\begin{Theorem}
Let $Q$ be a Dynkin quiver and $L(\mu)$ and $L(\nu)$ be two simple modules over quiver Hecke algebra $R_Q$, where $\mu,\nu$ are their Kostant partitions respectively. If $L(\mu)\circ L(\nu)$ is a simple module, then $(\munu)$ is a support pair. In particular, for any non-trivial extension $\lambda\in\ext(\munu)$, then we have 
\[\odim\Hom_Q(M_\nu, M_\mu\oplus M_\nu)>\odim\Hom_Q(M_\nu,M_\lambda) \text{  if $(\munu)$ is a generic pair for $\lambda$,}\]
and 
\[\odim\Hom_Q(M_\mu,M_\mu\oplus M_\nu)>\odim\Hom_Q(M_\mu,M_\lambda) \text{  if $(\nu,\mu)$ is a generic pair for $\lambda$,}\]
\end{Theorem}
\begin{Remark}
This theorem reveals a relationship between representations of $Q$ and the Induction of two simple modules of $R_Q$. We see that the condition that $\simppro{\mu}{\nu}$ is a simple module is a quite subtle condition. It is difficult to describe this condition in general.\\ 
\end{Remark}

\noindent
Let us go back to Conjecture \ref{con_lec}. Based on \cite[Conjecture 5.1]{LM2}, we consider the generic extension $\mu*\nu$ of any pair $(\munu)\in\KP{\alpha}\times\KP{\beta}$. Namely, the module $M_{\mu*\nu}$ satisfies $M_{\mu*\nu}\in\extq{M_\mu}{M_\nu}$ and $\odim \extq{M_{\mu*\nu}}{M_{\mu*\nu}}$ is minimal with respect to all elements in $\extq{M_\mu}{M_\nu}$. We remark that $\mu*\nu$ always less than or equal to $\mu\star\nu$ in the sense of Lapid and Minguez, but $\mu*\nu=\mu\star\nu$ holds in most of the examples considered in \cite{LM2}. Therefore, the following theorem gives an answer to that conjecture in some special cases.
\begin{Theorem}
Under the above assumption, if $(\munu)\in\ext_{\adb}^{ger}(\mu*\nu)$, then $q^nL(\mu*\nu)$ is a submodule of $L(\mu)\circ L(\nu)$ for some integer $n\in\ZZ$.
\end{Theorem}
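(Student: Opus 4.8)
The plan is to argue geometrically, reusing the dictionary between $R_\gamma$-modules and Lusztig sheaves on the representation space that underlies the proof of the main theorem. Write $\gamma=\apb$. Via the theorem of Varagnolo--Vasserot the algebra $R_\gamma$ is the (graded) $\Ext$-algebra of the Lusztig sheaf on $E_\gamma$, and under the resulting correspondence the product $\simppro{\mu}{\nu}$ is governed by Lusztig's geometric induction $\Ind_\adb^\gamma\!\left(\IC(\mu)\boxtimes\IC(\nu)\right)$, which is the pushforward of (the pullback of) $\IC(\mu)\boxtimes\IC(\nu)$ along the proper map $\pi\colon\widetilde E\to E_\gamma$ whose fibre over a representation $M$ is the quiver Grassmannian $\Gr_\beta(M)$.

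The first step is to locate $L(\mu*\nu)$ as a composition factor. Applying the decomposition theorem to the proper map $\pi$ gives $\Ind_\adb^\gamma\!\left(\IC(\mu)\boxtimes\IC(\nu)\right)\cong\bigoplus_{\lambda,k}\IC(\lambda)[k]^{\oplus m_{\lambda,k}}$, and by Reineke's description of the generic extension the set of representations of dimension $\gamma$ admitting a subrepresentation in $\overline{\OO}_\nu$ with quotient in $\overline{\OO}_\mu$ is exactly $\overline{\OO}_{\mu*\nu}$; hence this complex is supported on $\overline{\OO}_{\mu*\nu}$, so every composition factor $L(\kappa)$ of $\simppro{\mu}{\nu}$ satisfies $\kappa\geq\mu*\nu$ and $\mu*\nu$ is the minimal such $\kappa$. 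Because $\OO_{\mu*\nu}$ is open and dense in this support, the multiplicity of $\IC(\mu*\nu)$, hence the graded multiplicity of $L(\mu*\nu)$ in $\simppro{\mu}{\nu}$, is read off from the stalk at a point of $\OO_{\mu*\nu}$. This is where the hypothesis enters: by the Lemma of Cerulli Irelli and Reineke, $(\munu)\in\ext_\adb^{ger}(\mu*\nu)$ says precisely that $\overline{\Gr(\munu,\mu*\nu)}$ is an irreducible component of $\Gr_\beta(M_{\mu*\nu})$, and together with the accompanying equality $\odim\Hom_Q(M_\nu,M_{\mu*\nu})=\odim\Hom_Q(M_\nu,M_\nu)+\odim\Hom_Q(M_\nu,M_\mu)$ this forces the relevant generic stalk to be concentrated in a single cohomological degree where it is one-dimensional; thus $L(\mu*\nu)$ occurs in $\simppro{\mu}{\nu}$ with graded multiplicity the single monomial $q^{\,n}$ for one integer $n$.

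The second step is to promote this composition factor to a submodule, i.e.\ to produce a nonzero map $q^{\,n}L(\mu*\nu)\to\simppro{\mu}{\nu}$. Using the KLR duality $\left(\simppro{\mu}{\nu}\right)^{\star}\cong q^{c}\,L(\nu)\circ L(\mu)$ and self-duality of simple modules, this is equivalent to $L(\mu*\nu)$ lying in the head of $L(\nu)\circ L(\mu)$, and by Frobenius reciprocity ($\Ind\dashv\Res$) to the assertion that $L(\nu)\boxtimes L(\mu)$ lies in the head of $\Res_{\beta,\alpha}L(\mu*\nu)$. On the geometric side $\Res_{\beta,\alpha}$ corresponds to restriction of sheaves, and $\Res_{\beta,\alpha}\IC(\mu*\nu)$ carries the constant sheaf on the stratum $\OO_\nu\times\OO_\mu$; that this stratum is non-empty is exactly the statement that $M_{\mu*\nu}$ is an extension of $M_\mu$ by $M_\nu$, which holds by the very definition of the generic extension $\mu*\nu$. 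The same $\Hom$-dimension equality from $\ext_\adb^{ger}(\mu*\nu)$ then pins down the cohomological degrees so that the corresponding map $\Res_{\beta,\alpha}L(\mu*\nu)\to L(\nu)\boxtimes L(\mu)$ is nonzero, which yields the desired embedding.

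I expect the second step to be the main obstacle. The dictionary between Lusztig sheaves and $R_\gamma$-modules is not an equivalence, so a direct-summand decomposition of complexes does not by itself descend to a decomposition of modules; concretely one has to rule out that, in the specific grading carried by $\simppro{\mu}{\nu}$, the factor $q^{\,n}L(\mu*\nu)$ sits strictly above some $L(\kappa)$ with $\kappa>\mu*\nu$, i.e.\ that the graded group $\Ext^{1}_{R_\gamma}\!\left(L(\mu*\nu),L(\kappa)\right)$ does not contribute in the offending degree. Checking this degree incompatibility is where the genericity of $\mu*\nu$ is essential: the fact that $\IC(\mu*\nu)$ restricts to a shifted constant sheaf on its open orbit---equivalently, the $\Hom$-dimension equality built into $\ext_\adb^{ger}(\mu*\nu)$---is exactly what makes the degree bookkeeping close and forces $q^{\,n}L(\mu*\nu)$ into the socle.
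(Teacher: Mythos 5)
Your overall strategy is in the right spirit (geometric dictionary, locate the composition factor, then force it into the socle), and you correctly suspect where the argument will get stuck, but there are two genuine gaps.

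First, in Step 1 the multiplicity of $L(\lambda)$ in $\simppro{\mu}{\nu}$ is not read off from a stalk of the induction complex. In the paper's bookkeeping (Definition \ref{def:productresfinitemodule}, Lemma \ref{lem:multiplicitycomputer}, Theorem \ref{them:multiplicitycirc}) one has $[\simppro{\mu}{\nu}:L(\lambda)]=K_\lambda(\munu)$, the coefficient of $\ICmu\boxtimes\ICnu$ in the \emph{restriction} $\Res_{\adb}^\gamma\IClam$, while the induction $\IC(\mu)\star\IC(\nu)$ controls the dual quantity $J_\lambda(\munu)=[\Res_{\adb}L(\lambda):L(\mu)\boxtimes L(\nu)]$ (Proposition \ref{pro:multiplicityRes}). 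So the support argument you run for $\Ind$ constrains $J$, not $K$, and must be transported across the duality before it says anything about composition factors of $\simppro{\mu}{\nu}$. Moreover, your claim that the $\Hom$-dimension equality in $\ext_\adb^{ger}$ forces the generic stalk to be one-dimensional in a single degree is not correct: by Theorem \ref{pro_topdegree}, when $(\munu)\in\ext_\adb^{ger}(\lambda)$ one gets
\[
K_\lambda(\munu)=q^{\epsilon(v_1,\alpha,v_2,\beta)}\Po{\Gr_{v_1-v_\mu}(CK(M_\mu))\times\Gr_{v_2-v_\nu}(CK(M_\nu))},
\]
which is a Poincar\'e polynomial of a product of quiver Grassmannians of $CK$-modules, generically a genuine polynomial rather than a monomial $q^n$.

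Second, as you anticipate, Step 2 is where the argument needs a concrete module-theoretic input that the sheaf-theoretic decomposition does not supply. The paper closes this gap with McNamara's \cite[Lemma 7.5]{McN3}: among the composition factors $L(\kappa)$ of $\simppro{\mu}{\nu}$, the socle is the one whose multiplicity polynomial $K_\kappa(\munu)$ attains the strictly largest top $q$-degree. With that in hand, the proof is a degree comparison. By Proposition \ref{lem:dimfiberkappa}, $e_\lambda(\munu)$ is maximal precisely at $\lambda=\mu*\nu$, because $\cExt{\mu*\nu}{\mu}{\nu}=\Hom_\Omega(U,W)$ is the whole affine space; the hypothesis $(\munu)\in\ext_\adb^{ger}(\mu*\nu)$ is what guarantees $K_{\mu*\nu}(\munu)\neq 0$ and that the bound $2e_{\mu*\nu}(\munu)+d_{\mu*\nu}(\munu)$ of Theorem \ref{pro_topdegree} is attained. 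For any $\lambda'>\mu*\nu$ the top degree of $K_{\lambda'}(\munu)$ is bounded by $2e_{\lambda'}(\munu)+d_{\lambda'}(\munu)$, which is strictly smaller. Combined with Corollary \ref{cor:head} (which places the Kostant partition $\mu\triangle\nu$ of the socle in $\ext(\munu)$), McNamara's lemma then forces $\mu\triangle\nu=\mu*\nu$. Without Lemma 7.5, your Frobenius-reciprocity reduction to the head of $\Res_{\beta,\alpha}L(\mu*\nu)$ does not by itself close, since it still needs a mechanism that turns the graded multiplicity information into a statement about where $L(\mu*\nu)$ sits inside $\simppro{\mu}{\nu}$.
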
 
\begin{Remark}
The difficulty is to connect the representations of $Q$ and modules of $\Lambda_Q$. After obtaining this connection, it is possible to give an answer to that conjecture. Although this theorem is not good enough to prove \cite[Conjecture 5.1]{LM2}, this is an attempt to prove this conjecture.\\ 
\end{Remark}

\noindent
\textbf{The strategy used in this paper}
\begin{enumerate}
  \item  Firstly, we transform the product $\simppro{\lambda}{\mu}$ into the Induction functor on the indecomposable projective modules over $R_Q$: $\ind_{\adb}^{\apb}\PP_\nu$ via the bilinear paring~\cite[Section 2.5]{KL09}.
  \item  Second, we transform $\ind_{\adb}^{\apb}\PP_\nu$ into the Induction functor on simple perverse sheaves on representation space: $\Res_{\adb}^{\apb}\ICnu$ via the results in~\cite{VV} and ~\cite{McN}.
  \item  Thirdly, we study the geometric properties of  $\Res_{\adb}^{\apb}\ICnu$ given in~\cite{Sch}, ~\cite{Lu} and~\cite{Lubook} using the graded quiver varieties as in ~\cite{LP} and ~\cite{VV1}
  \item  At last we connect the  $\Res_{\adb}^{\apb}\ICnu$ with quiver Grassmannians given in~\cite{Rei} and ~\cite{CFR}.\\ 
\end{enumerate}

\noindent
\textbf{Acknowledgements}
The author is grateful to Professor Qin Fan for many helpful discussions. He would like to thank Professor Bernard Leclerc and Professor Erez Lapid for useful comments and suggestions; He really appreciates Professor Markus Reineke for his instruction and his proof for the set of irreducible components of quiver Grassmannians; He really thanks Bernard Keller for comments on the graded quiver varieties. He would like to thank Professor Cerulli Irelli for his help and guidance during this work. His work was finished during his invitation to the University of Sapienza. The author is supported by the China Scholarships Council. NO.202006040123.

\section{Premise}\label{sec:premise}
In this section, we recall some basic facts on quiver representation theory for Dynkin quivers.
\subsection{Notations}\label{sec:notation}
In this paper, a quiver $Q=(I,\Omega)$ consists of a set of vertices $I$ and a set of arrows $\Omega$. Denote by $n$ the number of vertices if there is no danger of confusion. For an arrow $h$, one denotes its source and target by $s(h)$ and $t(h)$, respectively. For our purposes, we often assume that $Q$ is of Dynkin type. We assume that the vertex set $I$ is endowed with an order so that $k<l$ if there exists an arrow from $i_k$ to $i_l$. 

Let $C_Q$ be the Cartan matrix for $Q$, simply write $C$. 
We write $\alpha_i$ for its simple roots and $Q^+=\ZZ[\alpha_i]_{i\in I}$ for the monoid generated by $\alpha_i$ for all $i\in I$. The matrix $C$ induces a bilinear form on $Q$, which is denoted by $(-,-)$. Set another bilinear form on $\Qp$ by
\begin{equation}
  \lara{\alpha}{\beta}=\sum_{i\in I} \alpha_i\beta_i-\sum_{h\in \Omega} \alpha_{s(h)}\beta_{t(h)}
\end{equation}
It is well known that $(\adb)=\lara{\alpha}{\beta}+\lara{\beta}{\alpha}$. We write $W$ for its Weyl group generated by simple reflections $s_i$ associated with simple roots $\alpha_i$ for any $i\in I$. For any $w\in W$, one denotes by $l(w)$ the length of $w$. $R^+$ denotes the set of positive roots. The ground field in this paper is the complex number field $\CC$, we sometimes write $k$ for simplicity.  

Let $w_0\in W$ be the longest element in $W$. For a reduced expression of $w_0=\squgi{s}{i_m}$ which is source-adapted with $\Omega$, we obtain the positive root set $R^+$ by  
\begin{equation}\label{eq:roots}
\alpha_{i_1},\  s_{i_1}(\alpha_{i_2}),\ \cdots,\  \squg{s}{i_{k-1}}(\alpha_{i_k}),\ \cdots, \  \squg{s}{i_{m-1}}(\alpha_{i_m}).
\end{equation}
Let us denote them by $\beta_k=\squg{s}{i_{k-1}}(\alpha_{i_k})$ for $1\leq k\leq m$ and give them an order so that $\beta_l<\beta_k$ if and only if $l<k$. In other words, a reduced expression yields an ordering on $R^+$. \\

\subsection{Representations of Dynkin quivers}
Let $M=(V,x)$ be a representation of $Q$ where $V$ is a $I$-graded vector space and $x=(x_h)_{h\in \Omega}$ is a tuple of matrices $x_h:\, V_{s(h)}\to V_{t(h)}$. We sometimes write the matrices $x$ for the representation $M$ if there is no danger of confusion. For a representation $M=(V,x)$, one denote by $\undd M$ its dimension vector $\sum_{i\in I}\odim(V_{i}) \alpha_i\in \Qp$. Given two representations $M,N$ of $Q$, we denote by $\Hom_{Q}(M,N)$ the vector space of $Q$-morphisms between $M$ and $N$ and write $[M,N]$ (resp. $[M,N]^1$) for $\odim\Hom_{Q}(M,N)$ (resp. $\odim\Ext^1_{Q}(M,N)$). One has  
\begin{align}\label{prop_eulerform}
  \langle \undd M,\undd N\rangle=[M,N]-[M,N]^1.
\end{align}
For any vertex $i\in I$, we associate the indecomposable projective (resp. injective) representation $P_i$ (resp. $I_i$). 

Denote by $M_\beta$ the indecomposable module corresponding to the root $\beta$. If the order of positive roots is given as before, it follows
\begin{align}\label{eq_homext0}
      \Hom_{Q}(M_{\beta_a},M_{\beta_b})=\Ext_{Q}(M_{\beta_b},M_{\beta_a})=0 \qquad \text{for $\beta_a<\beta_b$}.
\end{align}
\noindent
Let $\alpha\in\Qp$ and $V$ be a $I$-graded space such that $\undd V=\alpha$, one denotes by $G_\alpha$ the group $\prod_{i\in I}\GL(V_i)$ and by $E_\alpha$ the representation space $\oplus_\inda\Hom_k(V_{s(h)},V_{t(h)})$ endowed with a $G_\alpha$ action by $g\cdot x=(g_{t(h)}x_h g_{s(h)}^{-1})_\inda$. For two $I$-graded vector spaces $V,W$, we denote by $\Hom_I(W,V)$ the space $\oplus_{i\in I}\Hom_k(W_i,V_i)$ and by $\Hom_\Omega(W,V)$ the space $\oplus_{h\in\Omega}\Hom_k(W_{s(h)},V_{t(h)})$.\\

\subsection{Kostant Partitions}\label{sec_Kostant}

Consider the set of the $G_\alpha$-orbits in $E_\alpha$, which is denoted by $\KP{\alpha}$. For an element $\lambda\in \KP{\alpha}$, we denote by $\OO_\lambda$ the $G_\alpha$-orbit corresponding to $\lambda$, and write $d_\lambda$ for the dimension of the orbit $\OO_\lambda$.

  One defines an ordering on $\KP{\alpha}$ so that $\lambda'\leq \lambda$ if $\OO_\lambda\subset\overline{\OO}_{\lambda'}$. Let us recall some properties of $\KP{\alpha}$. In Dykin cases, by Gabriel's Theorem, one can write any element $\lambda$ in $\KP{\alpha}$ as 
  \begin{equation}\label{eq_Kostantpa}
\lambda=\squ{\lambda}{s}    
  \end{equation}
where $\lambda_k\in R^+$ for any $k\in[1,s]$ such that $\lambda_1\geq \lambda_2\geq \cdots \geq \lambda_s$ and $\sum_{i=1}^s\lambda_i=\alpha$. We call $\lambda$ as in \refb{eq_Kostantpa} a \emph{Kostant partition} of $\alpha$. Denote by $M_\lambda$ the module corresponding to the Kostant partition $\lambda\in\KP{\alpha}$.

\begin{Example}\label{exa_A}
Consider the type $A_n$, and fix an orientation of $A_n$ as follows: 
\begin{equation}
1\longrightarrow 2\longrightarrow\cdots\longrightarrow n
\end{equation}
The set $R^+$ of positive roots coincides with the set of segments $[a,b]$ such that $a\leq b$. The ordering of $R^+$ is given by
$$ [i,j]<[k,l]\ \text{if $i<k$ or $i=k$ and $j<l$}$$
Any indecomposable representations of $A_n$ are of the form $M[a,b]$ such that $$\undd{} M[a,b]=\sum\limits_{a\leq i\leq b}\alpha_i.$$
\end{Example}

\subsection{Extension of Kostant partitions}
In this section, we study the extension groups of representations of $Q$ in terms of Kostant partitions. 
Let $\adb,\gamma\in\Qp$ such that $\apb=\gamma$, let $\mu\in \KP{\alpha}$ and $\nu\in\KP{\beta}$. We write $\ext(\munu)$ for the subset of $\KP{\gamma}$ consisting of Kostant partitions $\lambda$ such that 
\begin{equation}\label{eq_shortexa}
    0\to M_\nu\to M_\lambda\to M_\mu\to 0
\end{equation}
In other words, $\ext(\munu)$ is the Kostant partitions whose modules in $\extq{M_\mu}{M_\nu}$.

Similarly, for a Kostant partition $\lambda$, we define $\ext_{\adb}(\lambda)$ as the subset of $\KP{\alpha}\times\KP{\beta}$ consisting of pairs $(\munu)$ satisfies condition \refb{eq_shortexa}. 

\begin{Definition}\label{def:KPlambda}
For a $\lambda\in\KP{\gamma}$ and a partition $\gamma=\apb$, we define $\ext_{\lambda}^{\mins}(\adb)$ by
\begin{multline*}
\qquad\quad \ext_{\lambda}^{\mins}(\adb):=\left\{(\mu,\nu)\in \ext_{\adb}(\lambda)\,\middle|\,
\begin{minipage}{.5\linewidth}
there is no pair $(\mu',\nu')\in\ext_{\adb}(\lambda)$
satisfying $(\mu',\nu')<(\mu,\nu)$
\end{minipage}
\right\}
\end{multline*}
Here by $(\mu',\nu')<(\munu)$, we mean the condition $(\mu',\nu')\leq(\munu)$ and $(\mu',\nu')\neq(\munu)$.
\end{Definition}
\begin{Example}
Given a Kostant partition $\lambda\in\KP{\gamma}$. If the representation $M_\lambda$ admits a decomposition $M_\lambda=M_\mu\oplus M_\nu$ such that $\mu\in\KP{\alpha}$ and $\nu\in\KP{\beta}$, then it is easy to see that
  \[(\munu)\in \ext_{\adb}^{\mins}(\lambda)\] 
    However, the elements in $\ext_{\adb}^{\mins}(\lambda)$ don't satisfy $M_\lambda=M_\mu\oplus M_\nu$ in general. For example: Consider the case $A_3$, let $\lambda=[1,3]+[2,2]$ and $\alpha=\alpha_1+\alpha_2$ and $\beta=\alpha_2+\alpha_3$. It is easy to see that $\ext_{\adb}^{\mins}(\lambda)=\{([1,2],[2,3])\}$.
\end{Example}

\section{Hall maps for Representation spaces}
We will next review the Hall maps for representation spaces (see \cite{Sch} for more details). Fix $\alpha,\beta=\sum_{i\in I}m_i\alpha_i\in\Qp$ and let $\gamma=\apb=\sum_\indv n_i\alpha_i$. Given two $I$-graded vector spaces $W\subset V$ such that $\undd V=\gamma$ and $\undd W=\beta$, we define $\Gr(\beta,\gamma)=\prod_\indv \Gr(m_i,n_i)$ where $\Gr(m_i,n_i)$ are the Grassmannians of $n_i$ for $m_i$. There are two descriptions for it: First, we consider it as 
\begin{equation}\label{eq_grassmannian}
  \Gr(\beta,\gamma)\Seteq{W'\subset V}{\undd W'=\beta}
\end{equation}
Secondly, we interpret it as a quotient variety. Set 
\begin{equation}
  \Hom_I(W,V)^0\Seteq{f=(f_i)_\indv\in\Hom_I(W,V)}{\text{ $f$ is injective.}}
\end{equation}
which is an open subset of $\Hom_I(W,V)$. In other words, $f_i\in Mat_{n_i\times m_i}$ with rank $m_i$ for each $\indv$. The $G_\beta$ acts on $\Hom_I(W,V)$ by $g\cdot f=(f_ig_i^{-1})$. There is a canonical map.
\begin{equation}\label{eq:grassman}
 \begin{split}
 \Pi:\, \Hom_I(W,V)^0&\to \Gr(\beta,\gamma)\\
    f&\mapsto f(W)
 \end{split}
 \end{equation} 
 It is easy to see that $\Hom_I(W,V)^0$ is a $G_\beta$-torsor of $\Gr(\beta,\gamma)$. In other words, $\Pi^{-1}(W')\cong G_\beta$ for any $W'\in \Gr(\beta,\gamma)$.

\subsection{Induction maps}
Let us define the variety $E_{\beta,\gamma}$ by 
\[E_{\beta,\gamma}\Seteq{(x,W')\in E_\gamma\times \Gr(\beta,\gamma)}{x(W')\subset W'}.\]
Fix a subspace $W$ of $V$, one defines 
 \begin{equation}\label{eq:Ebdg'}
   E_\bdg'\Seteq{(x,y,f)\in E_\gamma\times E_\beta\times\Hom_I(W,V)^0}{xf-fy=0}
 \end{equation}
 We have a map $\varphi: E_\bdg'\to E_\bdg$ which maps $(x,y,f)$ to $(x,f(W))$. 
 \begin{Lemma}
     The variety $E_\bdg '$ is a $G_\beta$-bundle over $E_\bdg$.
 \end{Lemma}
\begin{proof}
    We first show that $\varphi$ is surjective. Let us write $W'=f(W)$ for some element $f$ of $\Hom_I(W,V)^0$ via \refb{eq:grassman}. The relation $xf(W)\subset f(W)$ induces a unique $y\in E_\beta$ such that $xf=fy$, as $f$ is injective. More precisely, for each arrow $h:i\to j$, let us consider the following diagram
\begin{equation}
  \xymatrix{
  &0\ar[r] & W_i \ar@{.>}[d]^{y_h} \ar[r]^{f_i} & V_i\ar[d]^{x_h}\\ 
  &0\ar[r] & W_j \ar[r]^{f_j}   & V_j
  }
\end{equation}
Each matrix $x_h$ gives rise to a unique matrix $y_h$ such that $x_h f_i=f_jy_h$. Hence, we obtain that $\varphi$ is surjective.

Meanwhile, one can define $G_\beta$-action on $E_\bdg'$ by $g\cdot(x,y,f)=(x,g\cdot y,g\cdot f)$. By the second definition of quiver Grassmannians, it follows that $E_\bdg '$ is a $G_\beta$-torsor of $E_\bdg$ under this action.
\end{proof} 

\noindent
 Let us consider the canonical map 
\begin{equation}\label{eq_qgrassmann}
  \begin{split}
    q: E_\bdg&\to E_\gamma \\ 
       (x,W')&\mapsto x
  \end{split}
\end{equation}
 and 
\begin{equation}
  \begin{split}
    q':\, E_\bdg&\to \Gr(\bdg) \\ 
       (x,W')&\mapsto W'
  \end{split}
\end{equation}
 The map $q$ is a projective map and $q'$ is a vector bundle with fiber 
 $$\mathsf{P}_\Omega(W,V)=\oplus_\inda \mathsf{P}_{m_{s(h)},n_{t(h)}}$$
  where $\mathsf{P}_{m_{s(h)},n_{t(h)}}$ is the subspace of $\Hom_k(V_{s(h)},V_{t(h)})$ with the form 
$
\begin{pmatrix}
 * & *\\
 0 & *
\end{pmatrix}
$ where the block of $0$ is in $Mat_{(n-m)_{t(h)}\times m_{s(h)}}$.  We see that $E_\bdg$ is a smooth variety, and then the fiber of $q$ at $M\in E_\gamma$ is a projective variety.

\begin{Definition}\label{def_quivergrass}
Consider the map \refb{eq_qgrassmann} $q: E_\bdg\to E_\gamma$. For a representation $M\in E_\gamma$, one calls its fiber $q^{-1}(M)$ the \emph{quiver Grassmannian} of $M$ with dimension $\beta$, and denotes it by $\Gr_\beta(M)$. 
\end{Definition}

Next, one defines 
$$E_\bdg^{(1)}=E_{\bdg}\times G_\beta\times G_\alpha$$
We will give another form of this variety. Set 
    \[\cExt{\gamma}{\alpha}{\beta}\Seteq{(x,y,z,f,u)\in E_\bdg'\times\Hom_\Omega(V/W,V/W)\times\Hom_I(V,V/W)}{0\to y\stackrel{f}{\rightarrow}x\stackrel{u}{\rightarrow}z\to 0}\]
where the sequence $0\to y\stackrel{f}{\rightarrow}x\stackrel{u}{\rightarrow}z\to 0$ is exact.

\begin{Lemma}
    We have an isomorphism: $\cExt{\gamma}{\alpha}{\beta}\cong E_\bdg^{(1)}$. 
\end{Lemma}
\begin{proof}
    To prove this, we define the following map by 
\begin{equation}
\begin{split}
    f: \cExt{\gamma}{\alpha}{\beta}&\to E'_\bdg\\ 
     (x,y,z,f,u)&\mapsto (x, y,f)
\end{split}
\end{equation}
We will show that this map is a $G_\alpha$-torsor over $E'_\bdg$. For an element $(x,y,f)\in E'_\bdg$ and an arrow $h:i\to j$, let us consider the following diagram
\begin{equation}\label{eq_zyu}
\xymatrix{
 & 0\ar[r] &W_i\ar[r]^{f_i} \ar[d]^{y_h} &V_i\ar[r]^{u_i} \ar[d]^{x_h}&V_i/f_iW_i\ar@{.>}[d]^{z_h} \ar[r] &0\\
 & 0\ar[r] &W_j\ar[r]^{f_j} &V_j\ar[r]^{u_j} &V_j/f_j W_j \ar[r] &0
}
\end{equation} 
where $u_i,u_j$ are the cokernel of $f_i,f_j$, respectively, and $z_h$ is the unique map induced by $y_h,x_h$. It follows that any element $(x,y,f)\in E_\bdg'$ gives rise to a unique map $(u,z)\in\Hom_I(V,V/f(W))\times \Hom_\Omega(V/f(W),V/f(W))$. 

 Let us take any element $(x,y,z',f,u')\in \cExt{\gamma}{\alpha}{\beta}$, the element $(x,y,f)\in E_\bdg'$ gives rise to a unique tuple $(x,y,z,f,u)$ as before. Since $u$ is the cokernel of $(x,y,f)$, by the above diagram we have that there exists a unique $g\in G_\alpha\subset \Hom_I(V/f(W), V/W)$ such that $u'=gu$. It implies that $z'=g\cdot z$. Therefore, the variety $\cExt{\gamma}{\alpha}{\beta}$ is a $G_\alpha\times G_\beta$-torsor over $E_{\beta,\gamma}$. 

 Let us consider $E_{\bdg}^{(1)}$: For any element $(x, W',\rho_\beta,\rho_\alpha)$, it is easy to see from \refb{eq:Ebdg'} that $(W',\rho_\beta)$ gives rise to a unique $f\in\Hom_I(W,V)^0$ such that $f(W)=W'$ and $y\in E_\beta$ such that $(x,y,f)\in E_{\bdg}'$. 
 The element $\rho_\alpha$ and $(x,y,f)$ yield a unique $u'=\rho_\alpha u$ as in \refb{eq_zyu}. The induced pair $(f,u')$ induces a short exact sequence 
\[0\to y\stackrel{f}{\rightarrow}x\stackrel{u'}{\rightarrow}z\to 0\]
Therefore, we obtain an isomorphic map  
\[\cExt{\gamma}{\alpha}{\beta}\cong E_\bdg^{(1)}\]
Here we remark that we just check the isomorphism between the closed points of $E_{\bdg}^{(1)}$ and that of $\cExt{\gamma}{\alpha}{\beta}$. It is easy to construct an isomorphic map between $\cExt{\gamma}{\alpha}{\beta}\cong E_\bdg^{(1)}$ using $(f,u')$ and \refb{eq:Ebdg'}.\\ 
\end{proof}

\noindent
There is a map $p:\, E_\bdg^{(1)}\to E_\beta\times E_\alpha$ that sends $(x,y,z,f,u)$ to $(y,z)$. Hence, we obtain 
\begin{equation}\label{dia_induction}
  \begin{aligned}
    E_\beta\times E_\alpha\xleftarrow{\quad p\quad } &E_\bdg^{(1)}\xrightarrow{\quad r\quad } E_\bdg \xrightarrow{\quad q\quad } E_\gamma\\ 
     (y,z)\mapsfrom  (x,y&,z,f,u)\mapsto  (x,f(W)) \mapsto  x 
 \end{aligned}
\end{equation}

\begin{Lemma}\label{lem:oribitsimage}
For two orbits $\OO_\mu$ and $\OO_\nu$ in $E_\alpha$ and $E_\beta$, respectively, we have
\[q rp^{-1}(\OO_\mu\times \OO_\nu)=\bigcup_{\lambda\in\ext(\munu)}\OO_\lambda\] 
\end{Lemma}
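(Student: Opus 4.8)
The plan is to unwind the diagram \refb{dia_induction} through the identification $E_\bdg^{(1)}\cong\cExt{\gamma}{\alpha}{\beta}$ established above. By \refb{dia_induction} the composite $qr$ is the projection $(x,y,z,f,u)\mapsto x$ and $p$ is $(x,y,z,f,u)\mapsto(y,z)$, so
\[ qrp^{-1}(\OO_\mu\times\OO_\nu)=\Set{x\in E_\gamma}{\exists\,(x,y,z,f,u)\in\cExt{\gamma}{\alpha}{\beta}\ \text{with}\ y\in\OO_\nu,\ z\in\OO_\mu}, \]
where, to match $p$, the symbol $\OO_\mu\times\OO_\nu$ is read inside $E_\beta\times E_\alpha$ as the locus whose first coordinate lies in $\OO_\nu$ and whose second lies in $\OO_\mu$. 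Thus the statement reduces to the tautology that a point of $\cExt{\gamma}{\alpha}{\beta}$ is exactly a short exact sequence of $Q$-representations, together with the definition of $\ext(\mu,\nu)$ and the coincidence of $G_\bullet$-orbits with isomorphism classes of $Q$-representations.

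For the inclusion $\subseteq$, take $x$ in the right-hand side above; the defining condition of $\cExt{\gamma}{\alpha}{\beta}$ furnishes a short exact sequence of $Q$-representations $0\to y\xrightarrow{f}x\xrightarrow{u}z\to 0$, and since $y\in\OO_\nu$ and $z\in\OO_\mu$ force $M_y\cong M_\nu$ and $M_z\cong M_\mu$, we get $M_x\in\extq{M_\mu}{M_\nu}$; if $\lambda$ is the Konstant partition with $x\in\OO_\lambda$, then $\lambda\in\ext(\mu,\nu)$, so $x\in\bigcup_{\lambda\in\ext(\mu,\nu)}\OO_\lambda$. For the inclusion $\supseteq$, let $\lambda\in\ext(\mu,\nu)$ and $x\in\OO_\lambda$, so there is a short exact sequence $0\to N\xrightarrow{\iota}M_x\to M'\to 0$ of $Q$-representations with $N\cong M_\nu$ and $M'\cong M_\mu$. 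Pick an $I$-graded isomorphism from $W$ onto the $x$-stable subspace $\iota(N)\subset V$ and compose it with the inclusion $\iota(N)\hookrightarrow V$ to get $f\in\Hom_I(W,V)^0$; there is then a unique $y\in E_\beta$ with $xf-fy=0$, and $M_y\cong N\cong M_\nu$, so $(x,y,f)\in E_\bdg'$ with $y\in\OO_\nu$. Forming the cokernel as in \refb{eq_zyu} produces $(u,z)$ with $M_z\cong\coker\iota\cong M'\cong M_\mu$, so $z\in\OO_\mu$. Hence $(x,y,z,f,u)\in\cExt{\gamma}{\alpha}{\beta}$ lies over $\OO_\nu\times\OO_\mu$, whence $x\in qrp^{-1}(\OO_\mu\times\OO_\nu)$.

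I expect the only delicate point to be the bookkeeping that translates faithfully among the three incarnations of ``$M_x$ is an extension'': a point of $\cExt{\gamma}{\alpha}{\beta}$ lying over $\OO_\nu\times\OO_\mu$; an abstract short exact sequence $0\to M_\nu\to M_x\to M_\mu\to 0$; and $M_x\cong M_\lambda$ with $\lambda\in\ext(\mu,\nu)$. The equivalence of the first two uses the chain of torsor identifications recalled above — $\Hom_I(W,V)^0\to\Gr(\beta,\gamma)$ a $G_\beta$-torsor, $E_\bdg'\to E_\bdg$ a $G_\beta$-torsor, and $\cExt{\gamma}{\alpha}{\beta}\to E_\bdg$ a $G_\alpha\times G_\beta$-torsor — which guarantee that every subrepresentation of $M_x$ of dimension vector $\beta$ is realized by some $f\in\Hom_I(W,V)^0$ and that the resulting $y$ and $z$ recover, up to isomorphism, that subrepresentation and the corresponding quotient; the second equivalence is just the definition of $\ext(\mu,\nu)$. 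Finally, the orientation of $\Ext^1_Q$ (subrepresentation $M_\nu$, quotient $M_\mu$) is pinned down by which factor of $E_\beta\times E_\alpha$ the map $p$ records, which matches the convention used to define $\ext(\mu,\nu)$; the remaining checks are routine.
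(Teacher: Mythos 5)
Your proposal is correct and takes essentially the same approach as the paper: both unwind the diagram through the identification $E_\bdg^{(1)}\cong\cExt{\gamma}{\alpha}{\beta}$ and observe that $x\in qrp^{-1}(\OO_\mu\times\OO_\nu)$ iff there is a short exact sequence $0\to M_\nu\to M_x\to M_\mu\to 0$, which by definition means $\lambda\in\ext(\mu,\nu)$ for $x\in\OO_\lambda$. Your write-up is more explicit about the $\supseteq$ direction (constructing a witness point of $\cExt{\gamma}{\alpha}{\beta}$ from an abstract extension via the torsor identifications) and about the silent swap in the factors of $E_\beta\times E_\alpha$, but the argument is the one the paper uses.
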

\begin{proof}
 Let us consider the subset $p^{-1}(\OO_\mu\times \OO_\nu)$. By definition $\cExt{\gamma}{\alpha}{\beta}$, one has $(x,y,z,f,u)\in p^{-1}(\OO_\mu\times \OO_\nu)$ if and only if $y\in\OO_\nu$, $z\in\OO_\mu$ and there exists a short exact sequence
 \[0\to y\stackrel{f}{\rightarrow} x\stackrel{u}{\rightarrow}z\to 0\]
 Since $qr(x,y,z,f,u)=x$, it follows that $x\in q rp^{-1}(\OO_\mu\times \OO_\nu)$ if and only if there exists a short exact sequence as above. That means $x\in \extq{M_\mu}{M_\nu}$. Therefore, this leads to our conclusion.
\end{proof}

Let us consider the following fibre product diagram
\begin{equation}\label{eq_diagrmunul}
\begin{tikzcd}
  \Gr(\munu,M_\lambda) \arrow[d, "i'"] \arrow[r,"q'"] 
   &M_\lambda \arrow[d, "i"]\\
   rp^{-1}(\OO_\mu\times \OO_\nu) \arrow[r, "q"]
    &E_\gamma
\end{tikzcd}
\end{equation} 
where $M_\lambda$ is a point in $E_\gamma$, $i$ is its embedding map, and $\Gr(\munu,M_\lambda)$ is the $q^{-1}(M_\lambda)\cap rp^{-1}(\OO_\mu\times \OO_\nu)$. It follows from equation \refb{dia_induction} that
\begin{equation}\label{eq_grmunulam}
  \Gr(\munu,M_\lambda)\Seteq{W'\in \Gr_\beta(M_\lambda)}{(M_{\lambda})_{W'}\cong M_\nu;\,  (M_{\lambda})_{V/W'}\cong M_\mu}
\end{equation}

\subsection{Restriction maps}
Fix an $I$-graded subspace $W$ of $V$ such that $\undd V=\gamma$ and $\undd W=\beta$. Let $F_{\bdg}$ be the closed subset of $E_{\gamma}$ consisting of representations $y$ such that $y(W)\subset W$. Let $P_I(W,V)\subset GL(\gamma)$ be the parabolic subgroup associated with $W$. We consider the following diagram
\begin{equation}\label{eq:resdiagram}
  E_\alpha\times E_\beta\stackrel{\kappa}{\leftarrow} F_{\bdg}\stackrel{\iota}{\rightarrow} E_\gamma
\end{equation}
where $\kappa(y)=(y_{\mid V_\gamma/W},y_{\mid W})$ and $\iota$ is the closed embedding. Note that $\kappa$ is a vector bundle of rank 
\begin{equation}
  \rank \kappa=\sum_{i\in I}\alpha_i\beta_i-\lara{\alpha}{\beta}
\end{equation}
For simplicity, we write 
$
\begin{pmatrix}
      &N & U\\
      &0 & M
  \end{pmatrix}
$ for the elements in $F_{\beta,\gamma}$, where $N\in E_\beta$, $M\in E_\alpha$, and $U\in \Hom_\Omega(V/W, W)$.

\subsection{The orbits and restriction maps}
Let us consider the restriction of the map $\kappa$ to the subvariety $\overline{\OO}_\lambda\cap F_{\beta,\gamma}$: $\kappa_\lambda: \overline{\OO}_\lambda\cap F_{\beta,\gamma}\to E_\alpha\times E_\beta$. The fiber of $\kappa_\lambda^{-1}(M_\mu,N_\nu)$ at the point $(M_\mu,M_\nu)$ is identified with 
\begin{equation}\label{eq_fibermn}
  \{E\in F_{\beta,\gamma}\mid \,  E=  \begin{pmatrix}
      &M_\nu & U\\
      &0 & M_\nu
  \end{pmatrix}\text{ and } E\geq M_\lambda\}
\end{equation}
In what follows, we write $\cExt{\lambda}{\mu}{\nu}$ for the variety $\kappa_\lambda^{-1}(M_\mu,M_\nu)$. It is a closed subset of $\Hom_\Omega(V/W,W)$.
\begin{Lemma}\label{lem:fiberorder}
If $\lambda'\geq \lambda$ then we have $\cExt{\lambda'}{\mu}{\nu}\subset\cExt{\lambda}{\mu}{\nu}$.
\end{Lemma}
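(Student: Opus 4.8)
The plan is to deduce the inclusion directly from the explicit description \refb{eq_fibermn} of the fibres of $\kappa_\lambda$, combined with the elementary observation that, by the very definition of the partial order on $\KP{\gamma}$, the hypothesis $\lambda'\ge\lambda$ is equivalent to the nesting of orbit closures $\overline{\OO}_{\lambda'}\subseteq\overline{\OO}_\lambda$. No geometry beyond this is needed: both $\kappa_\lambda$ and $\kappa_{\lambda'}$ are restrictions of the single vector-bundle map $\kappa$ of \refb{eq:resdiagram}, and the whole content of the lemma is that shrinking the domain shrinks the fibres over a fixed point.

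First I would unwind what it means to lie in $\kappa_{\lambda'}^{-1}(M,N)$: since $\kappa_{\lambda'}$ is $\kappa$ restricted to $\overline{\OO}_{\lambda'}\cap F_{\beta,\gamma}$, the description \refb{eq_fibermn} presents this fibre as the set of $E\in F_{\beta,\gamma}$ with (i) a short exact sequence $0\to N\to E\to M\to 0$ — equivalently $E_{\mid W}\cong N$ and $E_{\mid V/W}\cong M$, which is exactly $\kappa(E)=(M,N)$ — and (ii) $E\ge M_{\lambda'}$, i.e. $\OO_E\subseteq\overline{\OO}_{\lambda'}$. Next I would invoke the definition $\mu\le\nu\iff\OO_\nu\subseteq\overline{\OO}_\mu$ of the order on $\KP{\gamma}$: from $\lambda'\ge\lambda$ it gives $\OO_{\lambda'}\subseteq\overline{\OO}_\lambda$, and since $\overline{\OO}_\lambda$ is closed, $\overline{\OO}_{\lambda'}\subseteq\overline{\OO}_\lambda$. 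Then any $E$ meeting (ii) for $\lambda'$ has $\OO_E\subseteq\overline{\OO}_{\lambda'}\subseteq\overline{\OO}_\lambda$, i.e. it meets (ii) for $\lambda$; and condition (i) together with membership in $F_{\beta,\gamma}$ is word-for-word the same in the descriptions of $\kappa_{\lambda'}^{-1}(M,N)$ and $\kappa_\lambda^{-1}(M,N)$. So I would conclude $\kappa_{\lambda'}^{-1}(M,N)\subseteq\kappa_\lambda^{-1}(M,N)$. (Equivalently: $\overline{\OO}_{\lambda'}\cap F_{\beta,\gamma}\subseteq\overline{\OO}_\lambda\cap F_{\beta,\gamma}$, and on this common domain $\kappa_{\lambda'}$ and $\kappa_\lambda$ coincide since both are restrictions of $\kappa$, whence their fibres are nested.)

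I do not anticipate a genuine obstacle: the argument is essentially transitivity of the degeneration order read off from \refb{eq_fibermn}. The only points deserving care are bookkeeping the (slightly unusual) orientation of the order — making sure $\lambda'\ge\lambda$ unwinds to $\overline{\OO}_{\lambda'}\subseteq\overline{\OO}_\lambda$ and not the reverse — and confirming that \refb{eq_fibermn} is the honest fibre of the restricted map, so that the clause "$E\ge M_\lambda$" there is exactly "$E\in\overline{\OO}_\lambda$" and no hidden closure condition is added or dropped when passing from $\lambda'$ to $\lambda$.
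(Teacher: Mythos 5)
Your argument is correct and is exactly the paper's proof, merely spelled out: the paper's entire justification is the one-line observation that $\lambda'\geq\lambda$ gives $\overline{\OO}_{\lambda'}\subset\overline{\OO}_\lambda$, from which the nesting of fibres of the restricted map $\kappa$ is immediate via \refb{eq_fibermn}. You have simply unwound the definitions that the author leaves implicit; no difference in substance or strategy.
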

\begin{proof}
 It is easy to see this by $\overline{\OO}_{\lambda'}\subset\overline{\OO}_\lambda$.
\end{proof}

\section{Quiver Grassmannians and Extension varieties}
In this section, we will give some new interpretations of quiver Grassmannians and extension groups. First, let us recall the notion of the generic extension of two representations of $Q$. Following \cite{Rei}, we have
\begin{Definition}{\cite[Definition 2.2]{Rei}}\label{def_genericqo}
Let $M$ and $N$ be two representations of $Q$. We say $M*N$ is a \emph{generic extension} of $M,N$ if $M*N\in\extq{M}{N}$ such that $\odim\extq{M*N}{M*N}$ is minimal with respect to all extensions in $\extq{M}{N}$. If we write $M$ and $N$ for matrices $(V,x)$ and $(W,y)$ respectively, then any extension $E\in \extq{M}{N}$ can be expressed as the matrix block
\begin{equation}\label{eq_formextens}
M_u=
  \begin{pmatrix}
    y  & u \\ 
    0  & x
  \end{pmatrix}
\end{equation}
where $u\in \Hom_\Omega(V,W)$. Let us set 
\begin{equation}
  \Hom^1_Q(M,N):\Seteq{(u,f)\in \Hom_\Omega(V,W)\times \Hom_I(W\oplus V,W\oplus V)}{f\in \homq{M_u}{M_u}}
\end{equation}
Here we regard $\homq{M_u}{M_u}$ as the solution of the equation 
\[M_{u; h}f_i=f_j M_{u;h} \text{ for each arrow $h:i\to j$}\]
where $M_u=(M_{u;h})_\inda$.

It yields a map $q: \Hom^1_Q(M,N)\to \Hom_\Omega(V,W)$ by sending $(u,f)$ to $u$. It is easy to see that $q^{-1}(u)=\homq{M_u}{M_u}$. There is a unique open subvariety of $\Hom_\Omega(V,W)$ such that $\homq{M_u}{M_u}$ has minimal dimension. The notion of a generic extension of $M,N$ exactly means that the element $u$ for $M*N$ in \refb{eq_formextens} is in this open locus.
\end{Definition}

\subsection{Quiver Grassmannians}
Fix a partition $\apb=\gamma$ of a dimension vector $\gamma$ and a subspace $W$ of an $I$-graded vector space $V$ such that $\undd V=\gamma$ and $\undd W=\beta$. Take an element $x_\lambda\in \OO_\lambda$
of the orbit $\OO_\lambda$ for a Kostant partition $\lambda\in\KP{\gamma}$. Let us consider the following set 
\begin{equation}\label{eq:stableFab}
  G_\lambda\Seteq{g\in G_\gamma}{g\cdot x_{\lambda}(W)\subset W}\Seteq{g\in G_\gamma}{g\cdot
\begin{pmatrix}
y & u\\
0  & x
\end{pmatrix}
\in F_{\beta,\gamma}}
\end{equation}
where $x_\lambda=
\begin{pmatrix}
y & u\\
0  & x
\end{pmatrix}
$ for $u\in \Hom_\Omega(W,V/W)$, $y\in E_\beta$, and $x\in E_\alpha$. It is obvious that $P_{\beta,\gamma}\subset G_\lambda$ and 
\begin{equation}
    G_\lambda\cdot x_\lambda=\OO_\lambda\cap F_{\beta,\gamma}
\end{equation}
\begin{Lemma}\label{lem:quivgrassmanniangroup}
For a representation $M_\lambda$ of $Q$ with dimensional vector $\gamma$, we have $$G_\lambda/P_{\beta,\gamma}\cong\Gr_\beta(M_\lambda)$$
\end{Lemma}
\begin{proof}
By Definition \refb{def_quivergrass}, we see that $\Gr_\beta(M_\lambda)\Seteq{W'\in \Gr(\beta,\gamma)}{x_{\lambda}(W')\subset W'}$. Recall Definition \refb{eq_grassmannian} and that any elements in $\Gr(\beta,\gamma)$ are of the form $g\cdot W$ for some $g\in G_\gamma$, we have $x_{\lambda}(g\cdot W)\subset g\cdot W$. In other words, $g^{-1}\cdot x_\lambda(W)\subset W$. It follows that
\begin{equation}\label{eq_equivalentGr}
  g\cdot W\in \Gr_\beta(M_\lambda) \Longleftrightarrow  g^{-1}\in G_\lambda
\end{equation}
  It implies that there is a surjective map $G_\lambda\to \Gr_\beta(M_\lambda)$ sending $g$ to $g^{-1}W$. The stabilizer of $W$ is exactly $P_{\beta,\gamma}$. It leads to that $G_\lambda/P_{\beta,\gamma}\cong\Gr_\beta(M_\lambda)$.
\end{proof}



\noindent
We will give a decomposition of quiver Grassmannians. In what follows, we fix a decomposition $V=W\oplus U$ such that $\undd W=\beta$ and $\undd U=\alpha$, and we write 
$$
x_\lambda=
\begin{pmatrix}
  y &u \\ 
  0 & x
\end{pmatrix}
$$
where $y\in E_\beta,x\in E_\alpha$ and $u\in \Hom_\Omega(U,W)$\\ 

\noindent
Let $(\munu)\in\KP{\alpha}\times\KP{\beta}$ be a pair of Kostant partitions. Take representations $M_\nu=(W,y)$ and $M_\mu=(U,x)$ for Kostant partitions $\nu$ and $\mu$ such that 
\[0\to M_\nu\to M_\lambda\to M_\mu\to 0\]
Let us set 
\begin{equation}\label{eq_Glammunu}
  G_\lambda(x,y)\Seteq{g\in G_\gamma}{g\cdot
\begin{pmatrix}
y & u\\
0  & x
\end{pmatrix}
=
\begin{pmatrix}
y & u'\\
0  & x
\end{pmatrix}
\text{ for $u'\in\Hom_\Omega(U,W)$}
}
\end{equation}
It is easy to see that 
$P_\lambda(\munu):=P_{\beta,\gamma}\cap G_\lambda(\munu)=
\begin{pmatrix}
\Aut{M_\nu} & \Hom_I(U,W)\\
0  & \Aut{M_\mu}
\end{pmatrix}
$.
We will see that the subvariety $G_\lambda(\munu)$ will connect the quiver Grassmannians and extension varieties.\\

\begin{Lemma}\label{lem:dimfiberkappa}
For a representation $M_\lambda$ of $Q$ with a dimensional vector $\gamma$, we write $x_\lambda$ for its matrix form. Given a partition $\apb=\gamma$ and representations $M_\mu=x$ and $M_\nu=y$ of $Q$ with dimensional vector $\alpha$ and $\beta$ respectively. Let $W$ and $U$ be  fixed $I$-graded vector spaces $\beta$ and $\alpha$ respectively.  For any $\mu\oplus\nu\geq \lambda\geq \mu*\nu$, we see that $\cExt{\lambda}{\mu}{\nu}$ admits a dense open subset $\OO_\lambda\cap \cExt{\lambda}{\mu}{\nu}$. In what follows, we denote by $e_\lambda(\munu)$ the dimension of $\cExt{\lambda}{\mu}{\nu}$.
\end{Lemma}
\begin{proof}
For any $v\in  \Hom_\Omega(U,W)$, one defines 
$
M_{v}=
\begin{pmatrix}
y & v\\
0  & x
\end{pmatrix}
$.  Take a representation $\tM{M}=\bigoplus_{\alpha\in R^+}M_\alpha$ and the dimension vector $\tM{V}$ with $\undd \tM{M}=\tM{V}$. Set 
\[\bT\Seteq{(v,f)\in \Hom_\Omega(U,W)\times \Hom_I(V,\tM{V})}{f\in\homq{M_v}{\tM{M}}}\]
It is easy to see that there are two obvious maps
\begin{equation}
\begin{split}
    \Hom_I(V,\tM{V})\xleftarrow{\ \pi\ } &\bT\xrightarrow{\ \psi\ } \Hom_\Omega(U,W)\\
    f \mapsfrom (v&,f) \mapsto v
\end{split}
\end{equation}
It is easy to see that $\psi^{-1}(v)=\homq{M_v}{\tM{M}}$. It is known that $M_v\leq M_{v'}$ if and only if $\odim \psi^{-1}(v)\leq \odim \psi^{-1}(v')$ for any two element $v,v'\in \Hom_\Omega(U,W)$, see \cite{Bon}. 

We can see that $\bT^{\geq k}\Seteq{(v,f)\in \bT}{\odim \psi^{-1}(v)\geq k}$ is a closed subset of $\bT$. Let $\Hom_\Omega^{\geq k}(U,W)$ be the image of $\bT^{\geq k}$ under the map $\psi$. By \cite[Exercise 3.22 in Chapter 2]{H}, we see that $\bT^k\Seteq{(v,f)\in \bT}{\odim\psi^{-1}(v)=k}$ is a dense open subset of $\bT^{\geq k}$. It follows that its image $\Hom_\Omega^k(U,W)$ under the map $\psi$ is a dense subset of $\Hom_\Omega^{\geq k}(U,W)$. 

Taking $k=\odim \homq{M_\lambda}{\tM{M}}$, it follows that $M_v\geq M_\lambda$ if and only if $v\in \Hom_{\Omega}^{\geq k}(U,W)$ and $M_v\cong M_\lambda$ if and only if $v\in \Hom_{\Omega}^{k}(U,W):\Seteq{v\in \Hom_{\Omega}(U,W)}{\odim \psi^{-1}(v)= k}$. 

Meanwhile, $\overline{\OO}_\lambda\cap \kappa_\lambda^{-1}(x,y)$ consisting of elements 
$
\begin{pmatrix}
    &y  &v'\\
    &0  &x
\end{pmatrix}
$ with $v'\in \Hom_\Omega^{\geq k}(U,W)$. In other words, $\overline{\OO}_\lambda\cap \kappa_\lambda^{-1}(x,y)\cong \Hom_\Omega^{\geq k}(U,W)$. Similarly. we get $\OO_\lambda\cap \kappa_\lambda^{-1}(x,y)\cong \Hom_\Omega^k(U,W)$. Because $\OO_\lambda$ is an open subset of $\overline{\OO}_\lambda$, we see that $\OO_\lambda\cap \kappa_\lambda^{-1}(x,y)$ is an open subset of $\overline{\OO}_\lambda\cap \kappa_\lambda^{-1}(x,y)$. As we discussed before, $\Hom_\Omega^{k}(U,W)$ is a dense subset of $\Hom_\Omega^{\geq k}(U,W)$. It follows from the above discussion that $\OO_\lambda\cap \kappa_\lambda^{-1}(x,y)$ is a dense open subset of $\overline{\OO}_\lambda\cap \kappa_\lambda^{-1}(x,y)$.

From \refb{eq_Glammunu}, we see that $g\in G_\lambda(x,y)$ if and only if $u'\in \Hom_\Omega^k(U,W)$, which implies that $G_\lambda(x,y)/\Aut{M_\lambda}\cong \Hom_\Omega^k(U,W)$.
\end{proof}

\begin{Lemma}
Let $\lambda'>\lambda$ for some $\lambda'\leq \mu\oplus\nu$, we have $e_{\lambda'}(\munu)\leq e_\lambda(\munu)$.
\end{Lemma}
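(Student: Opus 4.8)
The plan is to deduce the inequality directly from the inclusion of fibers already recorded in Lemma \ref{lem:fiberorder}, avoiding the dimension formula of Proposition \ref{lem:dimfiberkappa} altogether. The key observation is simply that $\kappa_{\lambda'}^{-1}(M_\mu,M_\nu)$ sits inside $\kappa_\lambda^{-1}(M_\mu,M_\nu)$ as a closed subvariety, and that the hypothesis $\lambda'\leq\mu\oplus\nu$ keeps the smaller one nonempty.

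First I would recall from the proof of Proposition \ref{lem:dimfiberkappa} that, after fixing representatives $M_\mu$ on $U$ and $M_\nu$ on $W$, the fiber $\kappa_\lambda^{-1}(M_\mu,M_\nu)$ is realized as the closed subvariety $\{u\in\Hom_\Omega(U,W)\mid M_u\in\overline{\OO}_\lambda\}$ of the affine space $\Hom_\Omega(U,W)$, where $M_u=\begin{pmatrix} M_\nu & u\\ 0 & M_\mu\end{pmatrix}$. Since $\lambda'\geq\lambda$ forces $\overline{\OO}_{\lambda'}\subseteq\overline{\OO}_\lambda$, this immediately gives $\kappa_{\lambda'}^{-1}(M_\mu,M_\nu)\subseteq\kappa_\lambda^{-1}(M_\mu,M_\nu)$ inside one and the same affine space — which is precisely the content of Lemma \ref{lem:fiberorder}.

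Next I would use the hypothesis $\lambda'\leq\mu\oplus\nu$ to see that the smaller fiber is nonempty: it gives $\OO_{\mu\oplus\nu}\subseteq\overline{\OO}_{\lambda'}$, so the split extension $M_0=M_\mu\oplus M_\nu$ (i.e. $u=0$) lies in $\overline{\OO}_{\lambda'}$ and hence is a point of $\kappa_{\lambda'}^{-1}(M_\mu,M_\nu)$, a fortiori of $\kappa_\lambda^{-1}(M_\mu,M_\nu)$. A nonempty closed subvariety of an affine variety has dimension at most that of the ambient variety, so
\[e_{\lambda'}(\munu)=\odim\kappa_{\lambda'}^{-1}(M_\mu,M_\nu)\leq\odim\kappa_\lambda^{-1}(M_\mu,M_\nu)=e_\lambda(\munu).\]

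I do not expect a genuine obstacle: the only thing requiring care is bookkeeping — that both fibers are honest closed, nonempty subvarieties of the single space $\Hom_\Omega(U,W)$, which is exactly what Proposition \ref{lem:dimfiberkappa} and Lemma \ref{lem:fiberorder} already provide. I would deliberately steer away from the alternative route through the formula $e_\lambda(\munu)=\odim\Hom_\Omega(U,W)+[M_\lambda,M_\mu*M_\nu]-[M_\lambda,M_\lambda]$, since it would reduce the claim to the inequality $[M_{\lambda'},M_\mu*M_\nu]-[M_{\lambda'},M_{\lambda'}]\leq[M_\lambda,M_\mu*M_\nu]-[M_\lambda,M_\lambda]$, and there both $[M_{-},M_\mu*M_\nu]$ and $[M_{-},M_{-}]$ jump upward under the degeneration $\lambda\rightsquigarrow\lambda'$, so the two sides move in the same direction and the bound is not transparent without further input.
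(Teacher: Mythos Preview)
Your proposal is correct and follows essentially the same route as the paper: both arguments reduce to the inclusion $\kappa_{\lambda'}^{-1}(M_\mu,M_\nu)\subseteq\kappa_\lambda^{-1}(M_\mu,M_\nu)$ coming from $\overline{\OO}_{\lambda'}\subseteq\overline{\OO}_\lambda$, which is precisely Lemma~\ref{lem:fiberorder}. Your version is slightly more careful in that you explicitly verify nonemptiness of the smaller fiber using the hypothesis $\lambda'\leq\mu\oplus\nu$ (via the split extension $u=0$), whereas the paper leaves this implicit.
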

\begin{proof}
 Since $\overline{\OO}_{\lambda'}\subset\overline{\OO}_\lambda$, this implies that $\cExt{\lambda'}{\mu}{\nu}\subset\cExt{\lambda}{\mu}{\nu}$. 
\end{proof}

\begin{Lemma}\label{lem_opendense}
    For a partition $\apb=\gamma$ of a dimension vector $\gamma$ and a Kostant partition $\lambda\in \KP{\gamma}$ with $\lambda\in\ext(\alpha,\beta)$, we have that $\OO_\lambda\cap F_\bdg$ is a dense open subvariety of $\overline{\OO}_\lambda\cap F_\bdg$.
\end{Lemma}
\begin{proof}
Let us set 
\[\bT'\Seteq{(z,f)\in F_{\bdg}\times \Hom_I(V,\tM{V})}{f\in \homq{M_z}{\tM{M}}}\]
where $\tM{M}$ refers to the module $\bigoplus_{\alpha\in R^+}M_\alpha$ and $\undd \tM{M}=\undd \tM{V}$. 

Take $d:=[M_\lambda,\tM{M}]$, and let $\phi: \bT'\to F_{\beta}$ be the map sending $(z,f)$ to $z$. It is easy to see $\phi^{-1}(z)=\homq{M_z}{\tM{M}}$,
\[\overline{\OO}_\lambda\cap F_{\beta,\gamma}=\Set{z\in F_{\bdg}}{\odim \phi^{-1}(z)\geq d},\]
and
\[\OO_\lambda\cap F_{\beta,\gamma}=\Set{z\in F_{\bdg}}{\odim \phi^{-1}(z)=d}.\]
Using the proof of Lemma \ref{lem:dimfiberkappa}, we see that $\OO_\lambda\cap F_{\beta,\gamma}$ is a dense subset of $\overline{\OO}_\lambda\cap F_{\beta,\gamma}$. On the other hand, $\OO_\lambda\cap F_{\bdg}$ is an open subvariety of $\overline{\OO}_\lambda\cap F_{\bdg}$. Thus, we prove our claim. 
\end{proof}

\subsection{Irreducible components of quiver Grassmannians}\label{sec_comquivergrass}
In this section, we will consider the set of irreducible components of quiver Grassmannians. Let us fix a partition of $\gamma=\apb$ and a Kostant partition $\lambda\in\KP{\gamma}$. By \cite{CFR}, the irreducible components of $\Gr_\beta(M_\lambda)$ are the closure of the following subvarieties 
\[S_{N}\Seteq{W'\in \Gr_\beta(M_\lambda)}{x_{\lambda\mid W'}\cong N}\]
for some $N\in E_\beta$. Similarly, these irreducible components are the closure of the subvariety
\[T_{M}\Seteq{W'\in \Gr_\beta(M_\lambda)}{x_{\lambda\mid V/W'}\cong M}\]
for some  $M\in E_\alpha$. Therefore, for such a $S_N$, there exists a $M\in E_\alpha$ such that
  $$\overline{S}_N=\overline{T}_M$$ 
  In this case, the subvariety $S_N\cap T_M$ is an open subvariety of the component $\overline{S}_N$. 
\begin{Definition}
  We define a pair $(\munu)$ as a \emph{component pair} of $\lambda$ if $\overline{S}_{M_\nu}=\overline{T}_{M_\mu}$ is a component of $\Gr_\beta(M_\lambda)$. Therefore, the set of irreducible components of $\Gr_{\beta}(M_\lambda)$ is equal to the set of component pairs $(\munu)$ of $\lambda$. We denote it by $\ext_{\adb}^{ger}(\lambda)$.
\end{Definition}
\begin{Remark}\label{rem_genericpair}
Note that for any component pair $(\munu)$ we see that $M_\mu$ is the generic quotient of $M_\lambda$ by $M_\nu$, \emph{i.e.} there exists no $\mu'<\mu$ such that 
$$0\to M_\nu\to M_\lambda\to M_{\mu'}\to 0$$
and $M_\nu$ is the generic subrepresentation of $M_\lambda$ with quotient $M_\mu$, \emph{i.e.} there exists no $\nu'<\nu$ such that 
$$0\to M_{\nu'}\to M_\lambda\to M_{\mu}\to 0$$
We call $(\munu)$ a \emph{generic pair} if $M_\mu$ is the generic quotient of $M_\lambda$ by $M_\nu$ and $M_\nu$ is the generic subrepresentation of $M_\lambda$ with quotient $M_\mu$. Therefore, a component pair is a generic pair. But a generic pair is not a component pair in general. \\ 
\end{Remark}

\noindent
Thanks to Cerulli Irelli and Reineke, the following lemma gives a description of component pairs.
\begin{Lemma}{[\textbf{Cerulli Irelli and Reineke}]}\label{lem_irrcompo}
Let $\Gr_\beta(M_\lambda)$ be the Grassmannian quiver for representation $M_\lambda$, the subvariety $\overline{S}_N$ is an irreducible component of $\Gr_\beta(M_\lambda)$ if and only if 
\begin{equation}\label{eq_NMT}
  [N,M_\lambda]-[N,N]=[N,T]
\end{equation}
here $T$ is the generic quotient of $M_\lambda$ by $N$, (see Remark \ref{rem_genericpair}).
\end{Lemma}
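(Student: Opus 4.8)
The plan is to characterize when $\overline{S}_N$ is an irreducible component of $\Gr_\beta(M_\lambda)$ by computing the dimension of $S_N$ and comparing it with a known upper bound for the dimension of any irreducible component. First I would recall from \cite{CFR} (or re-derive via the torsor description around \refb{eq:grassman}) that $S_N$ is irreducible and smooth, being (up to a $G_\beta$-torsor) an open subvariety of the space of injections $f\in\Hom_I(W,V)^0$ whose image is $x_\lambda$-stable with induced subrepresentation isomorphic to $N$; concretely $S_N$ fibers over $\OO_N\subset E_\beta$ with fiber the $x_\lambda$-compatible injections, and one gets $\odim S_N = [N,M_\lambda] - [N,N]$ after quotienting by $\Aut_Q(N)$. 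This is essentially the content already packaged in \thmref{them:Ggammamunu} and \propref{lem:dimfiberkappa}: indeed $S_{M_\nu}\cap T_{M_\mu}$ is exactly $G_\lambda(\munu)/P_\lambda(\munu)$, whose dimension is $[M_\lambda,M_\mu*M_\nu]-[M_\mu,M_\mu]-[M_\nu,M_\nu]$, and one identifies $[M_\lambda,M_\mu*M_\nu]$ with $[N,M_\lambda]$ when $T$ is the generic quotient.

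Next I would invoke the classical dimension estimate for quiver Grassmannians of arbitrary (not necessarily rigid) modules: every irreducible component $Z$ of $\Gr_\beta(M_\lambda)$ satisfies $\odim Z \le [N,M_\lambda]-[N,N]$ where $N$ is the generic subrepresentation along $Z$, with equality characterizing the components — this is the Caldero–Reineke type bound, and it is exactly where the generic quotient $T$ enters via the Euler form identity $\langle\undd N,\undd T\rangle = [N,T]-[N,T]^1$ together with $\undd M_\lambda = \undd N + \undd T$. Writing $[N,M_\lambda] = [N,N]+[N,T]+([N,N]^1\text{-correction terms})$ and using that genericity forces the relevant $\Ext^1$ or $\Hom$ comparisons to vanish, the equality $[N,M_\lambda]-[N,N]=[N,T]$ becomes equivalent to $S_N$ being dimension-maximal among the strata through a neighborhood, hence to $\overline{S}_N$ being a component.

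For the forward direction: if $\overline{S}_N$ is a component, $S_N$ is open in it, so $\odim S_N$ equals the component dimension; combining the lower bound $\odim S_N = [N,M_\lambda]-[N,N]$ with the fact that a degeneration $N'<N$ occurring in $\overline{S}_N$ would put $\overline{S}_{N'}$ (of dimension $[N',M_\lambda]-[N',N']$) inside the same component forces the numerical equality \refb{eq_NMT} via the semicontinuity of $[-,M_\lambda]$. For the converse, if \refb{eq_NMT} holds then $\odim S_N$ meets the universal upper bound, so its closure cannot be properly contained in any larger stratum closure, hence is a component. The main obstacle I anticipate is making the upper bound argument clean in this generality: the modules $M_\lambda$ here are arbitrary, not rigid, so one cannot use the naive $\Ext^1$-free dimension count, and the genericity hypothesis (that $T$ is \emph{the} generic quotient of $M_\lambda$ by $N$, in the sense of \remref{rem_genericpair}) must be used precisely to kill the error terms relating $[N,M_\lambda]$, $[N,N]^1$, and $[N,T]$ — this is the delicate point, and I would handle it by working on the stratum $S_N\cap T_{T}$ and applying \thmref{them:Ggammamunu} to identify its dimension, then showing via upper-semicontinuity of fiber dimension for the map $q: E_{\beta,\gamma}\to E_\gamma$ that no other stratum through $\overline{S}_N$ can exceed it.
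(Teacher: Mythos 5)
Your proposal correctly identifies the two numbers in play — $\dim S_N=[N,M_\lambda]-[N,N]$ and the role of the generic quotient $T$ — but it never pins down the one fact that makes the comparison work: the Zariski tangent space to the quiver Grassmannian at a subrepresentation $N\subset M_\lambda$ is canonically $T_N\Gr_\beta(M_\lambda)\cong\Hom_Q(N,M_\lambda/N)$, so at a generic point of $S_N$ its dimension is exactly $[N,T]$. This is the paper's entire argument: one always has
\[
[N,T]=\dim T_N\Gr_\beta(M_\lambda)\ \geq\ \dim T_N\overline{S}_N\ \geq\ \dim\overline{S}_N=[N,M_\lambda]-[N,N],
\]
so the equation \refb{eq_NMT} is precisely the condition that $N$ is a nonsingular point of $\Gr_\beta(M_\lambda)$ lying in $\overline{S}_N$, and a cited criterion (\cite[Lemma 3.1]{Hu}) then says the equality $\dim T_N\Gr_\beta(M_\lambda)=\dim T_N\overline{S}_N$ forces $\overline{S}_N$ to be a component; the converse is the same chain read backwards at a nonsingular point $g\cdot N$ of the component. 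Without the tangent-space identification, your appeals to a ``Caldero--Reineke type bound'' and a ``universal upper bound'' are not grounded — indeed $[N,M_\lambda]-[N,N]$ is not an upper bound for all component dimensions, it is simply the dimension of the single stratum $S_N$, and the Euler-form identity $\langle\undd N,\undd T\rangle=[N,T]-[N,T]^1$ by itself does not relate $[N,T]$ to any tangent space.

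The forward direction in your write-up is also not an argument: saying that a degeneration $N'<N$ inside $\overline{S}_N$ ``forces the numerical equality via semicontinuity of $[-,M_\lambda]$'' does not produce the identity $[N,M_\lambda]-[N,N]=[N,T]$; semicontinuity of $\Hom$-dimensions goes the wrong way to conclude an equality, and there is no step turning a containment of strata into the stated numerical relation. What the paper actually does is choose a nonsingular point $g\cdot N\in S_N$ of the component $\overline{S}_N$ (generic, so the quotient there is $T$); nonsingularity gives $\dim T_{g\cdot N}\overline{S}_N=\dim\overline{S}_N$ and ``$\overline{S}_N$ is a component'' gives $\dim T_{g\cdot N}\overline{S}_N=\dim T_{g\cdot N}\Gr_\beta(M_\lambda)=[N,T]$, and the two together are \refb{eq_NMT}. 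I would not say your approach is wrong in spirit — you are clearly circling the right dimension count — but as written it has a genuine gap: the tangent space formula $T_N\Gr_\beta(M_\lambda)=\Hom_Q(N,T)$ is never stated, and without it neither implication closes.
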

\begin{proof}
Let us assume that $[N,M_\lambda]-[N,N]=[N,T]$. It is well known that $\odim \overline{S}_N=[N,M_\lambda]-[N,N]$ and tangent space at $N$ is $T_N\Gr_\beta(M_\lambda)=\homq{N}{T}$ (see \cite[Lemma 2.3;2.4]{CFR}). Since 
$$[N,T]=\odim T_N\Gr_\beta(M_\lambda)\geq \odim T_N \overline{S}_N\geq \odim \overline{S}_N=[N,M_\lambda]-[N,N]$$
then the equation \refb{eq_NMT} implies that $N$ is a nonsigular point in $\overline{S}_N$. Meanwhile, by \cite[Lemma 3.1]{Hu}, the identity $\odim T_N\Gr_\beta(M_\lambda)=\odim T_N \overline{S}_N$ implies that $\overline{S}_N$ is an irreducible component of $\Gr_\beta(M_\lambda)$. \\

\noindent
If $\overline{S}_N$ is an irreducible component of $\Gr_\beta(M_\lambda)$, let us take a nonsingular point $g\cdot N$ in $\overline{S}_N$. This means $\odim T_{g\cdot N}\overline{S}_N=\odim T_{g\cdot N}\Gr_\beta(M_\lambda)=[N,T]$. The fact that $g\cdot N$ is a nonsingular point implies $\odim  T_{g\cdot N}\overline{S}_N=\odim \overline{S}_N=[N,M_\lambda]-[N,N]$. Hence, we obtain $$[N,M_\lambda]-[N,N]=[N,T]$$
\end{proof}
\noindent
Therefore, we obtain the following theorem.
\begin{Theorem}{[\textbf{Cerulli Irelli and Reineke}]}\label{theo_irrcom}
 Given a partition $\apb=\gamma$ and a Kostant partition $\lambda\in\KP{\gamma}$. Let $M_\lambda$ be the module associated with $\lambda$. We have that the set of irreducible components of $\Gr_\beta(M_\lambda)$ is identified with the following set
  \begin{center}
\begin{multline*}
\ext_{\adb}^{ger}(\lambda)=\left\{(\mu,\nu)\in \KP{\alpha}\times\KP{\beta}\middle|\,
\begin{minipage}{.5\linewidth}
Such that $(\munu)$ is a generic pair of $\lambda$ and
$[M_\nu,M_\lambda]
=[M_\nu,M_\nu\oplus M_\mu]$
\end{minipage}
\right\}
\end{multline*}
\end{center}
The notion of generic pair is given in Remark \ref{rem_genericpair}.
\end{Theorem}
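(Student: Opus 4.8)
The statement is a bookkeeping reformulation of Lemma~\ref{lem_irrcompo} together with the definition of component pair and Remark~\ref{rem_genericpair}, so the plan is simply to assemble those pieces. First I recall from \cite{CFR} that $\Gr_\beta(M_\lambda)$ decomposes as the disjoint union of the irreducible locally closed strata $S_N$, indexed by the isomorphism classes of $\beta$-dimensional representations $N$, and dually as the union of the strata $T_M$; hence every irreducible component of $\Gr_\beta(M_\lambda)$ is the closure $\overline{S}_N$ of a unique stratum, and is also the closure $\overline{T}_M$ of a unique stratum, so by the defining property of component pairs the set of irreducible components of $\Gr_\beta(M_\lambda)$ is exactly the set of component pairs $(\munu)$ of $\lambda$. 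It therefore suffices to prove that $(\munu)$ is a component pair if and only if it is a generic pair of $\lambda$ and $[M_\nu,M_\lambda]=[M_\nu,M_\nu]+[M_\nu,M_\mu]$.

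For the forward implication I would argue: if $(\munu)$ is a component pair then by Remark~\ref{rem_genericpair} it is a generic pair, so in particular $M_\mu$ is the generic quotient of $M_\lambda$ by $M_\nu$; applying Lemma~\ref{lem_irrcompo} with $N=M_\nu$ to the irreducible component $\overline{S}_{M_\nu}$, where the generic quotient $T$ is now $M_\mu$, yields the numerical identity $[M_\nu,M_\lambda]-[M_\nu,M_\nu]=[M_\nu,M_\mu]$. For the converse I would start from a generic pair $(\munu)$ satisfying $[M_\nu,M_\lambda]=[M_\nu,M_\nu]+[M_\nu,M_\mu]$; genericity again gives $T\cong M_\mu$ for the generic quotient of $M_\lambda$ by $M_\nu$, so the hypothesis is precisely the criterion of Lemma~\ref{lem_irrcompo} and $\overline{S}_{M_\nu}$ is an irreducible component of $\Gr_\beta(M_\lambda)$. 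Writing this component as $\overline{T}_M$ for the unique $M\in E_\alpha$, its generic point has subrepresentation isomorphic to $M_\nu$ and quotient isomorphic to $M$; since the stratum $S_{M_\nu}$ is irreducible the quotient along it is generically constant and must equal the generic quotient $M_\mu$, so $M\cong M_\mu$, $\overline{S}_{M_\nu}=\overline{T}_{M_\mu}$, and $(\munu)$ is a component pair.

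The only point that is not pure formalism is the well-definedness of ``the generic quotient of $M_\lambda$ by $M_\nu$'' and the fact that it equals $M_\mu$ for every generic pair — both directions of the argument hinge on this. This rests on the irreducibility of the strata $S_N$ and on the dimension and tangent-space formulas $\odim\overline{S}_N=[N,M_\lambda]-[N,N]$ and $T_N\Gr_\beta(M_\lambda)=\homq{N}{T}$ recalled from \cite{CFR}, which already underlie Lemma~\ref{lem_irrcompo}; once these are granted the theorem follows by combining the two characterizations above. I expect the main subtlety to lie exactly in this bookkeeping around which $\mu$ is attached to a given $\nu$ — i.e.\ that the $\mu$ in a component pair is forced to be the generic quotient — rather than in any new geometric input.
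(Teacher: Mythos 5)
Your proof is correct and takes essentially the same approach as the paper, which simply invokes Lemma~\ref{lem_irrcompo} ("Following the above lemma"); your write-up just spells out the bookkeeping — that a component pair is a generic pair (Remark~\ref{rem_genericpair}), that for a generic pair the generic quotient $T$ in Lemma~\ref{lem_irrcompo} is $M_\mu$, and conversely that the $\mu$ paired with $\nu$ in a component must be the generic quotient. No gap.
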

\begin{proof}
It follows from the above lemma.
\end{proof}
\begin{Corollary}\label{cor_irrOOlam}
  Given a partition $\apb=\gamma$ of a dimensional vector $\gamma$ and a Kostant partition $\lambda\in\KP{\gamma}$, the set of irreducible components of $\overline{\OO_\lambda}\cap F_{\bdg}$ is identified with $\ext_\adb^{ger}(\lambda)$.
\end{Corollary}
\begin{proof}
    Because $\Gr_\beta(M_\lambda)=G_\lambda/P_{\bdg}$ and the fact $P_{\bdg}$ is irreducible, the set of irreducible components of $G_\lambda$ is equal to that of quiver Grassmannian $\Gr_\beta(M_\lambda)$. On the other hand, the fact that $\OO_\lambda\cap F_\bdg=G_\lambda\cdot M_\lambda=G_\lambda/\Aut{M_\lambda}$ implies that the set of irreducible components of $\OO_\lambda\cap F_\bdg$ is equal to that of $G_\lambda$. By Lemma \ref{lem_opendense},  we see that the set of irreducible components of $\overline{\OO_\lambda}\cap F_{\bdg}$ is equal to the set of irreducible components of $\Gr_\beta(M_\lambda)$.
\end{proof}

\begin{Example}\label{exa_A3}
In the case $A_3$, let us consider $\lambda=[1,2]+[2,3]$. If $\beta=\alpha_2+\alpha_3$, let $V=V_1\oplus V_2\oplus V_3$ such that $\odim V_1=\odim V_3=1$ with base elements $e_1,e_3$ and $\odim V_2=2$ with base elements $e_2,e'_2$. Set $x_1: e_1\to e_2 $ and $x_2: e'_2\to e_3$. Fixing $W=\lara{e'_2}{e_3}$, we have 
\[\Gr_\beta(M_\lambda)\Seteq{g W\in \Gr(1,2)}{x_2(g e'_2)\subset W_3=V_3}\]
where $g\in \GL_2$.
It is easy to see that $\Gr_\beta(M_\lambda)=\Gr(1,2)\cong \PP^1$. 

If $\beta=\alpha_1+\alpha_2$ and $W=\lara{e_1}{e_2}$, then 
\[\Gr_\beta(M_\lambda)\Seteq{g W\in \Gr(1,2)}{x_1(e_1)\subset g W_2=\langle g e_1\rangle}\]
This means $\Gr_\beta(M_\lambda)=\{pt\}$.\\ 

\noindent
 Let us consider $\Gr_\beta(M_\lambda)$ with $\beta=\alpha_2+\alpha_3$. There are two generic pairs
\[([2,2]+[3,3],[1,1]+[2,2]) \quad \text{ and } \quad ([2,3],[1,2])\]
by 
  \begin{align*}
    0\to S_2\oplus S_3\to &M_\lambda\to S_1\oplus S_2\to 0 \\ 
    0\to M[2,3]\to &M_\lambda\to M[1,2]\to 0
  \end{align*}
 But $\ext_\adb^{ger}(\lambda)=([2,3],[1,2])$. This follows from the equation
\begin{multline}
  [S_2\oplus S_3, S_2\oplus S_3]+[S_2\oplus S_3, S_1\oplus S_2]=2+1
  >[S_2\oplus S_3, M[2,3]\oplus M[1,2]]=2
\end{multline}

\noindent
In the case $A_2$, following \cite[Section 8.3]{CFR1}, for any dimension vector $\gamma=(d_1,d_2)$ the Kostant partitions in $\KP{\gamma}$ are given by the rank $r$ of the matrix $x_1:\CC^{d_1}\to \CC^{d_2}$.  Let $\beta=(e_1,e_2)<(d_1,d_2)$ be a sub-dimension vector of $\gamma$. If $r<e_1-e_2+d_2$, the set of irreducible components of $\Gr_\beta(M_r)$ is identified with 
\[\ext_\adb^{ger}(M_r)\Seteq{a\in \NN}{\maxs\{0,r+e_1-d_1,r-d_2+e_2\}\leq a\leq \mins\{e_1,e_2,r}\}\]
\end{Example}

\section{Graded quiver varieties}\label{sec_quivervariety}
In this section, we will explore the concept of graded quiver varieties and the connections between the space of representations for $Q$ and the graded quiver varieties. 
\subsection{Repetition quivers}
Let's start by delving into the notion of repetition algebras. For a quiver $Q=(I,\Omega)$, we define a \emph{height function} $\xi :\, I\to \ZZ$ on its vertex set such that 
\begin{equation}
    \xi_i=\xi_j+1    \text{  if there exists an arrow $h:i\to j$}
\end{equation}

By fixing a specific height function $\xi$, we can define a set
\begin{equation}
    \widehat{I}\Seteq{(i,p)\in I\times \ZZ}{p-\xi_i\in 2\ZZ}
\end{equation}
We attach to $Q$ the infinite \emph{repetition quiver} $\widehat{Q}$, defined as the quiver with vertex set $\widehat{I}$ and two type arrows:
\begin{align}
     (h,p):\, (i,p)\to (j,p+1)  &\text{ if $h:i\to j$}\\ 
     (\bar{h},q):\, (j,q) \to (i,q+1)  &\text{ if $h:i\to j$}
 \end{align} 
 for all $(i,p),(j,q)\in\widehat{I}$.  $\widehat{Q}$ is recognized as a $\ZZ$-cover of the preprojective algebra associated with $Q$. 

 Consider $\widehat{\Delta}=R^+\times\ZZ$, where $R^+$ denotes the collection of positive roots. We will now explain a natural way to label the vertices of $\widehat{Q}$ using the elements of $\widehat{\Delta}$. Let $\gamma_i$ be the root associated with indecomposable injective representation $I_i$ for each $i\in I$. There exists a unique bijection $\phi :\, \widehat{I}\to \widehat{\Delta}$ defined inductively as follows.
 \begin{enumerate}
      \item $\phi(i,\xi_i)=(\gamma_i,0)$ for $i\in I$.
      \item Suppose $\phi(i,p)=(\beta,m)$ then 
          \begin{align*}
              \phi(i,p-2)&=(\tau\beta,m)  &\text{ if } \tau\beta \in R^+\\ 
               \phi(i,p-2)&=(-\tau\beta,m-1)  &\text{ if } \tau\beta \in R^-\\ 
               \phi(i,p+2)&=(\tau^{-1}\beta,m)  &\text{ if } \tau^{-1}\beta \in R^+\\ 
               \phi(i,p+2)&=(-\tau^{-1}\beta,m+1)  &\text{ if } \tau^{-1}\beta \in R^-
          \end{align*}
          where $\tau$ is the Auslander-Reiten translation for $Q$.
  \end{enumerate} 
 It is worth noting that the Auslander-Reiten quiver $\Gamma_Q$ can be viewed as a subquiver of $\widehat{Q}$ by mapping $M_\beta$ to $\phi^{-1}(\beta,0)$. Let $\widehat{I}_Q=\phi^{-1}(R^+\times \{0\})$, which represents the vertices of $\Gamma_Q$.\\

\noindent
We define the \emph{$q$-analog of the Cartan matrix} $C_q$ on $\NN[I\times\ZZ]$ by 
\begin{equation}\label{eq_cartanq}
    \begin{split}
        C_q: \, \NN[I\times\ZZ]&\to \NN[I\times\ZZ]\\ 
              V(i,p) &\mapsto V(i,p-1)+V(i,p+1)-\sum_{j\sim i}a_{i,j}V(j,p)
    \end{split}
\end{equation}
where $V(i,p)$ is the dimension vector with support at $(i,p)$, and its coefficient at $(i,p)$ is equal to $1$. When it is said that $i\sim j$, it implies that there is an arrow either from $i$ to $j$ or from $j$ to $i$ in the set $\Omega$.

  \subsection{Graded quiver varieties}\label{sec_gradedquivervarieties} 
  Consider a finite dimensional $\widehat{I}$-graded vector space
  \[W=\bigoplus_{(i,p)\in\widehat{I}}W_i(p).\]
  Nakajima \cite{Na}  introduced an affine variety $\mathcal{M}_W$ associated with $W$. More precisely, let 
  $$\widehat{J}\Seteq{(i,p)\in I\times \ZZ}{(i,p-1)\in \widehat{I}},$$
and let  
 \[V=\bigoplus_{(i,p)\in\widehat{J}}V_i(p)\]
 be a finite dimensional $\widehat{J}$-graded vector space. One defines 
 \begin{align}
     L^\bullet(V,W)&=\bigoplus_{(i,p)\in \widehat{J}} \Hom(V_i(p),W_i(p-1))\\ 
     L^\bullet(W,V)&=\bigoplus_{(i,p)\in \widehat{I}} \Hom(W_i(p),V_i(p-1))\\ 
     E^\bullet(V)&=\bigoplus_{(i,p)\in \widehat{J}, j\sim i} \Hom(V_i(p),V_j(p-1))
 \end{align}
 Put $M^\bullet(V,W)=E^\bullet(V)\oplus L^\bullet(W,V)\oplus L^\bullet(V,W)$. An element of $M^\bullet(V,W)$ is written by $(B,\alpha,\beta)$, and its components are denoted by 
  \begin{equation}\label{eq_Bab}
  \begin{split}
         B_{ij}(p)&\in \Hom(V_i(p),V_j(p-1))\\ 
     \alpha_i(p)&\in \Hom(W_i(p),V_i(p-1))\\ 
     \beta_i(p)&\in \Hom(V_i(p),W_i(p-1))
  \end{split}
 \end{equation}
 Let $\Lambda^\bullet(V,W)$ be the subvariety of $M^\bullet(V,W)$ defined by the equations
 \begin{equation}\label{eq_formularelation}
     \alpha_{i}(p-1)\beta_i(p)+\sum_{i\to j} B_{ji}(p-1)B_{ij}(p) -\sum_{k\to i} B_{ki}(p-1)B_{ik}(p)=0
 \end{equation}
 The group $G_V=\prod_{(i,p)\in \widehat{J}}\GL(V_i(p))$ acts on $M^\bullet(V,W)$ by 
 \[g\cdot (B,\alpha,\beta)=(g_j(p-1) B_{ij}(p) g^{-1}_i(p), g_i(p-1)\alpha_i(p), \beta_{i}(p)g^{-1}_i(p))\]
 This action preserves
 $\Lambda^\bullet(V,W)$. One defines the affine variety
 \begin{equation}
     \fMba{V,W}=\Lambda^\bullet(V,W)\sslash G_V.
 \end{equation}
 By definition, the coordinate ring of $\fMba{V,W}$ is the ring of $G_V$-invariant functions on $\Lambda^\bullet(V,W)$. If there is a graded vector space $V'$ such that $V_i(p)\leq V'_i(p)$ for all $(i,p)\in \widehat{J}$, then we have a natural closed embedding $\fMba{V,W}\subset\fMba{V',W}$. Finally, one can define
 \begin{equation}
     \fMba{W}=\bigcup_{V}\fMba{V,W}
 \end{equation}
 Let $\fMbr{V,W}$ be the open subset of $\fMba{V,W}$ parametrizing the closed free $G_V$-orbits. For a given $W$, we have $\fMbr{V,W}\neq \emptyset$ for only a finite number of $V$'s. Nakajima has shown that these subvarieties $\fMbr{V,W}$ give a stratification of $\fMba{V,W}$ as follows.
 \begin{equation}\label{eq_strareg}
     \fMba{W}=\bigsqcup_{V}\fMbr{V,W}
 \end{equation}
 A necessary condition for $\fMbr{V,W}\neq \emptyset$ is that
 \begin{equation}
     \odim W_i(p)-\odim V_i(p+1)-\odim V_i(p-1)+\sum_{i\sim j} V_j(p)\geq 0
 \end{equation}
 for each $(i,p)\in \widehat{I}$. In other words, $\undd W-C_q(\undd V)\geq 0$. In this case, we say that $(V,W)$ is a \emph{dominate pair}. We denote by $\IC_W(V)$  the intersection cohomology complex of the closure of the stratum $\fMbr{V,W}$.

 \subsection{An isomorphism}
 Let $\gamma=\sum_{i=1}^r\gamma_i \alpha_i$. We define an $\widehat{I}$-graded vector space $W^\gamma$ by taking
 \begin{equation}\label{eq_wip}
     W_j(p)=\CC^{\gamma_i}  \text{  if  }  \phi(j,p)=(\alpha_i,0)
 \end{equation}
 and $W_i(p)=0$ for others $(i,p)\in\widehat{I}$.Following \cite[Theorem 9.11]{HG}, the subsequent theorem is as follows.
 \begin{Theorem}[{\cite[Theorem 9.11]{HG}}]\label{theo_desingular}
     We have a $G_\gamma$-equivariant isomorphism $$\Psi: \fMba{W^\gamma}\cong E_\gamma$$ such that 
     \begin{enumerate}
         \item The stratification given by $G_\gamma$-orbit on $E_\gamma$ coincides with the stratification \refb{eq_strareg}. Namely, $\Psi(\fMbr{V,W^\gamma})=\OO_\lambda$ for some $\lambda\in\KP{\gamma}$;
         \item As a corollary, we have $\IClam=\Psi_*(\IC_W(V))$ where $\IC(\lambda)$ is given in Section \ref{sec_lus}.
     \end{enumerate}
 \end{Theorem}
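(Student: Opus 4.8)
The plan is to follow Nakajima's analysis of graded quiver varieties of Dynkin type, reformulated for the repetition quiver $\widehat Q$ as in the sources underlying step~3 of the strategy, producing $\Psi$ from an explicit module-theoretic dictionary and then deducing (1) and (2) formally. By construction $\fMba{V,W^\gamma}=\Lambda^\bullet(V,W^\gamma)\sslash G_V$ is an affine categorical quotient, so its closed points are the closed $G_V$-orbits in $\Lambda^\bullet(V,W^\gamma)$; these are the semisimple modules of the prescribed graded dimension over the framed, truncated preprojective algebra attached to $\widehat Q$ and $W^\gamma$, and the union $\bigcup_V$ records in addition the datum of $V$.

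First I would establish the dictionary. The essential point --- where the Dynkin hypothesis and the special shape of $W^\gamma$ enter --- is that, because $W^\gamma$ is concentrated at the level-$0$ vertices $\phi^{-1}(\alpha_i,0)$ of $\widehat I$ and the preprojective relations \refb{eq_formularelation} are precisely the mesh relations governing the Auslander--Reiten quiver $\Gamma_Q\hookrightarrow\widehat Q$ under the labelling $\phi$, such a module is completely determined by the linear maps crossing level $0$, and this package of maps is exactly a representation of $Q$ of dimension vector $\gamma$. Carrying this out in families over the $G_V$-quotients gives a morphism $\Psi\colon\fMba{W^\gamma}\to E_\gamma$; the inverse is the standard ``knitting'' construction, which from a representation $M$ of $Q$ builds a point of $\Lambda^\bullet(V_M,W^\gamma)$ with closed $G_{V_M}$-orbit, $V_M$ being forced by $\gamma$ and the $q$-Cartan matrix $C_q$. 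One checks these are mutually inverse morphisms, so $\Psi$ is an isomorphism of varieties; equivariance for $G_\gamma=G_{W^\gamma}$ is immediate because on both sides $G_\gamma$ acts by changing the framing, respectively the basis at each vertex of $Q$.

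For (1): the strata $\fMbr{V,W^\gamma}$ are the loci of closed free $G_V$-orbits and partition $\fMba{W^\gamma}$ by \refb{eq_strareg}, while $E_\gamma=\bigsqcup_\lambda\OO_\lambda$ is the $G_\gamma$-orbit partition. Under $\Psi$ the isomorphism type of the $Q$-representation attached to a point depends only on the stratum $\fMbr{V,W^\gamma}$ containing it (indeed only on $V$), so $\Psi$ maps that stratum into a single orbit $\OO_\lambda$, and by a dimension count --- unwinding $W^\gamma-C_q(V)\ge 0$ and comparing $\dim\fMbr{V,W^\gamma}$ with $\dim\OO_\lambda$ via the Euler-form identity \refb{prop_eulerform} --- it maps onto it; this simultaneously produces the expected bijection $\{V:(V,W^\gamma)\ \text{dominant}\}\xrightarrow{\ \sim\ }\KP{\gamma}$. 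Finally (2) is formal: an isomorphism of varieties carrying the stratum $\fMbr{V,W^\gamma}$ onto $\OO_\lambda$ carries the intersection cohomology complex of $\overline{\fMbr{V,W^\gamma}}$ onto that of $\overline{\OO_\lambda}$, so $\IClam=\Psi_*(\IC_W(V))$, the shift/normalisation conventions matching because $\dim\fMbr{V,W^\gamma}=\dim\OO_\lambda$ and $\Psi_*=\Psi_!$ for an isomorphism.

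The main obstacle is the module-theoretic translation in the second paragraph: making rigorous that a module with framing concentrated at the vertices $\phi^{-1}(\alpha_i,0)$ ``is'' a representation of $Q$ of dimension $\gamma$ --- why the preprojective relations cut out exactly $E_\gamma$, with no superfluous and no missing solutions; why taking affine $G_V$-quotients and then the union over all $V$ recovers every $Q$-representation, including the non-semisimple ones (the ``resolution of non-semisimplicity'' characteristic of Nakajima varieties), rather than only $\Spec$ of the invariants; and why the resulting bijection on points upgrades to a scheme isomorphism. Matching the index bijection with Kostant partitions through the $\phi$-labelling and $C_q$ is a secondary, purely combinatorial, point.
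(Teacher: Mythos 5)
The paper does not actually supply a proof of Theorem~\ref{theo_desingular}: it is imported verbatim from Hernandez--Leclerc, with the citation ``\cite[Theorem 9.11]{HG}'' placed both before the statement and in the theorem heading. So there is no in-paper argument to compare yours against. What the paper does do, a few lines later, is develop the module-theoretic dictionary you sketch (the algebra $\Lambda\cong B_Q$ from \cite{CFR2}, the restriction functor $\Res$ in \refb{eq_Reslam}--\refb{eq_resmaps}, and the inverse functor $\widehat{-}$ in \refb{eq_widehat}), following Keller--Scherotzke and Leclerc--Plamondon; this is essentially the machinery your sketch relies on, so your outline is at least consistent with the paper's framework.

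On the substance of your sketch: the phrasing ``such a module is completely determined by the linear maps crossing level $0$'' is not quite the right mechanism. The closed points of the affine quotient $\fMba{V,W^\gamma}$ do not record the individual maps $B,\alpha,\beta$; they record the $G_V$-invariant functions, which by Le~Bruyn--Procesi/Lusztig are generated by compositions $\beta(\text{paths in }B)\alpha\colon W_i(p)\to W_j(q)$. The morphism $\Psi$ is obtained by assembling exactly those compositions --- this is the content of \refb{eq_resmaps} --- and the nontrivial points are (a) that the mesh relations \refb{eq_formularelation} force these compositions to satisfy no further relations, so the image is all of $E_\gamma$, and (b) that $\Psi$ is a closed immersion, hence an isomorphism, on the GIT quotient. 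You flag this yourself in your last paragraph, and that flag is accurate: it is the entire proof, not a detail. Your ``knitting'' inverse corresponds to the functor $\widehat{-}$ of \refb{eq_widehat}, which is the right object, but it needs the Leclerc--Plamondon result (cited in the paper as \cite[Corollary 3.15]{LP} and via $\undd\widehat{-}$ being bijective) to see that $\widehat{M}$ has closed $G_V$-orbit and that $M\mapsto\undd\widehat{M}$ separates isomorphism classes; the latter is precisely what makes part~(1) --- the matching of strata $\fMbr{V,W^\gamma}$ with orbits $\OO_\lambda$ --- come out, without any need for the dimension count you invoke. Part~(2) as you argue it is fine and is indeed formal once (1) and the isomorphism of varieties are in hand.
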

Let us set 
\begin{equation}\label{eq_bilinear}
\begin{split}
    \lara{-}{-}:\, \NN[I\times\ZZ]\times & \NN[I\times\ZZ]\to \ZZ \\ 
              (V,& W)\mapsto \sum_{(i,a)\in I\times\ZZ}V(i,a)W(i,a)
\end{split}
\end{equation}
and
$q^\pm V=(V'(i,p))$ such that $V'(i,p)=V(i,p\pm 1)$. For a pair $(V_1,W_1; V_2,W_2)$ of graded vector spaces, one defines $d(V_1,W_1; V_2,W_2)$ by 
\begin{equation}\label{eq_dv1v2}
  d(V_1,W_1; V_2,W_2)=\lara{\undd V_1}{q^{-1}(\undd W_2-C_q \undd V_2)}+\lara{\undd V_2}{q\undd W_1}
\end{equation}
and set 
\begin{equation}\label{eq_epsilon}
  \epsilon(V_1,W_1; V_2,W_2)=d(V_1,W_1; V_2,W_2)-d(V_2,W_2; V_1,W_1)
{}\end{equation}
 \subsection{Representation varieties}
In this part, we will explore $\fMb{V}{W}$ as module varieties for a specific algebra. Using the tuples $(B,\alpha,\beta)$, we can define an algebra $\Lambda$ in the following manner. Consider $\Lambda$ to be the algebra correlated with $\Lambda^\bullet(V,W)$. The vertices in the quiver $\widehat{\Gamma}_Q$ of $\Lambda$ consist of $I\times \ZZ$ and the edges in this quiver are determined by
 \begin{align*}
      (h,p):\, (i,p)\to (j,p-1)  &\text{ if $(i,p)\in \widehat{J}$  and } h: i\to j \\ 
      (\bar{h},p):\, (j,p)\to (i,p-1)  &\text{ if $(i,p)\in \widehat{J}$  and } h: i\to j \\ 
      (a,p):\, (i,p)\to (i,p-1)  &\text{ if $(i,p)\in \widehat{I}$ } \\ 
      (b,p):\, (i,p)\to (i,p-1)  &\text{ if $(i,p)\in \widehat{J}$ } 
  \end{align*} 
  Let $R$ represent the idea generated by the equations \refb{eq_formularelation}. Thus, we can express $\Lambda$ as $k\widehat{\Gamma}_Q/R$. In this context, we can interpret $\Lambda$ as the algebra $\mathcal{R}$ as defined in \cite{KS}. \\

\noindent
Following \cite{CFR2}, if $W$ subjects to \refb{eq_wip}, then the algebra $\Lambda$ originates from the category $\cH_Q$. Here, the objects in $\cH_Q$ consist of exact sequences
\[0\to P\to Q\]
where $P,Q$ are the projective representation of $Q$, The morphisms in this category are a tuple of morphisms $(f,g)$, ensuring that the following diagram is commutative
\[
\xymatrix{
  & 0\ar[r] & P\ar[r]\ar[d]^f & Q \ar[d]^g\\ 
  & 0\ar[r] & P'\ar[r] & Q'
}\]
for any pair $(P\to Q), (P'\to Q')$. The category is generated by projective covers of nonprojective indecomposable representations $M_\beta$ of $Q$ in addition to projective representations. That is, we take the projective cover of the indecomposable representation $M_\beta$.
\[0\to P_\beta \to Q_\beta \to M_\beta\to 0\]
if $M_\beta$ is not a projective representation of $Q$ and $(0\to P_\beta \to Q_\beta)=(M_\beta=M_\beta)$ if $M_\beta$ is a projective representation. For simplicity, we also write $(P_\alpha\to Q_\alpha)$ for $(M_\beta=M_\beta)$ if $M_\beta$ is a projective representation of $Q$.  Set 
\begin{equation}
  B_Q:=\End_{\cH_Q}(\bigoplus_{\beta\in R^+}(P_\beta\to Q_\beta))^{op}
\end{equation}
Following \cite[Theorem 4.11]{CFR2}, we have 
\begin{equation}\label{eq_BQ}
  B_Q\cong \Lambda_{\mid \Gamma_Q} 
\end{equation}
The $\Res $ functor will be introduced, mapping from $\Lambda-\omod$ to $kQ-\omod$. Let $M$ be a representation of $\Lambda_{\mid \Gamma_Q}$, we write it for $(V,W,x)$, where $V,W$ are $\Gamma_Q$-graded vector spaces such that $V_\beta \neq 0 $ only if $M_\beta$ is not a projective representation, and $W_\beta\neq 0$ only if $M_\beta$ is a projective representation. One defines 
\begin{equation}\label{eq_Reslam}
  \Res:\,  \Lambda_{\mid \Gamma_Q}-\omod \to kQ-\omod
\end{equation}
by sending $\cM=(V,W,x)$ to $(W,y)$ where $y:W_{i}\to W_{j}$ is given by the composition $x_{h_1}x_{h_2}\cdots x_{h_r}$ of a following path in the Auslander-Reiten Quiver $\Gamma_Q$ of $Q$.
\begin{equation}\label{eq_resmaps}
  P_{i}\stackrel{h_r}{\rightarrow} S_i\stackrel{h_{r-1}}{\rightarrow}\cdots \stackrel{h_2}{\rightarrow}\tau^{-1}S_j\stackrel{h_1}{\rightarrow}P_j
\end{equation}
where $\tau$ is the Auslander-Reiten translation. \\ 

\noindent
The functor $\widehat{-}$ will be introduced, mapping from $kQ-\omod$ to $\Lambda-\omod$. Consider $M$ as a representation of $Q$, and for every positive root $\beta\in R^+$, we construct 
\begin{equation}
  \widehat{M}_\beta:= \Ima(\homq{Q_\beta}{M}\to \homq{P_\beta}{M})
\end{equation}
and the matrices $x_h: \widehat{M}_\beta\to \widehat{M}_{\beta'}$ by 
\begin{equation}
  \xymatrix{
  &\homq{Q_\beta}{M}\ar[d]_{\homq{f}{M}}\ar@{->>}[r]&\widehat{M}_\beta\ar@{-->}[d]^{x_h} \ar@{^{(}->}[r] &\homq{P_\beta}{M}\ar[d]^{\homq{g}{M}}\\ 
  &\homq{Q_{\beta'}}{M}\ar@{->>}[r]&\widehat{M}_{\beta'} \ar@{^{(}->}[r] &\homq{P_{\beta'}}{M}
  }
\end{equation}
where $f,g$ are induced by $M_{\beta'}\to M_\beta$. Therefore, we obtain the functor 
\begin{equation}\label{eq_widehat}
\begin{split}
    \widehat{-}: kQ-\omod&\to \Lambda-\omod\\ 
       M &\mapsto \widehat{M}
\end{split}
\end{equation}
In the following discussion, we will denote $i$ as the dimension vector of indecomposable projective representation $P_i$, and we will consistently refer to $\beta$ as the roots of non-projective indecomposable representations of $Q$. Given a dimension vector $\mathbf{d}$ for $\Lambda_{|\Gamma_Q}$, we can represent it as $$\mathbf{d} = \mathbf{d}_V + \mathbf{d}_W,$$ where $\mathbf{d}_V$ has support in $R^+\setminus\{i\}_{i\in I}$ and $\mathbf{d}_W$ has support in $\{i\}_{i\in I}$. For the corresponding graded vector $(V,W)$, we can view $\Lambda^\bullet(V,W)$, as shown in equation \refb{eq_formularelation}, as the representation variety $\Rep_\mathbf{d}(\Lambda)$. If $\mathbf{d}_W = \gamma$, then Theorem \ref{theo_desingular} implies
\begin{equation}
  \Phi: \fMba{W}\cong E_\gamma
 \end{equation} 

\noindent
To define the graded quiver varieties using an algebraic approach, the concept of stable points in $\Rep_{\bd}(\Lambda)$ must be explained. A module $M$ over $\Lambda$ is considered stable if it satisfies $\Hom_{\Lambda}(S_{\alpha}, M) = 0$ for all simple modules $S_{\alpha}$ supported on nonprojective roots. On the other hand, a module $M$ is called costable if it satisfies $\Hom_{\Lambda}(M, S_{\alpha}) = 0$ for all simple modules $S_{\alpha}$ supported on nonprojective roots. The subvariety $\Lambda^{st}(V, W)$ consists of stable modules in $\Rep_{\bd}(\Lambda)$. It is observed that stable modules align with stable points as defined in \cite{Na}, where any submodule $S_{\alpha}$ of $M$ remains stable under the matrices $B_h$ and $\beta(S_{\alpha}) = 0$ for equation $\ref{eq_Bab}$.\\ 

\noindent
Following \cite[Section 3.4]{KS1}, we define \emph{graded quiver variety} by the $G_V$-quotient
\begin{equation}
    \fMb{V}{W}:= \Lambda^{st}(V,W)/G_V
\end{equation}
Given an element $g$ belonging to the group $G_V$, the equation $\Res g\cdot\cM=\Res \cM$ holds for any module $\cM$. In this case, the functor $\Res$ described in equation \refb{eq_Reslam} yields a proper mapping.
\begin{equation}\label{eq_pifMb}
     \pi : \fMb{V}{W}\to \fMba{W}=E_\gamma
 \end{equation}
 In \cite[Section 3.4]{KS1}, we have 
 \begin{equation}
  \fMbr{V,W}\cong \{[\cM]\in \fMb{V}{W}\mid\, \cM \text{ is a costable module.}\}
 \end{equation}
 Let $M$ be an element of $\fMba{W}$. It can be observed that the module $\widehat{M}$ is both stable and costable. This implies the existence of a distinct graded vector space $V$ such that the class $[\widehat{M}]$ belongs to $\fMb{V}{W}$. As mentioned in the reference \cite[Section 3.2]{LP}, the module $\widehat{M}$ acts as a functor on the split Grothendieck group $K_0^{split}(kQ)$ which is constructed based on the isomorphic classes of representations of $Q$, subject to the given relation $[M]+[N]-[M\oplus N]=0$. It is demonstrated that the module $\widehat{M}$ is uniquely determined by the representation $M$ up to isomorphism. This mapping is crucial in understanding the relationships between different modules in this context. We see that the following map
\begin{equation}
\begin{split}
    \undd \widehat{-}: K_0^{split}(kQ)&\to \ZZ[\beta]_{\beta\in R^+}\\ 
                      M&\mapsto \undd \widehat{M}
\end{split}
\end{equation}
is bijective.
 \begin{Definition}
Let $\NN[R^+]$ represent the monoid formed by the positive roots $R^+$. If we have two elements $v$ and $v'\in \NN[R^+]$, we define $v\geq v'$ as $v_{\beta}\geq v'_{\beta}$ for all $\beta\in R^+$. Assuming that $\bd_W=\gamma$ for a specific $\gamma\in \Qp$, we can define $V(\gamma)$ as the collection of elements $v\in \NN[R^+]$ such that $\undd \widehat{M}=v+\gamma$ for a given $M\in E_{\gamma}$. According to Theorem \ref{theo_desingular}, there exists a one-to-one correspondence $\Pi$ described by
   \begin{equation}\label{eq_Pi}
   \begin{split}
             \Pi: \KP{\gamma} &\to V(\gamma)\\ 
          \lambda &\mapsto \undd \widehat{M_\lambda}-\gamma
   \end{split}
   \end{equation}
   Let's consider the case where $\lambda' \leq \lambda$, indicating that $\OO_\lambda\subset \overline{\OO}_{\lambda'}$. By applying Theorem \ref{theo_desingular}, we can observe that 
   $$\fMbr{\Pi(\lambda),W}\subset \overline{\fMbr{\Pi(\lambda'),W}}$$ 
   This implies, based on equation \ref{eq_strareg}, that $\Pi(\lambda)\leq \Pi(\lambda')$.Therefore, we can conclude $\Pi(\lambda')\geq \Pi(\lambda)$ for any $\lambda'\leq\lambda$ in $\KP{\gamma}$. It is evident from Theorem \ref{theo_desingular} that the set $V(\gamma)$ consists of dimension vectors $v \in \mathbb{N}\Gamma_Q$ such that there exists a stable and costable module $M$ over $\Lambda$ with $\undd M = v+\gamma$. For simplicity, let's denote the image of $\Pi(\lambda)$ for $\lambda \in \KP{\gamma}$ as $v_\lambda$. Referring to \cite{CFR1} or \cite[Corollary 3.15]{LP}, we can express $v_\lambda$ as $(v_U)_{U \in \Gamma_Q}$, where
   \begin{equation}\label{eq_vlambda}
   v_U=\odim\Ima (\homq{Q_U}{M}\to \homq{P_U}{M}). 
   \end{equation}
   for the projective resolution of indecomposable non-projective representations $U$, and $v_U=\dim \homq{U}{M}$ if $U$ is a projective representation. It is easy to see $\Pi(\mu\oplus\nu)=\Pi(\mu)\oplus\Pi(\nu)$.
\end{Definition}
 Following \cite[Section 4.5]{KS1}, one can define the $CK$ functor by 
     \begin{align*}
         CK:\, kQ-\omod&\to k\Gamma_Q-\omod\\ 
            M   &\mapsto (\extq{M_\beta}{M})_{\beta}
     \end{align*}
     where $\beta$ runs over the non-projective roots. 
 \begin{Lemma}\label{lem_CK}
  This functor is identified with the definition given by \cite[Section 4.5]{KS1}
 \end{Lemma}
\begin{proof}
For a representation $M$ of $Q$, the functor $K_R$ from \cite{KS1} when restricted to the category $\mathcal{H}_Q$ is defined as $$K_R(M)(\iota: P\to Q)=\homq{Q}{M}\to \homq{P}{M}$$ for any $\iota: P\to Q$. Similarly, the functor $K_{LR}$ from \cite{KS1} when restricted to the category $\mathcal{H}_Q$ is equal to $\widehat{M}$. As described in \cite[Section 4.5]{KS1}, the functor $CK(M)$ can be seen as the cokernel of the natural map $\widehat{M}\to K_R(M)$, specifically given by
         \[CK(M)(\iota: P\to Q)=\extq{\coker \iota}{M}\]
The map $\widehat{M}\to K_R(M)$ can be regarded as the map $$\widehat{M}(\iota: P\to Q)=\Ima \homq{\iota}{-}\to \homq{P}{M}$$ for all $\iota:\, P\to Q$.
    Referring to \cite{CFR}, it is known that the support of $CK(M)$ lies within the set of non-projective indecomposable modules $U$ over $kQ$. Additionally, since $CK(M)(\iota: P_U\to Q_U)=\extq{U}{M}=D\homq{M}{\tau U}$ for the projective resolution of $U$ and $CK(M)(\Id : P\to P)=0$, it follows that $CK(M)$ is a module of the Auslander algebra $k\Gamma_Q$ according to \cite[Section 6.3]{CFR1}.
\end{proof}

 \noindent
We list some useful properties of graded quiver varieties. 
 \begin{Theorem}[{\cite{KS}}]\label{theo_quivervarieties}
   Under the above assumption, we have
   \begin{enumerate}
         \item $\fMbr{V,W}\Seteq{M\in \fMba{W}}{\undd \widehat{M}=(w,v)}$;
         \item \cite[Lemma 4.2]{KS} The fiber of $\pi: \fMb{V}{W}\to \fMba{W}$ at the point $M$ such that $\undd \widehat{M}=(v_0,w)$ is homeomorphic to the quiver Grassmannian 
         \[\Gr_{v-v_0}(CK(M))\]
         Following \cite[Theorem 3.14]{Na1}, the fiber $\pi^{-1}(M)\cong \pi^{\perp -1} (0)$ where $$\pi^\perp : \fMb{V-V^0}{W^\perp}\to \fMba{W^\perp}$$ where $W^\perp=W-C_q V^0$
     \end{enumerate}  
 \end{Theorem}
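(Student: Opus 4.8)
The plan is to assemble the statement from the algebraic description of graded quiver varieties recalled above, together with the two external inputs quoted in the statement: the fibre computation of Keller--Scherotzke \cite[Lemma 4.2]{KS} and Nakajima's transversal slice theorem \cite[Theorem 3.14]{Na1}. Throughout I keep the standing assumption $\bd_W=\gamma$, I use the $G_\gamma$-equivariant identification $\fMba{W}\cong E_\gamma$ of \thmref{theo_desingular}, and I record that $\pi\colon\fMb{V}{W}\to\fMba{W}$ is the proper map induced by the restriction functor $\Res$ via $\fMb{V}{W}=\Lambda^{st}(V,W)/G_V$.

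For part (1), recall that by construction $\fMbr{V,W}$ is the open locus of closed free $G_V$-orbits, which in the module-theoretic language of \secref{sec_gradedquivervarieties} are precisely the $\Lambda$-modules in $\Lambda^{st}(V,W)$ that are in addition costable (this is the characterization of \cite[Section 3.4]{KS1} quoted in the excerpt). On the other hand, for every $M\in\fMba{W}$ the module $\widehat{M}$ is stable and costable and lies over $M$, and it is the unique such module up to the $G$-action. Hence $M\in\fMbr{V,W}$ if and only if the $V$-component of $\undd\widehat{M}$ equals $v$; since $\undd\widehat{-}$ is a bijection this determines the stratum uniquely, and transporting through $\Psi$ yields the asserted equality. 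In effect (1) is \thmref{theo_desingular}(1) made explicit through the functor $\widehat{-}$ and the stratification \refb{eq_strareg}.

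For part (2), fix $M\in\fMba{W}$ and write $V^0$ for the graded space whose dimension $\undd V^0$ is the $V$-component $v_0$ of $\undd\widehat{M}$, so that $\widehat{M}\in\Lambda^{st}(V^0,W)$ is costable. A point of $\pi^{-1}(M)$ is a $G_V$-orbit of a stable $\Lambda$-module $\cM$ with $\Res\cM\cong M$. The content of \cite[Lemma 4.2]{KS} is that such $\cM$ sit in a chain $\widehat{M}\subseteq\cM\subseteq K_R(M)$ and are classified by the quotient $\cM/\widehat{M}$, which is a subrepresentation of the $k\Gamma_Q$-module $CK(M)=(\extq{M_\beta}{M})_\beta=K_R(M)/\widehat{M}$ of \lemref{lem_CK}, of dimension vector $v-v_0$; this assignment is $G_V$-equivariant and induces a homeomorphism $\pi^{-1}(M)\cong\Gr_{v-v_0}(CK(M))$ on passing to $G_V$-quotients. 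Finally, Nakajima's transversal slice theorem \cite[Theorem 3.14]{Na1}, applied to the pair $(V^0,W)$, produces a local isomorphism between a neighbourhood of the fibre $\pi^{-1}(M)$ in $\fMb{V}{W}$ and a neighbourhood of $\pi^{\perp-1}(0)$ in $\fMb{V-V^0}{W^\perp}$ with $W^\perp=W-C_qV^0$; restricting to the central fibres gives the homeomorphism $\pi^{-1}(M)\cong\pi^{\perp-1}(0)$.

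I expect the main obstacle to be the first assertion of part (2): one has to unwind the precise functorial correspondence underlying \cite[Lemma 4.2]{KS}, verify that the extra $V$-dimension picked up over $\widehat{M}$ is exactly $v-v_0$, and check that the correspondence is an isomorphism of schemes compatible with the $G_V$-action, so that it descends to a homeomorphism of quotients. A secondary technical point is the bookkeeping in the transversal slice step: one must match Nakajima's $\widehat{I}$-graded picture with the present reformulation through projective presentations, in particular checking that the shift $W^\perp=W-C_qV^0$ corresponds, on the side of the category $\cH_Q$, to discarding exactly the summands of $\widehat{M}$ accounting for $v_0$.
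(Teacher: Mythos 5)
The paper states \thmref{theo_quivervarieties} as a quoted result: it is attributed to \cite{KS} in the theorem heading, parts (2) cite \cite[Lemma 4.2]{KS} and \cite[Theorem 3.14]{Na1} directly, and no proof environment follows the statement. There is therefore no in-paper proof to compare your argument against.

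That said, your reconstruction is consistent with the material the paper actually recalls. For part (1) you correctly invoke the characterization of $\fMbr{V,W}$ as the locus of stable and costable modules from \cite[Section 3.4]{KS1}, together with the fact that $\widehat{M}$ is the unique bistable module over $M$ and that $\undd\widehat{-}$ determines the stratum. For part (2) you reproduce the Keller--Scherotzke mechanism: stable modules $\cM$ with $\Res\cM\cong M$ sandwich between $\widehat{M}$ and $K_R(M)$, and the passage $\cM\mapsto\cM/\widehat{M}$ identifies the fibre with $\Gr_{v-v_0}(CK(M))$, with $CK(M)=K_R(M)/\widehat{M}$ as in \lemref{lem_CK}; the transversal-slice reformulation via $W^\perp=W-C_qV^0$ is exactly Nakajima's slice statement restricted to the central fibre. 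Your flagged ``main obstacle'' and ``secondary technical point'' are the genuine places where a full proof would need care, and they are the right places to worry about. Since the paper outsources all of this to the references, your outline is an appropriate filling-in rather than a divergence from the paper's own method.

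One small bookkeeping remark: the theorem as printed writes $\undd\widehat{M}=(w,v)$ in part (1) and $\undd\widehat{M}=(v_0,w)$ in part (2); these orderings are inconsistent in the source, and you have silently normalized them, which is fine, but worth noting if you quote the statement verbatim elsewhere.
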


\subsection{Hall maps and graded quiver varieties}
Let $W$ be a graded vector space with dimension vector $\undd W=\gamma$ and fix a partition $W=W_1\oplus W_2$ such that $\undd W_1=\alpha$ and $\undd W_2=\beta$. Remember the Hall maps \refb{eq:resdiagram}
\begin{equation}
  E_\alpha\times E_\beta\stackrel{\kappa}{\leftarrow} F_{\bdg}\stackrel{\iota}{\rightarrow} E_\gamma
\end{equation}
 and the birational proper map $\pi:\fMb{V_\lambda}{W^\gamma}\to \fMba{V_\lambda,W^\gamma}=\overline{\OO}_\lambda\subset E_\gamma$, where $\undd V_\lambda=\Pi(\lambda)$ as in \ref{eq_Pi}. It induces the following diagram:
 \begin{equation}\label{dia_pihall}
\begin{tikzcd}
\fMb{V_\lambda}{\gamma} \arrow[d, "\pi"] 
 &Y_\bdg^\lambda  \arrow[d, "\pi_\beta"] \arrow[l, "\iota"] \arrow[r, "\kappa"]
 &\bigcup\limits_{v_1+v_2=v_\lambda}\fMb{v_1}{\alpha}\times \fMb{v_2}{\beta} \arrow[d, "\pi\times\pi"]\\
\overline{\OO}_\lambda 
&F_{\bdg}\cap \overline{\OO}_\lambda \arrow[r,"\kappa_\lambda"] \arrow[l,"\iota"]
&E_\alpha\times E_\beta
\end{tikzcd}
\end{equation}
where
 \[Y_\bdg^\lambda:\Seteq{F\in \fMb{V_\lambda}{\gamma}}{\Res F(W_2)\subset W_2 }\]
  Recall that for a representation $F=(B,\alpha,\beta)$ of $B_Q$, $\Res F$ is given by \refb{eq_resmaps}
  \begin{equation}\label{eq_x_h}
  x_h; W_{(i,p)}\to W_{(j,p-k-1)}, \text{ where $x_h=\beta_{(j,p-k)}B_{h_1}\cdots B_{h_m}\alpha_{i,p}$}
   \end{equation}
   If $x_h(W_{(i,p)}^1)\subset W_{(j,p-k-1)}^1$, then there exists $V_{(i,p-k)}^1$ for any $(i,p-k)$ inductively such that $\Ima \alpha_{(i,p)}$ and $\Ima B_{h_k}$ are contained within $V^1$ for all $k$. This leads to the creation of a subspace $(V^1,W^1)$ such that $F(V^1,W^1)\subset (V^1,W^1)$. Therefore, we can conclude
   \begin{equation}\label{eq_Ybdg}
    Y_\bdg^\lambda=\bigcup_{V_2\subset V_\lambda}\Set{[F]\in \fMb{V_\lambda}{\gamma}}{F(V_2,W_2)\subset (V_2,W_2)}
   \end{equation}
   Given that $F$ represents the orbit $G_V$ of the module with respect to $B_Q$, the subspace $V_2$ can be uniquely identified based on its dimension vector $v_2$. Specifically, for every $g$ in $G_V$ and any representation $F=(B, \alpha, \beta)$, the restriction $\Res F = \Res g \cdot F$. The expression for $Res g \cdot F$ as shown in equation \ref{eq_x_h} is equal to $$\beta_{j,p-k}g_{j,p-k}^{-1}g_{j,p-k} B_{h_1}g_{j',p-k+1}^{-1}...g_{i',p+1}B_{h,m}g_{i,p}^{-1}g_{i,p}\alpha_{i,p}=x_h.$$ Therefore, $g \cdot F(V'_2, W_2) \subset (V'_2, W_2)$ for any isomorphic spaces $V_2, V'_2=g V_2$. As the elements $\fMb{V_\lambda}{\gamma}$ represent the orbits of $F$, it is evident that $V_2$ is uniquely determined by its dimension vector $v_2$. In the subsequent discussions, we will refer to the subspace $V_2$ as $v_2$.
   \begin{Lemma}\label{lem_resF}
  We have 
  \begin{equation}
    \Res (F_{\mid (v_2,W_2)})=(\Res F)_{\mid W_2}
  \end{equation}
\end{Lemma}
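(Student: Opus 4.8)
The plan is to unwind the functor $\Res$ at the level of the structure matrices $(B,\alpha,\beta)$ and to observe that passing to the subrepresentation $(v_2,W_2)\subset(V_\lambda,W)$ commutes with the compositions that compute $\Res$. Recall from \refb{eq_resmaps} and \refb{eq_Bab} that, for a $B_Q$-module $F$ with data $(B,\alpha,\beta)$, the representation $\Res F$ is supported on the $W$-part and, for an arrow $h\colon i\to j$ of $Q$, its structure map is the composition
\[
x_h=\beta_{(j,p-k)}\,B_{h_1}\cdots B_{h_m}\,\alpha_{(i,p)}\colon\ W_{(i,p)}\longrightarrow W_{(j,p-k-1)}
\]
read along the fixed path $P_i\to S_i\to\cdots\to\tau^{-1}S_j\to P_j$ in the Auslander--Reiten quiver $\Gamma_Q$; this path depends only on $Q$, not on $F$. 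By the construction of $Y_\bdg$ in \refb{eq_Ybdg}, the subspace $(V_2,W_2)$ is $F$-stable, so $F_{\mid (v_2,W_2)}$ is a genuine subrepresentation whose structure maps are the restrictions of $\alpha$, $\beta$ and $B$ to the pieces of $V_2$ and $W_2$; in particular $\alpha_{(i,p)}$ sends $(W_2)_{(i,p)}$ into $(V_2)_{(i,p-1)}$, each $B_h$ preserves $V_2$, and $\beta_{(j,q)}$ sends $(V_2)_{(j,q)}$ into $(W_2)_{(j,q-1)}$.

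Applying the composition formula above to $F_{\mid (v_2,W_2)}$, the structure map of $\Res(F_{\mid (v_2,W_2)})$ starting from $(W_2)_{(i,p)}$ is $(\beta_{(j,p-k)}|_{V_2})(B_{h_1}|_{V_2})\cdots(B_{h_m}|_{V_2})(\alpha_{(i,p)}|_{W_2})$, which is exactly the restriction to $(W_2)_{(i,p)}$ of the composition $x_h$ defining $\Res F$: since each intermediate map preserves the pieces of $V_2$, the chain never leaves $V_2$ until the final $\beta$ lands it back in $W_2$. Hence the structure maps of $\Res(F_{\mid (v_2,W_2)})$ are precisely those of $\Res F$ restricted to $W_2$, that is, $\Res(F_{\mid (v_2,W_2)})=(\Res F)_{\mid W_2}$; the right-hand side is well defined because $F\in Y_\bdg$ forces $\Res F(W_2)\subset W_2$.

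The step I would be most careful about — and really the only place where something happens — is the claim that the intermediate vector spaces used when computing $\Res$ of the restricted module are precisely the restricted spaces $(V_2)_{(i,q)}$, and not proper subspaces of them; equivalently, that $V_2$ already contains the images of all the maps $\alpha$ and $B$ occurring along the Auslander--Reiten paths that enter the definition of $\Res$. This is exactly the defining property of $V_2$ in the inductive construction of $Y_\bdg$ in \refb{eq_Ybdg}. It is worth emphasizing that this is not purely formal, since $\Res$ is not compatible with arbitrary submodules of $F$: a submodule with a ``too small'' $V$-part could have a strictly different $\Res$; the lemma holds precisely because the submodules occurring in \refb{eq_Ybdg} are the canonical $F$-stable subspace $(V_2,W_2)$ determined by $W_2$.
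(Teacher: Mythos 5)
Your proof is correct and follows essentially the same approach as the paper's: both unwind $\Res$ at the level of the structure data $(B,\alpha,\beta)$ composed along the Auslander--Reiten path of \refb{eq_resmaps}, and observe that $F$-stability of $(V_2,W_2)$ forces each intermediate image to stay inside $V_2$, so that the composition computing $\Res$ of the restriction agrees with the restriction of the composition computing $\Res F$. Your write-up is more explicit than the paper's one-line argument; the only quibble is that the closing caveat is phrased slightly imprecisely --- for \emph{any} $F$-stable subspace of the form $(V_2',W_2)$ the same argument gives $\Res(F_{\mid(v_2',W_2)})=(\Res F)_{\mid W_2}$, so what actually fails for a ``too small'' $V$-part is $F$-stability itself, not the compatibility of $\Res$ --- but this side remark does not affect the correctness of the proof.
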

\begin{proof}
  The functor $\Res F$ can be represented as
  \begin{equation}
    x_h: W_2^i\to W_2^j  \text{ where } x_h=\beta_jB_{h_1}\cdots B_{h_m}\alpha_i
   \end{equation} 
   The condition $F(v_2, W_2)$ being a subset of $(v_2, W_2)$ indicates that the image of the restrictions of $\alpha_i, B_h$ to $W_2$ is within $V_2$. This implies that the map $x_h$ doesn't change under the restriction $(V_2,W_2)$.
\end{proof}
   \begin{Definition}\label{def_KQ}
  Suppose we have a dimension vector denoted as $v$. A partition of $v$, represented as $v_1+v_2=v$, is called a \emph{maximal partition} if there exists an element $F\in Y^{\lambda}_{\bdg}$ such that $F(V_2,W_2)\subset (V_2,W_2)$ and there is no other $V'_2$ such that $V_2\subset V'_2\neq V_2$ and $F(V'_2,W_2)\subset (V'_2,W_2)$. In simpler terms, $v_2$ is the largest dimension vector of the subspaces $V'_2$ that satisfy $F(V'_2,W_2)\subset (V'_2,W_2)$ for an element $F\in Y^{\lambda}_{\bdg}$. The set of all maximal partitions of $v$ is denoted as $\KQ{v; w_1,w_2}$.
   \end{Definition}
   \noindent
    Considering \refb{eq_Ybdg}, we obtain the following theorem.
    \begin{Theorem}
        For a partition $\apb=\gamma$ and $\lambda\in \KP{\gamma}$, the variety $Y_\bdg^\lambda$ can be decomposed into a disjoint union:
         \begin{equation}\label{eq_Ybdgdecom}
  Y_\bdg^\lambda=\bigsqcup_{(v_1,v_2)\in\KQ{v; w_1,w_2}} Y_\bdg^\lambda(v_1,v_2)
 \end{equation}
 where each $Y_\bdg^\lambda(v_1,v_2)$ is an irreducible variety. 
    \end{Theorem}
    \begin{proof}
    Let us express $Y_\bdg^\lambda$ in the following manner: \[ Y_\bdg^\lambda = \bigsqcup_{(v_1,v_2)\in\KQ{v; w_1,w_2}} Y_\bdg^\lambda(v_1,v_2) \]
Here, $Y_\bdg^\lambda(v_1,v_2)$ consists of elements $F$ where $v_2$ represents the largest $F$-stable subspace of $V$. It is important to note that for any distinct partitions $(v_1,v_2)$ and $(v'_1,v'_2)$, we have $$Y_\bdg^\lambda(v_1,v_2)\cap Y_\bdg^\lambda(v'_1,v'_2)=\emptyset.$$ If we assume the existence of a non-zero element $F$ in $Y_\bdg^\lambda(v_1,v_2)\cap Y_\bdg^\lambda(v'_1,v'_2)$, then it implies $F(V_2, W_2)\subset (V_2, W_2)$ and $F(V'_2, W_2)\subset (V'_2, W_2)$. Considering $V'=V_2+V'_2$, it becomes evident that $F(V_2+V'_2, W_2)\subset (V_2+V'_2, W_2)$. This contradicts the assumption that $v_2$ is maximal.\\ 
   
\noindent
   Let $\kappa_{v_1,v_2}$ be the map 
   \begin{equation}\label{eq_kappav12}
   \begin{split}
    \kappa_{v_1,v_2}:  Y_\bdg^\lambda(v_1,v_2)&\to \fMb{V_1}{\alpha}\times \fMb{V_2}{\beta}\\  
             F &\mapsto F_{\mid (V_1, W_1)} \times F_{\mid (V_2,W_2)}
   \end{split}
   \end{equation}
where $V_1= V/V_2$.
   We aim to demonstrate the stability of $F_{\mid (V_2,W_2)}$. In simpler terms, we seek to show that no nontrivial subspace $V' \neq 0$ can exist within $V_2$ such that $F_{\mid (V', 0)}$ is a subrepresentation of $F_{\mid (V_2,W_2)}$. If such a subspace $V'$ were to exist, it would contradict the stable nature of $F$ since $F_{\mid (V', 0)}$ would then be a non-zero subrepresentation of $F$. Furthermore, we will establish the stability of the representation $F_{\mid (V_1,W_1)}$. If there were a nonzero subspace $V'$ such that the restriction of $F_{\mid (V_1,W_1)}$ to $(V',0)$ is a subrepresentation of $F_{\mid (V_1,W_1)}$, then the combined subspace $(V_2 \oplus V', W_2)$ would satisfy $F(V_2 \oplus V', W_2) \subset (V_2 \oplus V', W_2)$. However, this scenario is impossible for $(V_2, W_2)$ given our assumption that $V_2$ is maximal. Consequently, we can conclude that the map $\kappa_{v_1,v_2}$ is indeed well-defined. \\
   
\noindent
Subsequently, it will be demonstrated that $Y_\bdg^\lambda(v_1,v_2)$ is an irreducible component for each maximal partition $(v_1,v_2)$. According to the reference \cite{VV1}, a cocharacter $\chi=q\Id_{W_2}\oplus \Id_{W_1}$ is established. As mentioned in \cite[Remark 3.4]{VV1}, it is observed that 
 \begin{equation}\label{eq_kappav1v2}
  \kappa_{v_1,v_2}: Y_\bdg^\lambda(v_1,v_2)\to \fMb{v_1}{\alpha}\times \fMb{v_2}{\beta}
 \end{equation}
 is a vector bundle with fibers of dimension $d(v_1,\alpha,v_2,\beta)$ (refer to equation \ref{eq_dv1v2}). Consequently, it can be concluded that $Y_\bdg^\lambda(v_1,v_2)$ is an irreducible variety.  This implies that equation \refb{eq_Ybdgdecom} gives rise to a set of irreducible components of $Y_\bdg^\lambda$.
    \end{proof}

\subsection{Open subvariety}
Let us assume that $\lambda\in\ext(\adb)$.
  We see that the restriction $\pi_\beta$ to $\overline{\OO}_\lambda\cap F_\bdg$ is birational, and its restriction to the open dense subvariety $\iota^{-1}\fMbr{V_\lambda,\gamma}$ of $Y_\bdg^\lambda$ is isomorphic to $\OO_\lambda\cap F_\bdg$. 
The Corollary \ref{cor_irrOOlam} implies that
\begin{equation}\label{eq_irrOO2}
    \iota^{-1}\fMbr{V_\lambda,\gamma}\cong \OO_\lambda\cap F_\bdg=\bigcup_{(\munu)\in \ext_\adb^{ger}(\lambda)}U(\munu)
\end{equation}
 where $U(\munu)$ is the irreducible components associated with $(\munu)$.

 This means that the set of components of $Y_\bdg^\lambda$ coincides with the set of components of $\Gr_{\beta}(M_\lambda)$.  It implies that for each $(\munu)\in\ext_\adb^{ger}(\lambda)$, there exists a unique maximal pair $(v_1,v_2)$ such that
 \begin{equation}\label{eq_UmunuY}
    Y_\bdg^\lambda(v_1,v_2)=\overline{U(\munu)}^{Y}
  \end{equation} 
  Here $\overline{U(\munu)}^{Y}$ refers to the closure of the inverse image of $U(\munu)$ under the map $\pi_\beta$ in the $Y_\bdg^\lambda$. In other words, we have the following lemma.
  \begin{Lemma}
  For a partition $\apb=\gamma$ and $\lambda\in \KP{\gamma}$, we have
        \begin{equation}\label{eq_extKP}
    \ext_\adb^{ger}(\lambda)=\KQ{v_\lambda;\adb}
  \end{equation}
  \end{Lemma}

\noindent
Let us review the diagram \refb{dia_pihall}, we will obtain the following diagram. 
\begin{Lemma}
    For a maximal partition $v_1+v_2=v_\lambda$, we will show that the following diagram is commutative,
\begin{equation}\label{dia_v1fmb}
\begin{tikzcd}
Y_\bdg^\lambda(v_1,v_2) \arrow[d, "\pi_\beta"] \arrow[r, "\kappa_{v_1,v_2}"]
 &\fMb{v_2}{\alpha}\times \fMb{v_1}{\beta}  \arrow[d, "\pi\times\pi"] \\
\overline{U(\munu)} \arrow[r,"\kappa_\lambda"]
&\overline{\OO_\mu}\times\overline{\OO_\nu}
\end{tikzcd}
\end{equation}
 where $(\munu)$ is the pair corresponding to $(v_1,v_2)$ via \refb{eq_UmunuY}.
\end{Lemma}
\begin{proof}
     Taking $F\in Y_\bdg^\lambda(v_1,v_2)$, we have 
  \begin{align*}
    \kappa_\lambda\pi_\beta(F)&=\kappa_\lambda \Res F\\ 
    &=((\Res F)_{\mid W_2},(\Res F)_{\mid W_1})\\ 
    &=(\Res F_{\mid V_2,W_2}, \Res F_{\mid V_1,W_1}) \quad \text{ By Lemma \ref{lem_resF}}\\ 
    &=(\pi\times\pi) \kappa_{v_1,v_2}(F)
  \end{align*}
where $\pi \times \pi: \bigsqcup_{v_1+v_2=v}\fMb{v_1} {\alpha}\times\fMb{v_2}{\beta}\to \bigcup_{\munu\in\ext_{\adb}^{\mins}(\lambda)}\overline{\OO}_\mu\times \overline{\OO}_\nu$ is defined by 
 \begin{equation}
  (F_{\mid(V_1,W_1)},F_{\mid(V_2,W_2)})\mapsto (\Res F_{\mid(V_1,W_1)},\Res F_{\mid(V_2,W_2)})
 \end{equation}
\end{proof}

 \begin{Remark}\label{rem_Ybdg}
Note that the map $\pi\times\pi$ does not always result in a birational map, as $$\widehat{\Res F}\neq F$$ in most cases (see \cite[Theorem 3.4]{CFR2}). Specifically, we have $\odim \widehat{\Res F}\leq \odim F$, with the equality holding only when $F$ is a bistable point. Therefore, the map $\pi\times\pi$ is birational if and only if $(v_\mu,v_\nu)=(v_1,v_2)$ which implies $M_\lambda=M_\nu\oplus M_\mu$.

It is worth noting that the decomposition \refb{eq_Ybdg} of $Y_\bdg^\lambda$ coincides with the decomposition in the sense of Varagnolo and Vasserot \cite{VV1} or Nakajima \cite[Section 3.5]{Na1}. Remember that the following diagram for $E_\gamma$ can be found in \cite[Section 3.1]{VV1}:
\begin{equation}
\begin{tikzcd}
\bigcup_{v_1+v_2=v}\fMb{v_1}{\alpha}\times \fMb{v_2}{\beta} \arrow[d, "\pi\times\pi"]
& \iota^{-1}(\fMb{v}{\gamma}) \arrow[l,"\kappa'"]\arrow[r, "\iota'"]  \arrow[d, "\pi_\beta"] 
& \fMb{v}{\gamma} \arrow[d, "\pi"]\\
\bigcup_{v_1+v_2=v}\fMba{v_1,\alpha}\times \fMba{v_2,\beta} 
&F_{\bdg}\cap \fMba{v,\gamma} \arrow[l, "\kappa" ] \arrow[r,"\iota"]
&  \fMba{v,\gamma}
\end{tikzcd}
\end{equation}
Here 
\begin{equation}
  \iota^{-1}(\fMb{v}{\gamma})=\bigcup_{v_1+v_2=v} \mathfrak{Z}(v_1,\alpha,v_2,\beta) 
\end{equation}
such that $\mathfrak{Z}(v_1,\alpha,v_2,\beta)$ is a vector bundle over $\fMb{V_1}{\alpha}\times \fMb{V_2}{\beta}$ of rank $d(v_1,\alpha,v_2,\beta)$ (see \refb{eq_dv1v2}). That means $\mathfrak{Z}(v_1,\alpha,v_2,\beta)=Y_\beta(v_1,v_2)$.
 \end{Remark}

 \subsection{A birational map}
Let's consider the extension variety denoted by $\cExt{\lambda}{\mu}{\nu}:=\kappa_\lambda^{-1}(M_\mu,M_\nu)$. We are going to define a birational map on this extension variety. According to Lemma \ref{lem:dimfiberkappa}, for a pair $(\mu,\nu)$, it is stated that the subset $ \kappa_\lambda^{-1}(M_\mu,M_\nu) \cap \OO_\lambda$ is an open subset of $ \kappa_\lambda^{-1}(M_\mu,M_\nu)$. Let's remember the decomposition given in equation \refb{eq_irrOO2}.
\begin{equation}\label{eq_sqcupOO}
    \kappa_\lambda^{-1}(M_\mu,M_\nu)\cap \OO_\lambda\subset \OO_\lambda\cap F_\bdg=\bigsqcup_{(\mu',\nu')\in\ext_\adb^{ger}(\lambda)}U(\mu',\nu')
\end{equation}
The disjoint union follows from \refb{eq_Ybdgdecom} and \refb{eq_UmunuY}. This means that for each irreducible component $X_i$ of $ \cExt{\lambda}{\mu}{\nu}$ there exists a unique $(\mu',\nu')\in \ext_\adb^{ger}(\lambda)$ such that 
\[X_i \subset \overline{U(\mu',\nu')}\]
. 

We will give a birational map for each irreducible component $X_i$ of $\cExt{\lambda}{\mu}{\nu}$.
It is enough to consider the diagram \refb{dia_v1fmb} for $(\mu',\nu')$ to know $X_i$. 
\begin{equation}\label{dia_Yv1v2}
\begin{tikzcd}
Y_\bdg^\lambda(v_1,v_2) \arrow[d, "\pi_\beta"] \arrow[r, "\kappa_{v_1,v_2}"]
&\fMb{v_2}{\alpha}\times \fMb{v_1}{\beta} \arrow[d, "\pi\times \pi"]\\
\overline{U(\mu',\nu')} \arrow[r, "\kappa_\lambda"]
 &\overline{\OO_{\mu'}}\times\overline{\OO_{\nu'}}
\end{tikzcd}
\end{equation}
  where $(v_1,v_2)$ is given by \refb{eq_UmunuY}. Let us define $\pi\cExt{\lambda}{\mu}{\nu}_i$ by
\begin{equation}
  \pi\cExt{\lambda}{\mu}{\nu}_i=\kappa_{v_1,v_2}^{-1}(\Gr_{v_{1}-v_{\mu}}(CK(M_\mu))\times \Gr_{v_{2}-v_{\nu}}(CK(M_\nu)))
\end{equation}
where $\kappa_{v_1,v_2}$ is the map in \refb{dia_Yv1v2}, and $\Gr_{v_{1}-v_{\mu}}(CK(M_\mu))$ and $\Gr_{v_{2}-v_{\nu}}(CK(M_\nu))$ are quiver Grassmannians of $CK(M_\mu)$ and $CK(M_\nu)$, respectively. By Theorem \ref{theo_quivervarieties}, we have
\[(\pi\times\pi)^{-1}(M_\mu,M_\nu)=\Gr_{v_{1}-v_{\mu}}(CK(M_\mu))\times \Gr_{v_{2}-v_{\nu}}(CK(M_\nu))\]
It follows that 
$$\pi\cExt{\lambda}{\mu}{\nu}_i=\kappa_{v_1,v_2}^{-1}(\pi\times\pi)^{-1}(M_\mu,M_\nu)=\pi_\beta^{-1}(X_i)$$
Since the restriction $\pi_\beta$ to $\OO_\lambda\cap \kappa_\lambda^{-1}(M_\mu,M_\nu)$ is an isomorphism, we obtain the following birational map
\begin{equation}
  \pi_\beta: \pi\cExt{\lambda}{\mu}{\nu}=\bigcup_{i}\pi\cExt{\lambda}{\mu}{\nu}_i\to \cExt{\lambda}{\mu}{\nu}=\bigcup_i X_i
\end{equation}\\ 

\noindent
On the other hand, $\pi\cExt{\lambda}{\mu}{\nu}$ is a vector bundle over $$\Gr_{v_{1}-v_{\mu}}(CK(M_\mu))\times \Gr_{v_{2}-v_{\nu}}(CK(M_\nu))$$ with dimension of the fibers $d(v_1,\alpha,v_2,\beta)$ by \refb{eq_kappav1v2}. Therefore, we have the following Lemma.

\begin{Lemma}\label{lem_extbiration}
  Given a pair $(\munu)\in\KP{\alpha}\times\KP{\beta}$, let $\lambda\in \ext(\munu)$. There is a unique maximal pair $(v_1,v_2)\in \KQ{v_\lambda}$ such that 
  \[\pi_\beta: \pi\cExt{\lambda}{\mu}{\nu}\to \cExt{\lambda}{\mu}{\nu}\] 
  is a birational map. Here $\pi\cExt{\lambda}{\mu}{\nu}$ is a vector bundle over $$\Gr_{v_{1}-v_{\mu}}(CK(M_\mu))\times \Gr_{v_{2}-v_{\nu}}(CK(M_\nu))$$ with dimension of the fibers $d(v_1,\alpha,v_2,\beta)$.
 \end{Lemma}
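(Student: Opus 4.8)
The plan is to assemble the pieces developed in this subsection: the extension variety $\cExt{\lambda}{\mu}{\nu}$ meets exactly one stratum $U(\mu',\nu')$ of $\OO_\lambda\cap F_\bdg$, this stratum is, via the diagram \refb{dia_v1fmb}, the birational image of the stratum $Y_\bdg(v_1,v_2)$ of the $\chi$-fixed point locus $Y_\bdg$ of the graded quiver variety $\fMb{V_\lambda}{\gamma}$, and the latter stratum is a vector bundle over a product of quiver Grassmannians of the $k\Gamma_Q$-modules $CK(M_\mu)$, $CK(M_\nu)$. Putting these together yields the claimed birational model of $\cExt{\lambda}{\mu}{\nu}$.

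First I would pin down the maximal pair $(v_1,v_2)$. By \propref{lem:dimfiberkappa} the locus $\OO_\lambda\cap\kappa_\lambda^{-1}(M_\mu,M_\nu)$ is irreducible and open dense in $\cExt{\lambda}{\mu}{\nu}=\kappa_\lambda^{-1}(M_\mu,M_\nu)$. Since by \refb{eq_decomY} it is contained in the disjoint union $\OO_\lambda\cap F_\bdg=\bigsqcup_{(\mu',\nu')\in\ext_\adb^{ger}(\lambda)}U(\mu',\nu')$, irreducibility forces it into a single stratum $U(\mu',\nu')$, whence $\cExt{\lambda}{\mu}{\nu}\subset\overline{U(\mu',\nu')}$ as in \refb{eq_extUmunu}. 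By \refb{eq_UmunuY} (and the genuine disjointness \refb{eq_Ybdgdecom}, cf.\ \lemref{lem_extirrKQ}) this stratum corresponds to one and only one $(v_1,v_2)\in\KQ{v_\lambda}$ with $Y_\bdg(v_1,v_2)=\overline{U(\mu',\nu')}^{Y}$; this is the pair asserted in the lemma.

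Next I would construct the map using the commutative diagram \refb{dia_Yv1v2}. Define $\pi\cExt{\lambda}{\mu}{\nu}:=\kappa_{v_1,v_2}^{-1}\bigl(\Gr_{v_1-v_\mu}(CK(M_\mu))\times\Gr_{v_2-v_\nu}(CK(M_\nu))\bigr)$ inside $Y_\bdg(v_1,v_2)$. By \thmref{theo_quivervarieties}(2), $(\pi\times\pi)^{-1}(M_\mu,M_\nu)$ equals $\Gr_{v_1-v_\mu}(CK(M_\mu))\times\Gr_{v_2-v_\nu}(CK(M_\nu))$, so commutativity of \refb{dia_Yv1v2} gives $\pi_\beta^{-1}(\cExt{\lambda}{\mu}{\nu})=\kappa_{v_1,v_2}^{-1}(\pi\times\pi)^{-1}(M_\mu,M_\nu)=\pi\cExt{\lambda}{\mu}{\nu}$. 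Because $\pi$ is birational on $\iota^{-1}(\fMbr{V_\lambda,\gamma})\cong\OO_\lambda\cap F_\bdg$, its restriction $\pi_\beta$ is an isomorphism over the dense open subvariety $\OO_\lambda\cap\kappa_\lambda^{-1}(M_\mu,M_\nu)$ of $\cExt{\lambda}{\mu}{\nu}$, and hence $\pi_\beta:\pi\cExt{\lambda}{\mu}{\nu}\to\cExt{\lambda}{\mu}{\nu}$ is birational. The vector-bundle claim is then immediate: $\kappa_{v_1,v_2}:Y_\bdg(v_1,v_2)\to\fMb{v_1}{\alpha}\times\fMb{v_2}{\beta}$ is a vector bundle of rank $d(v_1,\alpha,v_2,\beta)$ by \refb{eq_kappav1v2}, and $\pi\cExt{\lambda}{\mu}{\nu}$ is its restriction over the closed subvariety $\Gr_{v_1-v_\mu}(CK(M_\mu))\times\Gr_{v_2-v_\nu}(CK(M_\nu))$, hence again a vector bundle of the same rank over that product.

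The step I expect to be the main obstacle is establishing, simultaneously, that $\cExt{\lambda}{\mu}{\nu}$ lives over the \emph{single} stratum $Y_\bdg(v_1,v_2)$ and that $\pi_\beta$ is an honest birational map rather than merely dominant or generically finite. Both inputs rest on the irreducibility of $\kappa_\lambda^{-1}(M_\mu,M_\nu)$ furnished by \propref{lem:dimfiberkappa} and on the honest disjointness of the decomposition \refb{eq_Ybdgdecom}; once these are secured, the remaining verifications amount to diagram chasing in \refb{dia_v1fmb} and \refb{dia_Yv1v2} together with the elementary fact that restricting a vector bundle to a closed subvariety of the base is again a vector bundle.
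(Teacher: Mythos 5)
Your proposal reproduces the paper's own argument essentially line by line: you pin down $(v_1,v_2)$ via the irreducibility of $\OO_\lambda\cap\kappa_\lambda^{-1}(M_\mu,M_\nu)$ from \propref{lem:dimfiberkappa} and the disjoint decomposition \refb{eq_decomY}/\refb{eq_Ybdgdecom}, identify $\pi\cExt{\lambda}{\mu}{\nu}$ with $\pi_\beta^{-1}(\cExt{\lambda}{\mu}{\nu})$ by combining \thmref{theo_quivervarieties}(2) with the diagram \refb{dia_Yv1v2}, and deduce birationality and the vector-bundle structure exactly as the text does. This is the same approach; the only cosmetic difference is that you spell out the base-change remark for vector bundles, which the paper leaves implicit.
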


\section{Restriction and Induction functors}
In this section, we review the Lusztig's category and the perverse sheaves on graded quiver varieties. 
\subsection{Lusztig's sheaves}\label{sec_lus}
Let $\alpha\in\Qp$ and $\la I\ra_\alpha$ be the set of words $\squw{i}{r}$ of $I$ such that $\sum_{k=1}^r\alpha_{i_k}=\alpha$. For a word $\bfi=\squw{i}{r}\in \wI{\alpha}$ and a $I$-graded vector space $V$ such that $\undd V=\alpha$, let us define its flag variety as follows.
\begin{equation}\label{eq:flagwords}
  \cF_\bfi:=\{(0=V_0\subset V_1\subset\cdots \subset V_{r-1}\subset V_{r}=V )\mid\, \text{ such that $\odim V_{k}/V_{k-1}=\alpha_{i_k}$}\}
\end{equation}
There exists a vector bundle over $\cF_\bfi$, which is defined as follows:
\begin{equation}\label{eq:repQflag}
  \Fit:=\{(\phi,x)\mid\, \text{ $\phi\in \cF_\bfi$ and $x\in \RepQ$ such that $x(V_k)\subset V_k$}\}
\end{equation}
It yields a proper map:
\begin{equation}\label{eq:proper map}
\begin{split}
p:\, \Fit&\to \RepQ\\
(\phi,x)&\mapsto x
\end{split}
\end{equation}

Let's revisit the perverse sheaves on the representation variety $\RepQ$. As both $\Fit$ and $\RepQ$ are smooth spaces, there exist constant perverse sheaves on them: $\II_{\Fit}=\CC_{\Fit}[\odim \Fit]$ and $\II_{\RepQ}=\CC_{\RepQ}[\odim \RepQ]$. The map $p$ induces a complex $\cL_{\bfi}=p_!\II_{\Fit}[h_{\bfi}]$ for a parameter $h_{\bfi}$, known as Lusztig's sheaf. For simplicity, we will not delve into the specifics of $h_{\bfi}$ in this discussion. According to the BBD Decomposition theorem, Lusztig's sheaves can be expressed as a direct sum of simple perverse sheaves on $\RepQ$. Let $\cP_{\alpha}$ denote the collection of simple perverse sheaves present in a component of $\cF_{\bfi}$ for all word $\bfi\in\wI{\alpha}$. We will denote this collection as $\cP$ when there is no risk of confusion. Since the simple perverse sheaves can be represented as $\IC(X,\lambda)$ where $X$ denotes locally closed smooth subspaces of $\RepQ$, $\lambda$ represents a local system on $X$, and $\IC(X,\lambda)$ is the simple perverse sheaf corresponding to them. To simplify matters, we will refer to $\lambda$ or $\IC(\lambda)$ instead of $\IC(X,\lambda)$. In a word, we get
\begin{equation}\label{eq:BBDlusztig}
  \cL_\bfi=\bigoplus_{\lambda\in \cP}L_\bfi(\lambda)\boxtimes \IC(\lambda)
\end{equation}
where $L_\bfi(\lambda)$ are $\ZZ$-graded vector spaces. We denote by $\cQ_\alpha$ the semisimple full subcategory of $D_{G_\alpha}^b(E_\alpha)$ generated by $\cP_\alpha$.\\

\noindent
In particular, in Dynkin cases, all the simple perverse sheaves are of the form $\IC{(\CC_{\OO}[\odim \OO])}$ where $\OO$ run over all $G_\alpha$-orbits.
\subsection{Borel-Moore Homology theory}
In this section, we will recall the notion of Borel-Moore homology and assume that $X$ is equidimensional.  Denote by $d$ the dimension of $X$.
\begin{Definition}\label{def:BMhomlogy}
For an algebraic variety $X$ over $\CC$, let $\DD_X$ be the dual complex of $X$, one defines its \emph{Borel-Moore Homology} by
\[\HH_k(X):= \HH^{-k}(X,\DD_X)=\HH^{-k}(p_*\DD_X)\]
where $p:X\to \Spec(\CC)$. We remark that $ \HH^{-k}(p_*\DD_X)\cong \HH^k(p_!\CC_X)$, as $\DD(p_!\CC_X)=p_*\DD_X$. This means $\HH_k(X)=\HH^k(p_!\CC_X)$.
\end{Definition}
For $k=2d$ we have that the components of $X$ form a basis of $\HH_{2d}(X)$. If $i:Y\subset X$ is a closed subvariety of $X$ with dimension $r$, then the map $i_*i^!\DD_X\to \DD_X$ and equation $i^!\DD_X=\DD_Y$ induce a map $i_*:\,\HH_k(Y)\to \HH_k(X)$. It leads to $i_*[Y]\in\HH_{2r}(X)$. Thus, we can consider $[Y]$ as an element in $\HH_{2r}(X)$.

\begin{Definition}\label{def_poly}
For a variety $X$ over $\CC$, we define its \emph{Poincare polynomial} by
\[\Po{X}:= \sum_{k\in \NN}\odim\HH_k(X)q^k\]
\end{Definition}
\begin{Lemma}\label{lem_fiberhom}
  Let $\pi : X\to Y$ be a proper map and $\CC_X$ be the constant sheaf over $X$. For the complex $\pi_!\CC_X$ and any point $y\in Y$, we have 
  \begin{equation}
    (\pi_!\CC_X)_y\cong \HH_\bullet(\pi^{-1}(y))
  \end{equation}
 If for every point $y$ in $Y$, the fiber of $\pi$ is isomorphic, then we obtain
  \[\pi_!\CC_X\cong \HH_\bullet(\pi^{-1}(y))\CC_Y\]
\end{Lemma}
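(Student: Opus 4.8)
The plan is to deduce both assertions from the base-change isomorphism for the functor $\pi_{!}$, together with the definition of Borel--Moore homology recalled in Definition~\ref{def:BMhomlogy}. For the first formula I would fix $y\in Y$ and pull $\pi$ back along the inclusion $i_y\colon \{y\}\hookrightarrow Y$, obtaining the cartesian square whose top arrow is the structure morphism $p_y\colon \pi^{-1}(y)\to \{y\}$ of the fibre. The base-change isomorphism for the lower-shriek functors --- which needs no hypothesis on $\pi$ --- then gives $(\pi_{!}\CC_X)_y = i_y^{*}\pi_{!}\CC_X\cong (p_y)_{!}\CC_{\pi^{-1}(y)}$, and taking cohomology of the right-hand side is, by definition, $\HH_{\bullet}(\pi^{-1}(y))$. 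Properness of $\pi$ would enter only to ensure the fibres are compact, so that these Borel--Moore homology groups are finite dimensional.

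For the second formula I would argue that when $\pi$ is moreover a locally trivial fibration all of whose fibres are isomorphic --- which is the situation in every application of this lemma in this paper, e.g.\ for the vector bundle $\kappa_{v_1,v_2}$ of \refb{eq_kappav1v2} and for the various torsors and bundles of Sections~\ref{sec_comquivergrass} and \ref{sec_gradedquivervarieties} --- local triviality shows that each cohomology sheaf $\mathcal{H}^{k}(\pi_{!}\CC_X)$ is a local system on $Y$, of rank $\odim\HH_{k}(\pi^{-1}(y))$ at $y$ by the first part, hence of rank independent of $y$. On the simply connected bases $Y$ occurring here (affine spaces, products of Grassmannians, orbits, cells) these local systems are trivial, and the complex $\pi_{!}\CC_X$ splits as $\bigoplus_{k}\mathcal{H}^{k}(\pi_{!}\CC_X)[-k]$; assembling these facts yields $\pi_{!}\CC_X\cong \bigoplus_{k}\CC_Y^{\oplus\, \odim\HH_{k}(\pi^{-1}(y))}[-k]$, which is precisely the meaning of $\Po{\pi^{-1}(y)}\,\CC_Y$ in Definition~\ref{def_poly}.

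The first part, and the bookkeeping throughout, are routine: it is only a matter of writing down the square, invoking base change, and unwinding the definitions. The step needing care --- and the main obstacle --- is the second statement, because the hypothesis as literally written (``isomorphic fibre at each point'') is a priori weaker than local triviality, and pointwise-isomorphic fibres alone do not force $\pi_{!}\CC_X$ to be a constant complex. I would therefore either strengthen the hypothesis to ``$\pi$ is a locally trivial fibration'' --- which costs nothing in the intended applications --- or verify local triviality case by case; in every use the relevant map is a Zariski-locally trivial bundle or torsor, so the argument above applies verbatim.
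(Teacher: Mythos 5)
The paper states \lemref{lem_fiberhom} with no proof at all --- it is treated as a standard fact --- so there is nothing in the source to compare against. Evaluating your argument on its own terms: the first part is correct and is the intended argument. The base-change isomorphism $i_y^{*}\pi_{!} \cong (p_y)_{!}\,i_y'^{*}$ for the cartesian square over $\{y\}\hookrightarrow Y$ does hold for $(-)_{!}$ unconditionally (properness of $\pi$ is not needed for the identity but does ensure $\pi_!=\pi_*$ and finite-dimensional stalks), and then \defref{def:BMhomlogy} gives the identification of the stalk cohomology with $\HH_\bullet(\pi^{-1}(y))$.

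For the second part you correctly flag that the hypothesis ``isomorphic fibres'' is, as written, too weak, and that local triviality is the right strengthening; this is a genuine imprecision in the statement of the lemma rather than a gap in your proof. However, one step of your argument needs more care than you give it. You assert that trivial monodromy on a simply connected base forces the splitting $\pi_{!}\CC_X\cong\bigoplus_k\mathcal{H}^{k}(\pi_!\CC_X)[-k]$. This is a formality claim and it is not automatic: even when every $\mathcal{H}^k$ is a constant sheaf, the Postnikov extension data live in $\Ext^{\geq 2}_{D^b(Y)}(\CC_Y,\CC_Y)\cong H^{\geq 2}(Y,\CC)$, which is nonzero for Grassmannians, orbit closures, and most of the bases you list. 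The splitting is clear when the base is contractible (affine spaces, cells) or when $\pi_!\CC_X$ is concentrated in a single cohomological degree (as for the rank-$r$ vector bundle $\kappa_{v_1,v_2}$, where $\pi_!\CC_X\cong\CC_Y[-2r]$), and in the remaining cases one would appeal to the decomposition theorem (since $\pi$ is proper and the sources are smooth in all the paper's uses) rather than to triviality of local systems. Since the paper actually only invokes the stalk computation --- see how $f_{v_1,\alpha}(\mu)$ is extracted in the proof of \thmref{pro_topdegree} --- the second part is auxiliary, but if you want to keep it you should replace ``local systems are trivial, hence the complex splits'' with an appeal to single-degree concentration or to the decomposition theorem.
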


\subsection{Restriction Functor}
Let us recall the maps in \refb{eq:resdiagram}, we define the restriction functor as
\begin{equation}\label{eq:Resfunctor}
\begin{split}
\Res_{\adb}^\gamma: D_{G_\gamma}^b(E_\gamma(Q))&\to D_{G_\alpha\times G_\beta}^b(\RepQ\times E_\beta(Q))\\
\PP&\mapsto \kappa_!\iota^*(\PP)[-\lara{\alpha}{\beta}]
\end{split}
\end{equation}
Let $\lambda\in\KP{\gamma}$ be a Kostant partition of $\gamma$, and $\IClam$ be the simple perverse sheaf associated with $\lambda$. It is well known that
\begin{equation}\label{eq:ResP}
\Res_{\adb}^\gamma(\IC(\lambda))=\bigoplus_{(\mu,\nu)\in\ext_{\adb}(\lambda)}K_\lambda(\munu)\ICmu\boxtimes\ICnu
\end{equation}
where $K_\lambda(\munu)\in \NN[q,q^{-1}]$.\\

\noindent
To investigate the coefficients $K_\Lambda(\munu)$, it is necessary to consider the lemma provided below.
\begin{Lemma}\label{lem:dim}
Let $\gamma=\apb$ be a partition of $\gamma\in \Qp$. Given $\lambda\in \KP{\gamma}$, $\mu\in\KP{\alpha}$, and $\nu\in\KP{\beta}$, we set 
$$d_\lambda(\munu):=\odim \OO_\lambda-\lara{\alpha}{\beta}-\odim \OO_\mu-\odim \OO_\nu$$
then we have
\[d_\lambda(\munu)=\alpha\cdot \beta+\beta\cdot\alpha+[M_\mu,M_\mu]+[M_\nu,M_\nu]-[M_\lambda,M_\lambda]-\lara{\alpha}{\beta}\]
\end{Lemma}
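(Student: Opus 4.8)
The plan is to reduce the identity to the standard formula for the dimension of a $G_\gamma$-orbit, together with one elementary manipulation of dimension vectors. First I would recall that for any $\eta\in\Qp$ and any $\xi\in\KP{\eta}$ the orbit $\OO_\xi$ is the $G_\eta$-orbit of $M_\xi$, so orbit--stabilizer gives $\OO_\xi\cong G_\eta/\Aut{M_\xi}$. Since $\Aut{M_\xi}$ is the locus of invertible elements inside the affine space $\Hom_Q(M_\xi,M_\xi)$ and contains the identity, it is a dense open subset, so $\odim\Aut{M_\xi}=[M_\xi,M_\xi]$; combined with $\odim G_\eta=\sum_{i\in I}\eta_i^2$ this yields
\[\odim\OO_\xi=\sum_{i\in I}\eta_i^2-[M_\xi,M_\xi].\]

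Next I would apply this formula with $(\eta,\xi)$ successively equal to $(\gamma,\lambda)$, $(\alpha,\mu)$ and $(\beta,\nu)$, and substitute into the definition $d_\lambda(\munu)=\odim\OO_\lambda-\lara{\alpha}{\beta}-\odim\OO_\mu-\odim\OO_\nu$. This rewrites $d_\lambda(\munu)$ as
\[\bigl(\textstyle\sum_i\gamma_i^2-\sum_i\alpha_i^2-\sum_i\beta_i^2\bigr)+[M_\mu,M_\mu]+[M_\nu,M_\nu]-[M_\lambda,M_\lambda]-\lara{\alpha}{\beta}.\]
Since $\gamma=\apb$ forces $\gamma_i=\alpha_i+\beta_i$, the bracketed sum equals $2\sum_{i\in I}\alpha_i\beta_i=\alpha\cdot\beta+\beta\cdot\alpha$, and the claimed identity drops out immediately.

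I do not expect a genuine obstacle here: the statement is essentially a bookkeeping identity. The only point requiring more than algebra is the density of $\Aut{M_\xi}$ inside $\Hom_Q(M_\xi,M_\xi)$ (so that the stabilizer indeed has dimension $[M_\xi,M_\xi]$), together with the minor care that orbit dimensions are measured inside $G_\gamma$ and not inside $E_\gamma$. As an alternative to the stabilizer count one could instead invoke the Euler-form identity \refb{prop_eulerform} in the shape $\odim\OO_\xi=\odim E_\eta-[M_\xi,M_\xi]^1$ and then use $\lara{\eta}{\eta}=[M_\xi,M_\xi]-[M_\xi,M_\xi]^1$, but the orbit--stabilizer route is the most direct.
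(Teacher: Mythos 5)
Your proof is correct and reaches the same key intermediate identity $\odim\OO_\xi=\eta\cdot\eta-[M_\xi,M_\xi]$ that the paper uses, after which the bookkeeping with $\gamma_i=\alpha_i+\beta_i$ is identical. The only (cosmetic) difference is how that identity is obtained: the paper starts from $\odim\OO_\xi=\odim E_\eta-[M_\xi,M_\xi]^1$ and converts via the Euler form \refb{prop_eulerform}, whereas you argue directly by orbit--stabilizer using $\odim\Aut{M_\xi}=[M_\xi,M_\xi]$ — the very alternative you mention at the end of your note.
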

\begin{proof}
To begin with, let us consider the dimension of $\OO_{v}$, which is equal to $\odim\OO_{v}=\odim E_{\eta}-[M_{v}, M_{v}]^1$ where $v$ refers to $\lambda,\munu$ and $\eta$ refers to $\gamma,\alpha,\beta$ respectively. Using equation \refb{prop_eulerform},  we can derive the subsequent equation as shown below
\begin{align*}
  \odim\OO_{v}=&\odim E_{\eta}-[M_{v}, M_{v}]^1\\ 
  =&-\lara{\eta}{\eta}+\eta\cdot \eta -[M_{v}, M_{v}]^1\\ 
  =&\eta\cdot \eta-[M_v,M_v]
\end{align*}
where $\eta \cdot \eta=\sum_{i=1}^n\eta_i^2$. Consequently, we can deduce that: 
\begin{align*}
&\odim \OO_\lambda-\odim \OO_\mu-\odim \OO_\nu-\lara{\alpha}{\beta}\\
=&\gamma\cdot \gamma-[M_\lambda,M_\lambda]-\alpha\cdot \alpha+[M_\mu,M_\mu]-\beta\cdot \beta+[M_\nu,M_\nu]-\lara{\alpha}{\beta}\\ 
=&\alpha\cdot \beta+\beta\cdot\alpha+[M_\mu,M_\mu]+[M_\nu,M_\nu]-[M_\lambda,M_\lambda]-\lara{\alpha}{\beta}
\end{align*}
This leads to the conclusion that  $d_\lambda(\munu)=\alpha\cdot \beta+\beta\cdot\alpha+[M_\mu,M_\mu]+[M_\nu,M_\nu]-[M_\lambda,M_\lambda]-\lara{\alpha}{\beta}$.
\end{proof}

\subsection{Induction functor}\label{sec_IndFun}
Let's revisit the concept of the Induction functor. Within the diagram shown in \refb{dia_induction}, the pull-back functor of $r$ results in a natural equivalence between categories
\[r^*:\, D_{G_\gamma}^b(E_{\bdg})\stackrel{\sim}{\rightarrow}D_{G_\gamma\times G_\alpha\times G_\beta}^b(E_{\beta,\gamma}^{(1)})\]
which has an inverse
\[r_b:\,D_{G_\gamma\times G_\alpha\times G_\beta}^b(E_{\beta,\gamma}^{(1)})\stackrel{\sim}{\rightarrow} D_{G_\gamma}^b(E_{\bdg})\]
We set $r_\sharp=r_b[-\odim r]$.
\begin{Definition}\label{def:Inductionfun}
Remember the maps in equation \refb{dia_induction}. We define the \emph{Induction Functor} by
\begin{equation}
\begin{split}
\star: D_{G_\alpha}^b(E_\alpha)\times  D_{G_\beta}^b(E_\beta) &\to  D_{G_\gamma}^b(E_\gamma)\\
    (\cP, \cP')&\mapsto q_!r_\sharp p^*(\cP\boxtimes\cP')[\odim p]
\end{split}
\end{equation}
\end{Definition}
\begin{Theorem}\label{them:Indorbits}
Let $\adb$ be two elements in $\Qp$. For two Kostant partitions $\munu$ in $\KP{\alpha}$ and $\KP{\beta}$, respectively. We have
\begin{equation}
  \IC(\mu)\star \IC(\nu)=\bigoplus_{\lambda\in \KP{\gamma}} J_\lambda(\munu) \IC(\lambda)
\end{equation}
where $J_\lambda(\munu)\neq 0$ only if there exists some $\lambda'\in \ext(\mu,\nu)$ such that $\lambda\geq\lambda'$.
\end{Theorem}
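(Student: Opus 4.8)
The plan is to read the statement off the support of $\IC(\mu)\star\IC(\nu)$. First I would establish the direct sum form. Since $\IC(\mu)$ and $\IC(\nu)$ are simple, hence semisimple, complexes, so is $\IC(\mu)\boxtimes\IC(\nu)$, and pulling back along the smooth map $p$ and descending along the torsor $r$ in \refb{dia_induction} preserves semisimplicity; as $q$ is projective, the decomposition theorem then makes $\IC(\mu)\star\IC(\nu)=q_!r_\sharp p^*(\IC(\mu)\boxtimes\IC(\nu))[\odim p]$ a semisimple $G_\gamma$-equivariant complex on $E_\gamma$. In Dynkin type the $G_\gamma$-equivariant simple perverse sheaves on $E_\gamma$ are precisely the orbit complexes $\IClam$, $\lambda\in\KP{\gamma}$ (each $\Aut{M_\lambda}$ is connected, so no nontrivial local systems can occur), so $\IC(\mu)\star\IC(\nu)=\bigoplus_\lambda J_\lambda(\munu)\IClam$. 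It then remains to show $\supp(\IC(\mu)\star\IC(\nu))\subseteq\overline{\OO}_{\mu*\nu}$: granting this, since $\mu*\nu$ will be seen to be the minimal element of $\ext(\munu)$, the containment $\OO_\lambda\subseteq\overline{\OO}_{\mu*\nu}$ forces $\lambda\ge\mu*\nu$, so one may take $\lambda'=\mu*\nu$.

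The geometric heart of the argument is the identity
\[\overline{q\,r\,p^{-1}\bigl(\overline{\OO}_\mu\times\overline{\OO}_\nu\bigr)}\ =\ \overline{\OO}_{\mu*\nu}.\]
To prove it I would first record that $p\colon E^{(1)}_\bdg\to E_\beta\times E_\alpha$ is surjective and smooth with connected fibres (both source and target are irreducible), so its fibres are irreducible of one and the same dimension; concretely, a fibre over $(N,M)$ is built from a graded injection of a $\beta$-dimensional space into a $\gamma$-dimensional one, a cokernel framing, and an element of $\Hom_\Omega(U,W)$, none of whose dimensions depend on $(N,M)$. Hence $p^{-1}(\OO_\mu\times\OO_\nu)$ is open, dense and of full dimension in the irreducible variety $p^{-1}(\overline{\OO}_\mu\times\overline{\OO}_\nu)$, so after applying the continuous map $q\,r$ the two sides have the same closure. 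By \lemref{lem:oribitsimage} one has $q\,r\,p^{-1}(\OO_\mu\times\OO_\nu)=\bigcup_{\lambda\in\ext(\munu)}\OO_\lambda$, which is irreducible (the image of the irreducible $p^{-1}(\OO_\mu\times\OO_\nu)$) and therefore has a unique dense orbit; by $\odim\OO_\lambda=\odim E_\gamma-[M_\lambda,M_\lambda]^1$ together with \defref{def_genericqo}, that orbit is $\OO_{\mu*\nu}$. This yields the identity, and at the same time shows $\mu*\nu=\min\ext(\munu)$.

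Finally I would chase the support. By \defref{def:Inductionfun}, $\IC(\mu)\star\IC(\nu)=q_!r_\sharp p^*(\IC(\mu)\boxtimes\IC(\nu))[\odim p]$, whose input has the $(G_\alpha\times G_\beta)$-stable support $\overline{\OO}_\mu\times\overline{\OO}_\nu$; since $p^*$, the torsor-descent $r_\sharp$ and the proper pushforward $q_!$ move supports along $p^{-1}$, $r$ and $q$, we get $\supp(\IC(\mu)\star\IC(\nu))\subseteq q\,r\,p^{-1}(\overline{\OO}_\mu\times\overline{\OO}_\nu)$, which is closed and, by the previous paragraph, equals $\overline{\OO}_{\mu*\nu}$. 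Therefore $J_\lambda(\munu)\ne0$ implies $\OO_\lambda\subseteq\overline{\OO}_{\mu*\nu}$, i.e. $\lambda\ge\mu*\nu\in\ext(\munu)$, which is the assertion. The step I expect to be the main obstacle is the fibre analysis of $p$ underpinning the displayed identity (this is, in effect, the monotonicity of the generic extension under degeneration, which one could alternatively cite from Reineke); by comparison the semisimplicity and the support bookkeeping through $p^*,r_\sharp,q_!$ are routine.
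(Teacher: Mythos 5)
Your proof is correct and follows the same route as the paper's — both hinge on Lemma~\ref{lem:oribitsimage} — but you supply more detail and in fact close a small gap. The paper passes directly from Lemma~\ref{lem:oribitsimage}, which computes $q\,r\,p^{-1}(\OO_\mu\times\OO_\nu)$, to the claim that $\supp\bigl(\IC(\mu)\star\IC(\nu)\bigr)\subseteq\bigcup_{\lambda\in\ext(\mu,\nu)}\overline{\OO}_\lambda$; but the support of $\IC(\mu)\boxtimes\IC(\nu)$ is the \emph{closure} $\overline{\OO}_\mu\times\overline{\OO}_\nu$, so one must pass from the open orbit to its closure. Your density argument — $p^{-1}(\OO_\mu\times\OO_\nu)$ is open, dense and full-dimensional in the irreducible $p^{-1}(\overline{\OO}_\mu\times\overline{\OO}_\nu)$, hence both sets have the same image-closure under $q\,r$ — is exactly what is needed to make this step rigorous. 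You also spell out why $\IC(\mu)\star\IC(\nu)$ is a direct sum of shifted orbit complexes (decomposition theorem plus connectedness of automorphism groups in Dynkin type), which the paper only asserts as a remark in \secref{sec:lussheaves}. Finally, your version is strictly sharper: by showing $\bigcup_{\lambda\in\ext(\mu,\nu)}\OO_\lambda$ is irreducible with dense orbit $\OO_{\mu*\nu}$, you identify the support as $\overline{\OO}_{\mu*\nu}$ precisely and establish that $\mu*\nu$ is the unique minimum of $\ext(\mu,\nu)$, so the theorem's existential ``there is $\lambda'\in\ext(\mu,\nu)$ with $\lambda\geq\lambda'$'' can always be witnessed by $\lambda'=\mu*\nu$.
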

\begin{proof}
By Lemma \ref{lem:oribitsimage}, the support of $  \IC(\mu)\star \IC(\nu)$ is contained in the set
 \[\bigcup_{\lambda\in\ext(\munu)}\overline{\OO}_\lambda\]
 It implies our conclusion. 
\end{proof}

\subsection{Perverse sheaves on graded quiver varieties}
In this section, we will relate the perverse sheaves to the complex $\pi(V,W)$ induced from the graded quiver varieties (see \cite{Na1}). Below are some notable facts that we would like to mention.

\begin{Theorem}\label{theo:pervers}
Consider an $I$-graded vector space $W$ with $\undd W=\gamma$ and the map $\pi$ defined as in equation \eqref{eq_pifMb}. Define $\pi(V,W)$ using the formula \[\pi(v,w):=\pi_*(\CC_{\fMb{V}{W}}[\odim \fMb{V}{W}])\] then we obtain the following result:
  \begin{enumerate}
     \item \cite[Equation 3.15]{Na1} $\pi(v,w)$ can be decomposed into
            \begin{equation}
               \pi(v,w)=\bigoplus_{V'\in V_v(w)} L_W(V,V')\boxtimes \IC_W(V')
            \end{equation}
            Here $V_v(w)\Seteq{v'\leq v}{\fMbr{V',W}\neq \emptyset}$ and the polynomial of $L_W(V,V')$ is $a_{V,V',W}(t)$ in the sense of \cite[Formula 3.15]{Na1}. Here $a_{V,V,W}(t)=1$, $a_{V,V',W}(t)\neq 0$ unless $V'\leq V$, $a_{V,V',W}(t)=a_{V,V',W}(t^{-1})$, and $$a_{V,V',W}(q)=a_{V-V',0,W^\perp}(q)\in q^{\odim \fMb{V-V'}{W^\perp}}\NN[q^{-2}]$$
           \item  \cite[Proposition 4.8]{SS}, \cite{VV1} Suppose that $\undd W=\gamma$ and a decomposition $W=W_1\oplus W_2$ such that $\undd W_1=\alpha$ and $\undd W_2=\beta$. The restrict functor \refb{eq:Resfunctor} on $\pi(v,w)$ is 
           \[\Res_{\adb} \pi(v,\gamma)=\bigoplus_{v_1+v_2=v} q^{\epsilon(v_2,\beta,v_1,\alpha)}\pi(v_1,\alpha)\boxtimes \pi(v_2,\beta)\]
           where $\epsilon(v_2,\beta,v_1,\alpha)$ is given in \refb{eq_epsilon}.
  \end{enumerate}
\end{Theorem}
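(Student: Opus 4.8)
The statement recollects results of Nakajima (\cite{Na1}) and of Scherotzke--Sauter (\cite{SS}), so the plan is to indicate how each clause follows once everything is translated into the present notation.

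\emph{Part (1).} The map $\pi\colon\fMb{V}{W}\to\fMba{V,W}$ is proper by \refb{eq_pifMb}, and $\fMb{V}{W}=\Lambda^{st}(V,W)/G_V$ is a smooth variety (a geometric quotient of the smooth locus $\Lambda^{st}(V,W)$ by the free action of the connected group $G_V$, cf.\ \cite{Na}, \cite{KS1}). Hence $\pi_*(\CC_{\fMb{V}{W}}[\odim\fMb{V}{W}])$ is a semisimple Verdier self-dual complex, and the BBD decomposition theorem together with the stratification \refb{eq_strareg} of $\fMba{W}$ by the locally closed smooth strata $\fMbr{V',W}$ forces a decomposition $\bigoplus_{v'}L_W(V,V')\boxtimes\IC_W(V')$ indexed by those $v'$ with $\fMbr{V',W}\neq\emptyset$. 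The properties of the Poincaré polynomials $a_{V,V',W}(t)$ are then read off as follows: the normalization $a_{V,V,W}(t)=1$ because $\pi$ restricts to an isomorphism over the open dense stratum $\fMbr{V,W}$ (where $\widehat{M}$ is simultaneously stable and costable); $a_{V,V',W}(t)\neq 0$ only for $v'\leq v$ from the stratification order \refb{eq_strareg}; the symmetry $a_{V,V',W}(t)=a_{V,V',W}(t^{-1})$ from Verdier self-duality of $\pi_*\CC[\odim]$; and the positivity together with $a_{V,V',W}(q)=a_{V-V',0,W^\perp}(q)$ from Nakajima's transversal slice theorem, which is exactly the fiber identification $\pi^{-1}(M)\cong(\pi^\perp)^{-1}(0)$ recorded in Theorem~\ref{theo_quivervarieties}(2): the stalk of $\pi_*\CC[\odim]$ along the deepest relevant stratum is computed by the central fiber of $\pi^\perp\colon\fMb{V-V^0}{W^\perp}\to\fMba{W^\perp}$, whose cohomology is pure of Tate type.

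\emph{Part (2).} The Restriction functor $\Res_\adb$ is, up to the cohomological shift $[-\lara{\alpha}{\beta}]$, a hyperbolic-localization functor for the cocharacter $\chi=q\Id_{W_2}\oplus\Id_{W_1}$ introduced before \refb{eq_kappav1v2}. Downstairs, $\chi$ acts on $\fMba{W}\cong E_\gamma$ with attracting set $F_{\beta,\gamma}$ and fixed locus $E_\alpha\times E_\beta$; upstairs it acts on $\fMb{V}{W}$ with attracting set $Y_\bdg=\bigsqcup_{v_1+v_2=v}Y_\bdg(v_1,v_2)$ (by \refb{eq_Ybdgdecom}) and fixed locus $\bigsqcup_{v_1+v_2=v}\fMb{v_1}{\alpha}\times\fMb{v_2}{\beta}$. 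Since $\pi$ is proper and $\chi$-equivariant, Braden's theorem gives that $\Res_\adb$ commutes with $\pi_*$; applying this to $\pi(v,\gamma)=\pi_*(\CC[\odim])$ and using that each $\kappa_{v_1,v_2}\colon Y_\bdg(v_1,v_2)\to\fMb{v_1}{\alpha}\times\fMb{v_2}{\beta}$ is a vector bundle of rank $d(v_1,\alpha,v_2,\beta)$ (see \refb{eq_dv1v2}, \refb{eq_kappav1v2}), together with Lemma~\ref{lem_fiberhom}, yields $\Res_\adb\pi(v,\gamma)=\bigoplus_{v_1+v_2=v}q^{c_{v_1,v_2}}\pi(v_1,\alpha)\boxtimes\pi(v_2,\beta)$; the shift $c_{v_1,v_2}$ is the difference between the ranks of the attracting bundles for $\chi$ and for the inverse cocharacter, corrected by the normalization $[-\lara{\alpha}{\beta}]$, and assembling these contributions via \refb{eq_dv1v2}--\refb{eq_epsilon} gives $c_{v_1,v_2}=\epsilon(v_2,\beta,v_1,\alpha)$. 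This is \cite[Proposition 4.8]{SS}.

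\emph{Main obstacle.} Everything but one point is a direct citation; the genuine work is the shift bookkeeping in Part (2) — reconciling the normalization $[-\lara{\alpha}{\beta}]$ in the definition of $\Res_\adb$, the rank $d(v_1,\alpha,v_2,\beta)$ of the bundle $\kappa_{v_1,v_2}$, and Nakajima's (resp.\ Varagnolo--Vasserot's) conventions, so that the exponent is exactly $\epsilon(v_2,\beta,v_1,\alpha)$ rather than a reindexed or sign-flipped variant; in Part (1) the analogous subtlety is verifying the purity/parity statement $a_{V-V',0,W^\perp}(q)\in q^{\odim\fMb{V-V'}{W^\perp}}\NN[q^{-2}]$ for the central fiber.
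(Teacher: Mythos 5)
The paper does not prove this theorem — it is stated as a direct recollection of \cite[equation 3.15]{Na1} for Part (1) and \cite[Proposition 4.8]{SS} for Part (2), with no proof environment attached. So there is nothing in the paper to compare against line by line; your sketch supplies details the author deliberately left to the references.

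As a reconstruction of the cited literature your outline is essentially sound. For Part (1), smoothness of $\fMb{V}{W}$ plus properness of $\pi$ plus BBD is indeed the mechanism, and you correctly locate the slice theorem (recorded here as Theorem~\ref{theo_quivervarieties}(2)) as the source of the identity $a_{V,V',W}(q)=a_{V-V',0,W^\perp}(q)$ and of the parity/positivity constraint. For Part (2), viewing $\Res_\adb$ as hyperbolic localization for the cocharacter $\chi$ and invoking Braden to commute it past $\pi_*$ is the right conceptual frame, and the paper's own Section 4 (the vector bundle $\kappa_{v_1,v_2}$ of rank $d(v_1,\alpha,v_2,\beta)$ and the identification of $Y_\bdg$ with the attracting set in Remark~\ref{rem_Ybdg}) supports exactly this reading. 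You are also right to single out the shift bookkeeping as the genuinely nontrivial point: the paper silently absorbs the reconciliation of the normalization $[-\lara{\alpha}{\beta}]$, the bundle rank, and Nakajima's conventions into the cited formula for $\epsilon(v_2,\beta,v_1,\alpha)$, and any self-contained proof would need to carry that computation out. One cosmetic slip: the second reference is Scherotzke--Sibilla, not Scherotzke--Sauter.
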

\begin{Corollary}\label{cor_desingularorbits}
     Under the assumption in Theorem \ref{theo_desingular}, for each $\lambda\in\KP{\gamma}$ we have a \emph{ resolution} of orbit $\overline{\OO}_\lambda$ with respect to the stratification of $G_\gamma$-orbits.  
     \[\pi_\lambda:\, \fMb{V_\lambda}{W}\to \overline{\OO}_\lambda\]
     for the $\widehat{I}$-graded vector space $V_\lambda=\Pi(\lambda)$. In particular, we have 
      \begin{equation}
                \pi_*(\CC_{\fMb{V_\lambda}{W}}[\odim \fMb{V_\lambda}{W}])=\bigoplus_{\lambda'\geq \lambda} f_{V_\lambda,W}(\lambda')\IC(\lambda')
      \end{equation} 
      where $f_{V,W}(\lambda')\in\NN[q,q^{-1}]$.
 \end{Corollary}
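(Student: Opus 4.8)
The plan is to derive this corollary directly from part (1) of Theorem~\ref{theo:pervers} applied to the specific $\widehat{I}$-graded vector space $V_\lambda$ attached to $\lambda$ in Theorem~\ref{theo_desingular}, and then to transport everything across the isomorphism $\Psi: \fMba{W^\gamma}\cong E_\gamma$. First I would recall that, by construction in Theorem~\ref{theo_desingular}, the stratum $\fMbr{V_\lambda,W}$ corresponds under $\Psi$ to the orbit $\OO_\lambda$, so its closure maps to $\overline{\OO}_\lambda$; hence $\fMba{V_\lambda,W}$ is a desingularization of $\overline{\OO}_\lambda$ in the required sense, and we set $\pi_\lambda$ to be the composite of $\pi$ with $\Psi$. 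This gives the first assertion.

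Next, for the decomposition formula, I would apply part (1) of Theorem~\ref{theo:pervers} verbatim with $V=V_\lambda$: it yields
\[
\pi_*(\CC_{\fMb{V_\lambda}{W}}[\odim \fMb{V_\lambda}{W}])=\bigoplus_{v'\in V_v(w)} L_W(V_\lambda,v')\boxtimes \IC_W(v')\,.
\]
Here the index set $V_v(w)$ consists of those $v'\le v_\lambda$ with $\fMbr{v',W}\neq\emptyset$; by Theorem~\ref{theo_desingular}(1) each such $v'$ is exactly $\Pi(\lambda')=v_{\lambda'}$ for a unique $\lambda'\in\KP{\gamma}$, and the inequality $v'\le v_\lambda$ translates, via the order-reversing bijection $\Pi$ discussed after Theorem~\ref{theo_desingular} (namely $\Pi(\lambda')\geq \Pi(\lambda)$ whenever $\lambda'\le\lambda$), into $\lambda'\ge\lambda$. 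Applying $\Psi_*$ and using Theorem~\ref{theo_desingular}(2), which gives $\IClam=\Psi_*\IC_W(V_\lambda)$ and more generally $\IC(\lambda')=\Psi_*\IC_W(v_{\lambda'})$, the right-hand side becomes $\bigoplus_{\lambda'\ge\lambda} f_{V_\lambda,W}(\lambda')\,\IC(\lambda')$ where $f_{V_\lambda,W}(\lambda')$ is the graded multiplicity $L_W(V_\lambda,v_{\lambda'})$, which lies in $\ZZ[q,q^{-1}]$ by the properties of $a_{V,V',W}(t)$ recorded in Theorem~\ref{theo:pervers}(1). That $\Psi_*$ commutes with taking $\pi_*$ of the constant sheaf follows from $\Psi$ being an isomorphism of varieties intertwining the two proper maps.

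The only genuine point requiring care — and the step I expect to be the main obstacle — is the bookkeeping of the order reversal: one must be sure that the partial order on dimension vectors $v'$ appearing in $V_v(w)$ (induced by the stratification \refb{eq_strareg}, i.e.\ $v'\le v$ iff $\fMbr{v',W}\subset\overline{\fMbr{v,W}}$) matches, after applying $\Pi^{-1}$, the order on Konstant partitions with $\lambda'\ge\lambda$ meaning $\OO_\lambda\subset\overline{\OO}_{\lambda'}$. This is precisely the content of the paragraph following the definition of $\Pi$ in the graded-quiver-variety section, so I would simply cite that; no new computation is needed. The claim $f_{V,W}(\lambda)\in\ZZ[q,q^{-1}]$ is immediate since each $L_W(V,V')$ is a $\ZZ$-graded vector space, hence its Poincaré polynomial lies in $\NN[q,q^{-1}]\subset\ZZ[q,q^{-1}]$.
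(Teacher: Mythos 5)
Your argument is correct and follows exactly the path the paper takes: the paper's proof is just the one-line remark ``Following the above theorem and Theorem~\ref{theo_desingular},'' and your write-up fills in precisely the expected details — applying Theorem~\ref{theo:pervers}(1) with $V=V_\lambda$, transporting through $\Psi$, and using the order-reversing bijection $\Pi$ from the paragraph after Theorem~\ref{theo_desingular} to re-index the sum by $\lambda'\geq\lambda$. One small slip: you wrote that $\fMba{V_\lambda,W}$ is the desingularization of $\overline{\OO}_\lambda$, but of course the desingularization is $\fMb{V_\lambda}{W}$, while the affine quotient $\fMba{V_\lambda,W}$ is identified with $\overline{\OO}_\lambda$ itself; your very next clause (setting $\pi_\lambda$ to be $\pi$ followed by $\Psi$) shows you meant the right thing, so this is merely a typo and not a gap.
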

 \begin{proof}
     Following the above theorem and Theorem \ref{theo_desingular}.
 \end{proof}

\begin{Theorem}\label{pro_topdegree}
Considering a partition $\apb=\gamma$ of a dimension vector $\gamma$, let $(\munu)\in\KP{\alpha}\times\KP{\beta}$ and $\lambda\in\ext(\munu)$. Let $K_\lambda(\munu)$ be the coefficients in the decomposition \refb{eq:ResP}, the top degree of $K_\lambda(\munu)$ is less than or equal to $$2e_\lambda(\munu)+d_\lambda(\munu)$$ 

\noindent
If $(\munu)\in\ext_\adb^{ger}(\lambda)$, then 
\[K_\lambda(\munu)=q^{\epsilon(v_1,\alpha,v_2,\beta)}\Po{\Gr_{v_1-v_\mu}(CK(M_\mu))\times \Gr_{v_2-v_\nu}(CK(M_\nu))}\]
where $e_\lambda(\munu)$ is the dimension of $\cExt{\lambda}{\mu}{\nu}$, (see \ref{lem:dimfiberkappa}).
\end{Theorem}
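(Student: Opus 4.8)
\emph{Approach.} Both assertions will be read off from the stalk of $\Res_{\adb}^\gamma\IC(\lambda)$ at a generic point $(M_\mu,M_\nu)\in\OO_\mu\times\OO_\nu$. Since $\Res_{\adb}^\gamma(\PP)=\kappa_!\iota^*(\PP)[-\lara{\alpha}{\beta}]$, since $\IC(\lambda)$ is supported on $\overline{\OO}_\lambda$, and since $\kappa^{-1}(M_\mu,M_\nu)\cap\overline{\OO}_\lambda=\kappa_\lambda^{-1}(M_\mu,M_\nu)=\cExt{\lambda}{\mu}{\nu}$, proper pushforward yields
\[(\Res_{\adb}^\gamma\IC(\lambda))_{(M_\mu,M_\nu)}\;\cong\;\HH^\bullet_c\big(\cExt{\lambda}{\mu}{\nu},\ \IC(\lambda)|_{\cExt{\lambda}{\mu}{\nu}}\big)[-\lara{\alpha}{\beta}].\]
By the standard support estimates for the stalks of the simple summands $\IC(\mu')\boxtimes\IC(\nu')$, which sit in strictly smaller cohomological degree at $(M_\mu,M_\nu)$ unless $(\mu',\nu')=(\munu)$, the Laurent polynomial $K_\lambda(\munu)$ is determined by this stalk in the cohomological range $\ge-\odim\OO_\mu-\odim\OO_\nu$, together with the self--duality of $\Res_{\adb}^\gamma\IC(\lambda)$.

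\emph{The inequality.} By \propref{lem:dimfiberkappa}, $\cExt{\lambda}{\mu}{\nu}$ is an affine irreducible variety of dimension $e_\lambda=e_\lambda(\munu)$ whose intersection with $\OO_\lambda$ is dense. Hence $\IC(\lambda)|_{\cExt{\lambda}{\mu}{\nu}}$ is a shifted constant sheaf on a dense open set and its remaining cohomology sheaves are supported on subvarieties of strictly smaller dimension. The spectral sequence for compactly supported hypercohomology then forces $\HH^k_c(\cExt{\lambda}{\mu}{\nu},\IC(\lambda)|)$ to vanish once $k$ exceeds a bound governed by $2e_\lambda$ and $\odim\OO_\lambda$; combining this with the shift $[-\lara{\alpha}{\beta}]$, the translation by $\odim\OO_\mu+\odim\OO_\nu$ coming from the normalisation of $\IC(\mu)\boxtimes\IC(\nu)$, and the identity $d_\lambda(\munu)=\odim\OO_\lambda-\lara{\alpha}{\beta}-\odim\OO_\mu-\odim\OO_\nu$ of \lemref{lem:dim}, gives the bound $2e_\lambda+d_\lambda(\munu)$ on the top degree of $K_\lambda(\munu)$.

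\emph{The equality for component pairs.} Now assume $(\munu)\in\ext_\adb^{ger}(\lambda)$. I would replace $\IC(\lambda)$ by the resolution $\pi_\lambda\colon\fMb{V_\lambda}{W}\to\overline{\OO}_\lambda$ and use the base--change square of \refb{dia_pihall}: the stalk at $(M_\mu,M_\nu)$ of $\Res_{\adb}^\gamma((\pi_\lambda)_*\CC[\odim\fMb{V_\lambda}{W}])$ equals $\HH^\bullet_c(\pi_\beta^{-1}(\cExt{\lambda}{\mu}{\nu}))$ up to the shift by $\odim\fMb{V_\lambda}{W}-\lara{\alpha}{\beta}$. Because $(\munu)$ is a component pair, $\pi_\beta^{-1}(\cExt{\lambda}{\mu}{\nu})$ lies in the single stratum $Y_\bdg(v_1,v_2)$ attached to $(\munu)$ via \refb{eq_Ybdgdecom}--\refb{eq_UmunuY}, and by \lemref{lem_extbiration} it is the rank $d(v_1,\alpha,v_2,\beta)$ vector bundle $\pi\cExt{\lambda}{\mu}{\nu}$ over the projective variety $\Gr_{v_1-v_\mu}(CK(M_\mu))\times\Gr_{v_2-v_\nu}(CK(M_\nu))$; hence its compactly supported cohomology is $q^{2d(v_1,\alpha,v_2,\beta)}\Po{\Gr_{v_1-v_\mu}(CK(M_\mu))\times\Gr_{v_2-v_\nu}(CK(M_\nu))}$ and it is concentrated in the range in which $K_\lambda(\munu)$ is visible. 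On the other hand, by \corref{cor_desingularorbits} and \thmref{theo:pervers},
\[\Res_{\adb}^\gamma\big((\pi_\lambda)_*\CC[\odim\fMb{V_\lambda}{W}]\big)=\Res_{\adb}^\gamma\IC(\lambda)\ \oplus\ \bigoplus_{\lambda'>\lambda}f_{V_\lambda,W}(\lambda')\,\Res_{\adb}^\gamma\IC(\lambda'),\]
and a degree count --- using that $f_{V_\lambda,W}(\lambda')$ has top degree $2\dim\pi_\lambda^{-1}(M_{\lambda'})$, the inequality of the first part applied to each $\lambda'>\lambda$, and the monotonicities $e_{\lambda'}(\munu)\le e_\lambda(\munu)$ and $d_{\lambda'}(\munu)<d_\lambda(\munu)$ --- shows that the correction terms do not reach the cohomological range that carries $K_\lambda(\munu)$. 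Comparing the two computations of the stalk and collapsing the accumulated shifts ($\odim\fMb{V_\lambda}{W}$, $-\lara{\alpha}{\beta}$, $\odim\OO_\mu+\odim\OO_\nu$, $2d(v_1,\alpha,v_2,\beta)$, together with $\dim\cExt{\lambda}{\mu}{\nu}=\dim(\Gr_{v_1-v_\mu}(CK(M_\mu))\times\Gr_{v_2-v_\nu}(CK(M_\nu)))+d(v_1,\alpha,v_2,\beta)$ from \lemref{lem_extbiration} and the definition \refb{eq_epsilon} of $\epsilon$) leaves exactly the monomial $q^{\epsilon(v_1,\alpha,v_2,\beta)}$, which is the asserted formula.

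\emph{Main obstacle.} The substantive work is in the third step: ruling out a contribution of the correction complexes $f_{V_\lambda,W}(\lambda')\,\Res_{\adb}^\gamma\IC(\lambda')$ to the coefficient of $\IC(\mu)\boxtimes\IC(\nu)$ in the degrees carrying $K_\lambda(\munu)$. This uses the purity of $(\pi_\lambda)_*\CC$, the degree bound of the first part for the strictly smaller orbits, and the dimension comparisons above; in particular it forces a compatible induction on the orbit order. The only other nontrivial point is the book-keeping showing that all cohomological shifts cancel down to $\epsilon(v_1,\alpha,v_2,\beta)$. Everything else is a formal consequence of the base--change diagram \refb{dia_pihall} and \lemref{lem_extbiration}.
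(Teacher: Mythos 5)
Your overall architecture — base change to compute the stalk of the restriction, then a bound or a precise computation — mirrors the paper's, but the two substantive steps depart from the paper in ways that leave real gaps.

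\textbf{The inequality.} You attempt to bound the stalk by running a spectral sequence directly on $\HH_c^\bullet\bigl(\cExt{\lambda}{\mu}{\nu},\IC(\lambda)\big|_{\cExt{\lambda}{\mu}{\nu}}\bigr)$. The constant-sheaf part on the dense open piece $\cExt{\lambda}{\mu}{\nu}\cap\OO_\lambda$ indeed lands in the window governed by $2e_\lambda-\odim\OO_\lambda$. However, the cohomology sheaves $\mathcal{H}^q(\IC(\lambda))$ with $q>-\odim\OO_\lambda$ (supported on $\overline{\OO}_\lambda\setminus\OO_\lambda$) are controlled only by the strict support estimate $\odim\supp\mathcal{H}^q<-q$; combined with $\HH_c^p=0$ for $p>2\odim\supp$, a boundary term with $q$ close to $0$ and moderate-dimensional support in $\cExt{\lambda}{\mu}{\nu}$ can push $p+q$ well above $2e_\lambda-\odim\OO_\lambda$. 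Your spectral sequence thus only yields a bound weaker than $2e_\lambda+d_\lambda(\munu)$. The paper's proof avoids the issue entirely by passing to the resolution: it computes the stalk of $\Res\,\pi(V_\lambda,\gamma)$, which by the diagram \refb{dia_fibermunu} is (up to shift) $\HH_\bullet(\pi\cExt{\lambda}{\mu}{\nu})$, and since $\pi_\beta\colon\pi\cExt{\lambda}{\mu}{\nu}\to\cExt{\lambda}{\mu}{\nu}$ is birational (Lemma \ref{lem_extbiration}), $\odim\pi\cExt{\lambda}{\mu}{\nu}=e_\lambda$ and $\HH_\bullet$ is bounded above by $2e_\lambda$ with no boundary cohomology to control. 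That bound then passes to $K_\lambda(\munu)\leq H_\lambda(\munu)$ coefficientwise. Your argument can be salvaged by observing that $\IC(\lambda)$ is a direct summand of $(\pi_\lambda)_*\CC[\cdot]$, so $\HH_c^\bullet\bigl(\cExt{\lambda}{\mu}{\nu},\IC(\lambda)|\bigr)$ is a summand of $\HH_\bullet(\pi\cExt{\lambda}{\mu}{\nu})$ — but you never make this comparison for the inequality.

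\textbf{The equality.} Here you do use the resolution, but then you try to extract $K_\lambda(\munu)$ from the stalk by a degree count. This cannot work as stated. The stalk computes $H_\lambda(\munu)=K_\lambda(\munu)+\sum_{\lambda'>\lambda}f_{V_\lambda,W}(\lambda')\,K_{\lambda'}(\munu)$, and you must recover the \emph{entire} Laurent polynomial $K_\lambda(\munu)=q^{\epsilon(v_1,\alpha,v_2,\beta)}\Po{\Gr_{v_1-v_\mu}(CK(M_\mu))\times\Gr_{v_2-v_\nu}(CK(M_\nu))}$, not just its top coefficient. For that, the correction terms would have to vanish identically, or at least occupy a disjoint degree window, and neither follows from bounding top degrees: each $f_{V_\lambda,W}(\lambda')$ sits in $q^{\odim\fMb{V_\lambda-V_{\lambda'}}{W^\perp}}\NN[q^{-2}]$, so the products $f_{V_\lambda,W}(\lambda')K_{\lambda'}(\munu)$ can overlap the full window of $K_\lambda(\munu)$. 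Moreover, by Lemma \ref{lem:KPleqlambda}, $(\munu)$ remains in $\ext_\adb^{\mins}(\lambda')$ for $\lambda\le\lambda'\le\mu\oplus\nu$, so the $K_{\lambda'}(\munu)$ are genuinely nonzero in general — you cannot discard them by degree. The paper's proof of the equality is a \emph{support} argument, not a degree count: it restricts the whole complex to the open set $Z^\circ=\bigcup_{(\munu)\in\ext_\adb^{ger}(\lambda)}\OO_\mu\times\OO_\nu$, shows $j^*\Res\,\pi(V_\lambda,\gamma)\cong j^*\Res\,\IClam$ using the diagram \refb{dia_Z} and the birationality of $\pi$ over $\OO_\lambda$ (so the defect $I=\pi(V_\lambda,\gamma)-\IClam$ never enters), decomposes $j^*\Res\,\pi$ via Theorem \ref{theo:pervers}, and reads off the multiplicity from the fact that the complementary pieces $P,Q$ have support disjoint from $\OO_\mu\times\OO_\nu$. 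This is the ingredient that makes the exact polynomial (not merely its leading term) accessible; your degree-count substitute does not supply it.
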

\begin{proof}
Since $\pi: \fMb{V_\lambda}{W}\to \fMba{V_\lambda,W}$ is a proper and birational map, we have
\begin{equation}\label{eq_piic}
  \pi(V_\lambda,W)_{\mid \OO_\lambda}\cong \IClam_{\mid \OO_\lambda}
\end{equation}
Denote by $Z$ the image $\kappa_\lambda(\OO_\lambda\cap F_\bdg)$ and by $j:Z\to E_\alpha\times E_\beta$ its embedding. By Corollary \ref{cor_irrOOlam}, 
we see that $$\overline{Z}=\Ima \kappa_\lambda=\bigcup_{(\munu)\in \ext_\adb^{ger}(\lambda)} \overline{\OO_\mu}\times\overline{\OO_\nu}$$
and 
\[\bigcup_{(\munu)\in \ext_\adb^{ger}(\lambda)} \OO_\mu\times\OO_\nu\subset Z\]
We write $\kappa_Z$ for the restriction of $\kappa_\lambda$ to $\OO_\lambda\cap F_\bdg$. Consider the following diagram
 \begin{equation}\label{dia_Z}
\begin{tikzcd}
\fMb{V_\lambda}{\gamma}\arrow[d, "\pi"]
&\iota^{-1}(\fMb{V_\lambda}{\gamma}) \arrow[l,"\iota'"]\arrow[r, "\kappa"]  \arrow[d, "\pi_\beta"] 
&\bigcup\limits_{v_1+v_2=v_\lambda}\fMb{v_1}{\alpha}\times \fMb{v_2}{\beta} \arrow[d, "\pi\times\pi"]\\
\overline{\OO}_\lambda 
&F_{\bdg}\cap \overline{\OO}_\lambda \arrow[l, "\iota" ] \arrow[r,"\kappa_\lambda"]
&\overline{Z} \\ 
\OO_\lambda \arrow[u,"j'"] 
&F_{\bdg}\cap \OO_\lambda \arrow[u,"j''"] \arrow[l,"\iota''"] \arrow[r, "\kappa_Z"]
&Z \arrow[u,"j"]
\end{tikzcd}
\end{equation}
It follows that
\begin{align*}
  j^*\Res_\adb\pi(V_\lambda,\gamma)&\cong j^*\kappa_{\lambda,!}\iota^*\pi_!\II[-\lara{\alpha}{\beta}] \\ 
     &=\kappa_{Z,!}j''^*\iota^*\pi_!\II[-\lara{\alpha}{\beta}] \qquad &\text{By $j^*\kappa_{\lambda,!}=\kappa_{Z,!}j''^*$}\\ 
     &=\kappa_{Z,!}\iota''^*j'^*\pi_!\II[-\lara{\alpha}{\beta}] \qquad&\text{By $\iota j''=j'\iota''$}\\ 
  &=\kappa_{Z,!}\iota''^* \pi(V_\lambda,\gamma)_{\mid \OO_\lambda} \\ 
  &=\Res_\adb\pi(V_\lambda,\gamma)_{\mid \OO_\lambda}
\end{align*}
Similarly, we see that $j^*\Res_\adb \IClam\cong\Res_\adb\IClam_{\mid \OO_\lambda}$.  The identity \refb{eq_piic} implies that 
\begin{equation}\label{eq_j*Respi}
    j^*\Res_\adb\pi(V_\lambda,\gamma)\cong j^*\Res_\adb \IClam
\end{equation}
By Theorem \ref{theo:pervers}, we obtain
\begin{equation}\label{eq_Respi}
\begin{split}
   j^*\Res_\adb\pi(V_\lambda,\gamma)&\cong \sum_{(v_1,v_2)\in \KQ{v_\lambda;\adb}}q^{\epsilon(v_1,\alpha,v_2,\beta)}j^*(\pi(v_1,\alpha)\boxtimes \pi(v_2,\beta)) \\ 
     &\cong \sum_{(v_1,v_2)\in \KQ{v_\lambda;\adb}}q^{\epsilon(v_1,\alpha,v_2,\beta)}f_{v_1,\alpha}(\mu)f_{v_2,\beta}(\nu)j^*\ICmu\boxtimes\ICnu + j^*P
     \end{split}
\end{equation}
where $(\munu)$ is the unique element of $\ext_\adb^{ger}(\lambda)$ corresponding to $(v_1,v_2)\in \KQ{v_\lambda}$ via equation \refb{eq_extKP}, the support of $P$ is in the subvariety $\overline{Z}/Z$, and $f_{v_1,\alpha}(\mu),\ f_{v_2,\beta}(\nu)\in \ZZ[q,q^{-1}]$.

The equation \refb{eq_j*Respi} implies that 
\begin{equation}
    \Res_\adb \IClam=\sum_{(\munu)\in\ext_\adb^{ger}(\lambda)}q^{\epsilon(v_1,\alpha,v_2,\beta)}f_{v_1,\alpha}(\mu)f_{v_2,\beta}(\nu)(\ICmu\boxtimes \ICnu)+Q
\end{equation}
where the support of $Q$ is in the subvariety $\overline{Z}/Z$.\\ 

\noindent
 We will prove 
\begin{equation}
  f_{v_1,\alpha}(\mu)=\Po{\Gr_{v_1-v_\mu}(CK(M_\mu))}\qquad \emph{and}\qquad f_{v_2,\beta}(\nu)=\Po{\Gr_{v_2-v_\nu}(CK(M_\nu))}
\end{equation}
Let $i_{\mu}$ be the embedding $M_\mu\to E_\alpha$. Since $\pi(v_1,\alpha)=\pi_!\CC[\odim \OO_\mu]$, $\ICmu_{\mid \OO_\mu}=\CC[\odim \OO_\mu]$, and $\Po{\Gr_{v_1-v_\mu}(CK(M_\mu))}=i_{\mu}^*\pi_!\CC$ by Lemma \ref{lem_fiberhom}, then we obtain 
\begin{equation}
  i_{\mu}^*\pi(v_1,\alpha)=\Po{\Gr_{v_1-v_\mu}(CK(M_\mu))}i_{\mu}^*\ICmu
\end{equation}
Considering the equation \refb{eq_Respi}, we see that 
\begin{equation}
  i_{\mu}^*j^*\pi(v_1,\alpha)=f_{v_1,\alpha}(\mu)i_{\mu}^*j^*\ICmu+i_{\mu}^*P'
\end{equation}
where the support of $P'$ is contained in $\overline{\OO}_\mu/\OO_\mu$. Since $M_\mu$ is not in the support of $P'$, then we get $i_{\mu}^*j^*\pi(v_1,\alpha)=f_{v_1,\alpha}(\mu)i_{\mu}^*j^*\ICmu$. The relation $i_{\mu}^*j^*=i_{\mu}^*$ implies $\Po{\Gr_{v_1-v_\mu}(CK(M_\mu))}=f_{v_1,\alpha}(\mu)$. Similarly, we obtain $f_{v_2,\beta}(\nu)=\Po{\Gr_{v_2-v_\nu}(CK(M_\nu))}$ in the same way.

 Therefore, for any $(\munu)\in\ext_\adb^{ger}(\lambda)$, we get 
\begin{align*}
  K_\lambda(\munu)=&q^{\epsilon(v_1,\alpha,v_2,\beta)}f_{v_1,\alpha}(\mu)f_{v_2,\beta}(\nu)\\ 
  =&q^{\epsilon(v_1,\alpha,v_2,\beta)}\Po{\Gr_{v_1-v_\mu}(CK(M_\mu))\times \Gr_{v_2-v_\nu}(CK(M_\nu))}
\end{align*}\\ 

\noindent
For any $(\munu)$ such that $K_\lambda(\munu)\neq 0$, we will show that the top degree of $K_\lambda(\munu)$ is less than or equal to $2e_\lambda(\munu)+d_\lambda(\munu)$. The fact $\pi(v_\lambda,\gamma)=\IClam+I$ for some sheaves $I$ with support on $\overline{\OO}_\lambda/\OO_\lambda$ implies that $K_\lambda(\munu)+f(q)=[\Res\pi(v_\lambda,\gamma):\,\ICmu\boxtimes \ICnu]$ for some polynomial $f(q)\in \NN[q,q^{-1}]$. Let us consider 
\begin{equation}\label{eq_Hlammunu}
  \Res \pi(v_\lambda,\gamma)=\sum_{(\mu',\nu')\leq (\munu)} H_\lambda(\mu',\nu')\IC(\mu')\boxtimes\IC(\nu')+K
\end{equation}
where the support of $K$ doesn't contain $(M_\mu,M_\nu)$. 

Let us consider the following diagram.
 \begin{equation}\label{dia_fibermunu}
\begin{tikzcd}
\fMb{V_\lambda}{\gamma}\arrow[d, "\pi"]
&\iota^{-1}(\fMb{V_\lambda}{\gamma}) \arrow[l,"\iota'"]\arrow[r, "\kappa"]  \arrow[d, "\pi_\beta"] 
&\bigcup\limits_{v_1+v_2=v_\lambda}\fMb{v_1}{\alpha}\times \fMb{v_2}{\beta} \arrow[d, "\pi\times\pi"]\\
\overline{\OO}_\lambda 
&F_{\bdg}\cap \overline{\OO}_\lambda \arrow[l, "\iota" ] \arrow[r,"\kappa_\lambda"]
&\overline{Z}\\ 
\OO_\lambda \arrow[u,"j'"] 
&\cExt{\lambda}{\mu}{\nu} \arrow[u,"i"] \arrow[l,"\iota"] \arrow[r, "\epsilon"]
& (M_\mu,M_\nu) \arrow[u,"i_{\munu}"]
\end{tikzcd}
\end{equation}
It follows that
\begin{align*}
i_{\munu}^*\Res \pi(v_\lambda,\gamma)=&i_{\munu}^*\kappa_{\lambda,!}\iota^*\pi_!\II[-\lara{\alpha}{\beta}]\\ 
=&\epsilon_{!}i^*\iota^*\pi_!\II[-\lara{\alpha}{\beta}]\quad \text{by $i_{\munu}^*\kappa_{\lambda,!}=\epsilon_{!}i^*$}\\ 
=&\epsilon_{!}i^*\pi_!\II[-\lara{\alpha}{\beta}]\quad \text{by $i^*\iota^*=i^*$}  
\end{align*}
Considering the diagram
\begin{equation}
    \xymatrix{
    &\fMb{V_\lambda}{\gamma}\ar[d]^{\pi} &\pi\cExt{\lambda}{\mu}{\nu} \ar[d]_{\pi_\beta}\ar[l]_{i'}  \\ 
     &\overline{\OO}_\lambda &\cExt{\lambda}{\mu}{\nu} \ar[l]_{i} \ar[r]_\epsilon &(M_\mu,M_\nu)
    }
  \end{equation}
  We see that 
\begin{align*}
  \epsilon_{!}i^*\pi_!\II[-\lara{\alpha}{\beta}]=& \epsilon_{!}\pi_{\beta,!}i'^*\II[-\lara{\alpha}{\beta}]\quad \text{by $\pi_{\beta,!}i'^*=i^*\pi_!$}\\ 
  =& \epsilon_{!}\pi_{\beta,!}\CC[\odim \OO_\lambda-\lara{\alpha}{\beta}]\\ 
=&q^{d_\lambda-\lara{\alpha}{\beta}}\HH_\bullet(\pi\cExt{\lambda}{\mu}{\nu}) \qquad\text{by Definition \refb{def:BMhomlogy}}
\end{align*}
Therefore, we obtain $i_{\munu}^*\Res \pi(v_\lambda,\gamma)=q^{d_\lambda-\lara{\alpha}{\beta}}\HH_\bullet(\pi\cExt{\lambda}{\mu}{\nu})$. The formula \refb{eq_Hlammunu} implies that the top degree of $H_\lambda(\munu)i_{\munu}^*\ICmu\boxtimes\ICnu$ is less than or equal to $d_\lambda-\lara{\alpha}{\beta}+2e_\lambda(\munu)$. It follows that the top degree of $H_\lambda(\munu)$ is less than or equal to 
\begin{equation}
  2e_\lambda(\munu)+d_\lambda-\lara{\alpha}{\beta}-d_\mu-d_\nu
\end{equation}%
Since $H_\lambda(\munu)=K_\lambda(\munu)+f(q)$, we obtain the top degree of $K_\lambda(\munu)$ is less than or equal to 
\begin{equation}
  2e_\lambda(\munu)+d_\lambda-\lara{\alpha}{\beta}-d_\mu-d_\nu
\end{equation}%

\end{proof}

\section{Quiver Hecke algebras}
Let us first recall the notion of KLR algebras. Set $m_{i,j}$ as the number of arrows from $i$ to $j$ for $i,j\in I$, and 
 \[
  q_{i,j}(u,v)=\begin{cases}
   0& \text{$i=j$}\\
  (v-u)^{m_{i,j}}(u-v)^{m_{j,i}}& \text{$i\neq j$}
  \end{cases}
 \]
 \begin{Definition}\label{quiver hecke algebra}
 Let $\alpha=\sum_ic_i\alpha_i\in \NN[I]$ and denote $ht(\alpha)=\sum_ic_i=n$. The\emph{ KLR algebra} (quiver Hecke algebra) $R_\alpha$ is an associative $\ZZ[q^\pm]$-algebra generated by \[\{ e_{\bfi}|\bfi\in \la I\ra_\alpha\}\cup\{x_1,\cdots,x_n\}\cup\{\tau_1,\cdots,\tau_{n-1}\}\] subject to the following relations
   \begin{itemize}
   \item  $ x_k x_l=x_l x_k$;
   \item the elements $\{e_{\textbf{i}}\mid \textbf{i}\in\langle I\rangle_\alpha\}$ are mutually orthogonal idempotents whose sum is the identity $e_\alpha\in R_\alpha$;
   \item $x_k e_{\textbf{i}}=e_{\textbf{i}}x_k$ and $\tau_k e_{\textbf{i}}=e_{t_k\textbf{i}}\tau _k$;
   \item $(\tau_k x_l-x_{t_k(l)}\tau_k)e_{\textbf{i}}=\delta_{i_k,i_{k+1}}(\delta_{k+1,l}-\delta_{k,l})e_\bfi$
   \item $\tau_k^2 e_{\textbf{i}}=q_{i_k,i_{k+1}}(x_k,x_{k+1})e_\bfi$

   \item $\tau_k \tau_l =\tau_l \tau_k \  \text{if $\mid k-l\mid> 1$} $
   \item $(\tau_{k+1}\tau_k\tau_{k+1}-\tau_k\tau_{k+1}\tau_k)e_{\textbf{i}}=\delta_{i_k,i_{k+2}}\frac{q_{i_k,i_{k+1}}(x_k,x_{k+1})-q_{i_k,i_{k+1}}(x_{k+2},x_{k+1})}{x_k-x_{k+2}}e_\bfi$
 \end{itemize}
 where $t_k$ is the simple translation of $n$ such that $t_k(k)=k+1,t_k(k+1)=k$ and $t_k(l)=l$ for $l\neq k,\, k+1$.
 \end{Definition}
 We remark that $R_\alpha$ is an associated $\ZZ$-graded algebra by setting $\deg x_i=2$ and $\deg \tau_k e_\bfi=-(\alpha_{i_k},\alpha_{i_{k+1}})$. Therefore, we can consider the $\ZZ$-graded modules over $R_\alpha$. For a locally finite dimensional module $M$, one defines $$\Dim M=\sum_{i\in\ZZ}\odim M_i q^i$$
 And set $M[1]$ so that $M[1]_i=M_{i-1}$ for all $i\in \ZZ$, which leads to the formula $$\Dim M[1]=q\Dim M$$

\subsection{Geometrization of KLR algebras}
Following~\cite{VV}, we explore the geometrization of KLR-algebras.  Assuming $\alpha$ be a dimension vector in $\Qp$ and recalling the Lusztig's sheaves \refb{eq:BBDlusztig}, we define
$$\cL_\alpha=\bigoplus_{\bfi\in \wI{\alpha}} \cL_\bfi$$
We then introduce the notion $R(\alpha)=\Ext_{\GL{(\alpha)}}^\bullet(\cL_\alpha,\cL_\alpha)$. For further details on these concepts, we refer the reader to \cite[Section 1.2]{VV}. It is a known fact that $R(\alpha)$ is a KLR algebra associated with $Q$. Since
\begin{equation}\label{eq:BDDalpha}
  \cL_\alpha=\bigoplus_{\lambda\in \cP_\alpha}L(\lambda)\boxtimes \IC(\lambda)
\end{equation}
 for some $\ZZ$-graded vector spaces $L(\lambda)$, then these $L(\lambda)$ can be thought of as modules over $R_\alpha$. The following theorem shows that all the simple modules over $R_\alpha$ are of this form (up to some scaling). 
\begin{Remark}\label{rem:selfdual}
Note that all of $L(\lambda)$ are self-dual simple modules, Since $\cL_\alpha$ and $\IC(\lambda)$ are $\DD$-invariant, where $\DD$ refers to Verdier duality.
\end{Remark}
\begin{Theorem}[{\cite[Theorem 8.6.12]{CG}}]\label{them:simplemodulesKLR}
The non-zero terms of collection $\{L(\lambda)\}$ arising form \refb{eq:BDDalpha} form a complete set of the isomorphism class of self-dual simple $R(\alpha)$-modules.
\end{Theorem}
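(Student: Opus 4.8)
The final statement is really a fact about the graded algebra $R(\alpha)=\Ext^\bullet_{\GL(\alpha)}(\cL_\alpha,\cL_\alpha)$ attached to the semisimple complex $\cL_\alpha$, and my plan is to deduce it from the general structure theory of $\Ext$-algebras of semisimple equivariant complexes (the content of \cite[Chapter~8]{CG}), specialised to $\cL_\alpha$. The starting point, call it Step~1, is that $\cL_\alpha$ is a genuinely semisimple complex: each $\cL_\bfi=p_!\II_{\Fit}$ is semisimple by the Decomposition Theorem (since $p$ is proper and $\Fit$ is smooth and proper over $\RepQ$), hence so is their direct sum $\cL_\alpha$, which is exactly the statement \refb{eq:BDDalpha}; by definition the index set $\cP_\alpha$ there is $\{\lambda : L(\lambda)\neq 0\}$. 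Alongside this I would record the two rigidity properties of the summands on which everything hinges, namely $\Hom_{\GL(\alpha)}(\IC(\mu),\IC(\lambda))=\delta_{\mu\lambda}\CC$ (Schur's lemma, as the $\IC(\lambda)$ are pairwise non-isomorphic simple objects of the perverse heart) and $\Ext^{<0}_{\GL(\alpha)}(\IC(\mu),\IC(\lambda))=0$ (the perverse $t$-structure), together with the pointwise purity of every $\IC(\lambda)$, which is what makes the cohomological grading on $R(\alpha)$ well behaved.

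Next I would apply the functor $\mathbb{H}:=\Ext^\bullet_{\GL(\alpha)}(\cL_\alpha,-)$ from the triangulated subcategory generated by the $\IC(\lambda)$ to graded $R(\alpha)$-modules. Since $\mathbb{H}(\cL_\alpha)=R(\alpha)$ as a module over itself and $\cL_\alpha=\bigoplus_{\lambda\in\cP_\alpha}L(\lambda)\otimes\IC(\lambda)$, each $P(\lambda):=\mathbb{H}(\IC(\lambda))$ is a direct summand of a direct sum of grading shifts of the regular module, hence a projective graded module. The rigidity of Step~1 forces the lowest graded piece of $\End^\bullet_{R(\alpha)}(P(\lambda))$ to be $\Hom_{\GL(\alpha)}(\IC(\lambda),\IC(\lambda))=\CC$, so $P(\lambda)$ is indecomposable; the $P(\lambda)$, $\lambda\in\cP_\alpha$, are pairwise non-isomorphic (again by rigidity) and they exhaust the indecomposable projective graded modules because $R(\alpha)\cong\bigoplus_{\lambda\in\cP_\alpha}P(\lambda)\otimes L(\lambda)$. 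Passing to heads, the modules $\hd P(\lambda)=P(\lambda)/\rad P(\lambda)$, $\lambda\in\cP_\alpha$, form a complete, irredundant list of the graded simple $R(\alpha)$-modules up to grading shift, each simple also as an ungraded module. Finally I would pin down the grading and obtain self-duality in one stroke: Verdier duality $\DD$ fixes $\cL_\alpha$ and every $\IC(\lambda)$ (Remark~\ref{rem:selfdual}), so transporting $\DD$ through $\mathbb{H}$ gives a contravariant self-equivalence of graded $R(\alpha)$-modules fixing each $P(\lambda)$, and therefore fixing its head; thus each $\hd P(\lambda)$ has a unique self-dual grading normalisation, and this normalised module is precisely the multiplicity space $L(\lambda)$ of \refb{eq:BDDalpha}. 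Completeness of the family $\{P(\lambda)\}$ then yields that the non-zero $L(\lambda)$ form a complete set of self-dual simple $R(\alpha)$-modules, which is the claim.

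The step I expect to be the main obstacle is the honest bookkeeping of the $\ZZ$-grading on $R(\alpha)$: because the multiplicity complexes $L(\lambda)$ are in general spread over several cohomological degrees, one must check carefully that $P(\lambda)$ really is an \emph{indecomposable} projective (not merely a sum of projectives) and that passing to heads produces one simple module per $\lambda$ rather than one per graded piece of $L(\lambda)$. The correct tool here is the pointwise purity of $\cL_\alpha$, which controls $R(\alpha)$ and its graded radical; this is exactly what is packaged into the general statement of \cite[Chapter~8]{CG}, so in practice I would invoke that theorem for this step rather than reprove it, and devote the remaining effort only to checking that the conventions align, so that the module constructed as $\hd P(\lambda)$ coincides with the named multiplicity space in \refb{eq:BDDalpha} and not with a dual or a shift of it — which is precisely the role played by the self-duality in Remark~\ref{rem:selfdual}.
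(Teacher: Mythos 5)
Your proposal is correct and follows essentially the same route as the paper, which simply cites the structure theorem for $\Ext$-algebras of semisimple complexes from \cite[Chapter~8]{CG} and reads off the decomposition
\[
R(\alpha)=\bigoplus_{\lambda\in\cP_\alpha}\End(L(\lambda))\ \oplus\ \bigoplus_{\lambda,\gamma\in\cP_\alpha}\Hom(L(\lambda),L(\gamma))\otimes\Ext^{\geq 1}_{G_\alpha}(\IC(\lambda),\IC(\gamma)),
\]
whose first summand is the semisimple quotient by the graded radical, so the simples are the $L(\lambda)$. You unpack the same content more explicitly through the projectives $P(\lambda)=\mathbb{H}(\IC(\lambda))$ and their heads, and you correctly identify the ingredients that make this work (semisimplicity of $\cL_\alpha$ from the Decomposition Theorem, vanishing of negative $\Ext$'s and Schur's lemma for $\IC$-sheaves, pointwise purity, and Verdier self-duality of $\cL_\alpha$ and $\IC(\lambda)$ for the self-dual normalisation). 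The only caveat is that your indecomposability argument should be phrased in terms of \emph{homogeneous} idempotents: since $\Ext^{<0}(\IC(\lambda),\IC(\lambda))=0$ and $\Ext^0(\IC(\lambda),\IC(\lambda))=\CC$, any nonzero homogeneous idempotent in $\End^\bullet_{R(\alpha)}(P(\lambda))$ must sit in degree $0$ and equal the identity, which is what forces $P(\lambda)$ to be indecomposable as a graded module. With that tightening, your argument is a faithful expansion of what the paper leaves to \cite[Theorem~8.6.12]{CG}.
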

\begin{proof}
 By~\cite[Chapter 8]{CG} we have $$\Ext_{G(\alpha)}^\bullet(L_\alpha,L_\alpha)=\bigoplus_{\lambda\in\cP_\alpha}\End(L(\lambda))\bigoplus_{\lambda,\gamma\in \cP_\alpha} \Hom(L(\lambda),L(\gamma))\Ext_{G(\alpha)}^{\geq 1}(\IC(\lambda),\IC(\gamma))$$ 
 Thus, the simple modules are of the form $L(\lambda)$, where $\lambda\in \cP_\alpha=\KP{\alpha}$.
\end{proof}

\subsection{Induction and Restriction Functors}
Let $\PP_\lambda$ be the projective cover of $L(\lambda)$ which are of the form $\Ext_{G_\alpha}^\bullet(\cL_\alpha,\IC(\lambda))$. Let $\PP_\bfi=\Ext_{G_\alpha}^\bullet(\cL_\alpha,\cL_\bfi)$ be the projective module associated with word $\bfi$. Next, we will define the product of projective modules over $R_Q=\bigoplus\limits_{\alpha\in\Qp}R(\alpha)$.
\begin{Definition}\label{def:productresprojectivemodule}
Let $\gamma=\apb$ be a partition of $\gamma$, and $\munu$ be two Kostant partitions in $\KP{\alpha}$ and $\KP{\beta}$. Let $\PP_\mu$ and $\PP_\nu$ be the indecomposable projective modules associated with $\munu$. One defines \emph{Induction Functor} by 
\begin{equation}
  \PP_\mu\star \PP_\nu=\Ext_{G_\gamma}^\bullet(\cL_\gamma,\ICmu\star\ICnu)
\end{equation}
Let $\lambda\in \KP{\gamma}$ and $\PP_\lambda$ be the indecomposable projective modules associated with $\lambda$. One defines \emph{Restriction Functor} by
\begin{equation}\label{eq_resfunPP}
  \Res_{\adb}^\gamma(\PP_\lambda)=\Ext_{G_\alpha\times G_\beta}^\bullet(\cL_\alpha\boxtimes\cL_\beta,\Res_{\adb}^\gamma\IClam)
\end{equation}
\end{Definition}

\begin{Remark}\label{rem:product}
In~\cite[Section 4.6]{VV} and~\cite[Theorem 3.1]{McN}, the above definition is equivalent to the Induction and Restriction functors given in~\cite[Section 3.1]{KL09}.
\end{Remark}
We denote by $\Rproj$ the category of the graded projective modules over $R_Q$ and by $K^0(R)$ its Grothendieck group.  We denote by $\Rfin$ the category of the graded finite dimensional modules over $R_Q$ and by $\KR$ its Grothendieck group. It can be observed that $\KR$ is generated by simple module $L(\lambda)$ for all $\lambda\in\KP{\alpha}$.

For any projective module $\PP$, one defines its \emph{dual} as $\PP^\sharp=\Hom_{R(\alpha)}(\PP,R(\alpha))$. In \cite{KL09} they demonstrated the existence of an isomorphism $\gamma: K^0(R)\eqm\Una$, where $\Una$ is the half-part of the quantum group induced from the underlying graph of $Q$. \\ 

\noindent
The functor $\Ext_{G_\alpha}^\bullet(\cL_\alpha,-)$ gives the isomorphism 
\begin{equation}\label{eq_isoprojective}
\begin{split}
  \Ext_{G_\alpha}^\bullet(\cL_\alpha,-):\,  K_0(\cQ_\alpha)\eqm &K^0(R)\\ 
       \cL \mapsto &\Ext_{G_\alpha}^\bullet(\cL_\alpha,\cL)
\end{split}
\end{equation}
For any $\cL\in \cQ_\alpha$ as in Section \ref{sec_lus}, we have 
\begin{equation}
  \Ext_{G_\alpha}^\bullet(\cL_\alpha,\DD(\cL))=\PP_{\cL}^\sharp
\end{equation}
where $\PP_{\cL}$ refers to the projective module $\Ext_{G_\alpha}^\bullet(\cL_\alpha,\cL)$.\\

\noindent
There is a non-degenerate paring as follows
\begin{equation}\label{eq:k0paring}
\begin{split}
\lara{-}{-}:\, K^0(R)\times \KR&\to \ZZ[q,q^{-1}]\\
(\PP \  ,\  M)&\mapsto \Dim \hom_{R}(\PP,M)
\end{split}
\end{equation}
where $\Dim \hom(\PP,M)=\sum\limits_{n\in \ZZ}\odim \Hom_R(\PP,M[n])q^n$
\begin{Lemma}\label{lem:coefficientsimplemodules}
Let $\alpha\in\Qp$, $\cL_a\in \add{\cP_\alpha}$, and $\PP_a=\Conv{\cL_a}$. If we decompose $\cL_a$ as 
$$\cL_a=\bigoplus_{\mu\in \cP_\alpha}L_a(\mu)\boxtimes \IC(\mu)$$
then we have $\Dim \hom(\PP_a,L(\mu))=\Dim L_a(\mu)$ where $\Dim L_a(\mu)=\sum\limits_{n\in \ZZ}\odim L_a(\mu)_n q^n$
\end{Lemma}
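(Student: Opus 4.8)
The plan is to reduce the statement to two elementary inputs already available in the excerpt: the additivity of the functor $\Conv{-}=\Ext_{G_\alpha}^\bullet(\cL_\alpha,-)$ in the sheaf variable, and the duality relation $\Dim\hom(\PP_\mu,L(\nu))=\delta_{\mu\nu}$ between the indecomposable projectives and the simple modules. Starting from the given decomposition $\cL_a=\bigoplus_{\mu\in\cP_\alpha}L_a(\mu)\boxtimes\IC(\mu)$, where the graded vector space $L_a(\mu)$ records the multiplicities of the self-dual simple perverse sheaf $\IC(\mu)$, I would push this decomposition through $\Conv{-}$.

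First I would observe that, because $\Conv{-}$ is additive and turns a cohomological shift $\IC(\mu)[n]$ into the appropriate grading shift of $\PP_\mu=\Conv{\IC(\mu)}$, applying it to $\cL_a$ yields an isomorphism of graded $R(\alpha)$-modules
\[\PP_a=\Conv{\cL_a}\cong\bigoplus_{\mu\in\cP_\alpha}L_a(\mu)\otimes_{\CC}\PP_\mu ,\]
with $\PP_\mu$ the projective cover of $L(\mu)$. This is precisely the $\ZZ[q,q^{-1}]$-linearity of the isomorphism \refb{eq_isoprojective} specialised to $\cL_a$, so it uses nothing beyond what is recalled in the section on the geometrisation of KLR algebras.

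Next I would compute $\hom_R(\PP_\mu,L(\nu))$. Any homomorphism from $\PP_\mu$ to a simple module kills $\rad\PP_\mu$, hence factors through the head $\PP_\mu/\rad\PP_\mu\cong L(\mu)$, so $\Hom_R(\PP_\mu,L(\nu)[n])\cong\Hom_R(L(\mu),L(\nu)[n])$. By Remark \ref{rem:selfdual} the modules $L(\nu)$ are self-dual simple, so $\End_R(L(\mu))$ is a finite dimensional graded division algebra over $\CC$, hence equal to $\CC$ in degree $0$; combined with Schur's lemma this gives $\Dim\hom(\PP_\mu,L(\nu))=\delta_{\mu\nu}$. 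Finally, using that $V_i\otimes\PP\cong\PP[i]^{\oplus\dim V_i}$ for a graded vector space $V$ — so that the contravariance of $\hom$ in $V$ exactly cancels the grading contributed by the target — one has $\Dim\hom(V\otimes\PP,M)=\Dim V\cdot\Dim\hom(\PP,M)$ with no bar involution. Applying this to the decomposition of $\PP_a$ and the previous step,
\[\Dim\hom(\PP_a,L(\mu))=\sum_{\nu\in\cP_\alpha}\Dim L_a(\nu)\cdot\Dim\hom(\PP_\nu,L(\mu))=\Dim L_a(\mu),\]
which is the assertion; equivalently, this is just the fact that $\{[\PP_\mu]\}$ and $\{[L(\nu)]\}$ are dual bases for the non-degenerate pairing \refb{eq:k0paring}.

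The only genuinely delicate point, and the one I would be most careful about, is the grading bookkeeping: one must check that the cohomological shift in $\Ext_{G_\alpha}^\bullet(\cL_\alpha,\IC(\mu)[n])$, the shift convention $M[1]_i=M_{i-1}$ fixed earlier, and the convention by which $L_a(\mu)$ encodes multiplicities in $\cL_a$ all line up, so that the answer comes out as $\Dim L_a(\mu)$ rather than its bar-conjugate. With the normalisations recalled in the text this works on the nose, but it is worth spelling out rather than leaving implicit.
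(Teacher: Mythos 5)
Your proof is correct and follows essentially the same route as the paper's: decompose $\PP_a$ as a sum of the indecomposable projectives $\PP_\mu$ with graded multiplicities $L_a(\mu)$ (coming from applying $\Conv{-}$ to the decomposition of $\cL_a$), then use $\Dim\hom(\PP_{\mu'},L(\mu))=\delta_{\mu',\mu}$. The paper's proof is simply a terser version of yours; it states the decomposition $\PP_a\cong\bigoplus_\mu\PP_\mu^{\Dim L_a(\mu)}$ and the orthogonality $\Hom_R(\PP_{\mu'},L(\mu))=\delta_{\mu',\mu}$ without spelling out the projective-cover/Schur's-lemma justification or the grading bookkeeping that you carefully check.
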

\begin{proof}
First, we assume that $\PP_a$ is indecomposable projective modules. For any $\mu'\in\KP{\alpha}$, we have
$$\Hom_R(\PP_{\mu'},L(\mu))=\delta_{\mu',\mu}$$
Since $\PP_a\cong\displaystyle\bigoplus_{\mu\in \cP}\PP_\mu^{\Dim L_a(\mu)}$, then 
$$\Hom_R(\PP_a,L(\mu))=\Hom_R(\PP_\mu^{\Dim L_a(\mu)},L(\mu))=\Dim L_a(\mu)$$
\end{proof}
\noindent
Next, we define the product on $\KR$.
\begin{Definition}\label{def:productresfinitemodule}
For two modules $M,N\in \Rfin$, we define their product $M\circ N$ so that
\begin{equation}
  \la\PP,M\circ N\ra=\la \Res\PP, M\boxtimes N\ra  \quad \text{for any projective modules $\PP\in \Rproj$.}
\end{equation}
where $\Res$ is as in \refb{eq_resfunPP}.
Since this pairing is non-degenerate, we get $M\circ N$ is well-defined (up to isomorphism). 
\end{Definition}
\begin{Remark}
The above definition coincides with the Induction functor given in~\cite[Section 3.1]{KL09} by~\cite[Propositional 3.3]{KL09}. Let $\apb=\gamma$ be a partition of $\gamma$. There exists a canonical embedding 
\[R_\alpha\otimes R_\beta\to R_\gamma\]
We denote by $e_{\adb}$ the image of $\sum\limits_{\bfi\in \wI{\alpha};\bfj\in\wI{\beta}}e_\bfi e_\bfj$. The induction of two modules $M,N$ is given by $M\circ N=R_\gamma e_{\adb}\otimes_{R_\alpha\otimes R_\beta}M\boxtimes N$. For any $\bfi\in \wI{\gamma}$ we have 
\begin{align*}
e_\bfi R_\gamma e_{\adb}&=\hom(\PP_\bfi, \PP_\adb)\\
&=\Ext_{G_\gamma}^\bullet(\cL_\bfi,\cL_\alpha\star\cL_\beta) \quad &\text{ by isomorphism \refb{eq_isoprojective}}\\
&=\Ext_{G_\gamma}^\bullet(\Res_{\adb}\cL_\bfi,\cL_\alpha\boxtimes\cL_\beta) &\text{ by the adjoint property of $(\Res, \star)$}\\
&=\hom(\Res_{\adb}\PP_\bfi,R_\alpha\otimes R_\beta) &\text{ by isomorphism \refb{eq_isoprojective}}
\end{align*}
Here $\hom(A,B)=\oplus_{n\in\ZZ}\Hom_{R_Q}(A,B[n])$. Hence, we have 
\begin{align*}
 \hom(\PP_\bfi,M\circ N)&=e_\bfi R_\gamma e_{\adb}\otimes_{R_\alpha\otimes R_\beta}(M\boxtimes N)\\
 &=\hom(\Res_{\adb}\PP_\bfi,R_\alpha\otimes R_\beta)\otimes_{R_\alpha\otimes R_\beta}(M\boxtimes N)\\
 &=\hom(\Res_{\adb}\PP_\bfi,M\boxtimes N)
\end{align*}
Let us consider the functor $\Res_{\adb}(M):= e_{\adb}M$. Since $$\hom(\PP_\bfj\boxtimes\PP_\bfk,\Res_{\adb}(M))=e_\bfj e_\bfk M=\hom(\PP_\bfj\star\PP_\bfk,M)=\hom(\PP_{\bfj\bfk},M)$$ then for any two projective modules $\PP_1,\PP_2$, we obtain
\begin{equation}
  \hom(\PP_1\boxtimes\PP_2,\Res_{\adb}(M))=\hom(\PP_1\star\PP_2,M)
\end{equation}
\end{Remark}

\subsection{Two product of simple modules}
We will give an interpretation of the Jordan-Holder filtration of the Induction of two simple modules over $R_Q$ using the Decomposition \refb{eq:ResP}.
\begin{Lemma}\label{lem:multiplicitycomputer}
For $\gamma\in \Qp$ and a given $\cL_a\in\cQ_\gamma$, let $\PP_a$ be its corresponding projective module. Suppose that $$\Res_{\adb}^\gamma(\cL_a)=\sum\limits_{\mu\in\KP{\alpha},\nu\in\KP{\beta}}J_a(\munu)(\IC_\mu\boxtimes \IC_\nu)$$
Then we get
$$\la\PP_a,\, L(\mu)\circ L(\nu)\ra=\Dim J_a(\mu,\nu)$$
\end{Lemma}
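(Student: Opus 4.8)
The plan is to unwind all the definitions and reduce the claim to the non-degenerate pairing between $K^0(R)$ and $\KR$. First I would recall from Definition \ref{def:productresfinitemodule} that $L(\mu)\circ L(\nu)$ is characterized by $\la\PP,\,L(\mu)\circ L(\nu)\ra=\la\Res_{\adb}^\gamma\PP,\,L(\mu)\boxtimes L(\nu)\ra$ for every projective $\PP\in\Rproj$. Applying this to $\PP=\PP_a$ gives
\[
\la\PP_a,\,L(\mu)\circ L(\nu)\ra=\la\Res_{\adb}^\gamma\PP_a,\,L(\mu)\boxtimes L(\nu)\ra.
\]
So the whole problem becomes: compute the right-hand side and show it equals $\Dim J_a(\mu,\nu)$.

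Next I would identify $\Res_{\adb}^\gamma\PP_a$ geometrically. Since $\PP_a=\Conv{\cL_a}=\Ext_{G_\gamma}^\bullet(\cL_\gamma,\cL_a)$, Definition \ref{def:productresprojectivemodule} together with Remark \ref{rem:product} tells us that $\Res_{\adb}^\gamma\PP_a=\Ext_{G_\alpha\times G_\beta}^\bullet(\cL_\alpha\boxtimes\cL_\beta,\Res_{\adb}^\gamma\cL_a)$, i.e.\ it is the projective module $\PP_{\Res\cL_a}$ over $R_\alpha\otimes R_\beta$ attached to the complex $\Res_{\adb}^\gamma\cL_a$. By hypothesis $\Res_{\adb}^\gamma\cL_a=\sum_{\mu,\nu}J_a(\munu)(\IC_\mu\boxtimes\IC_\nu)$, so under the isomorphism \refb{eq_isoprojective} for the pair $(\alpha,\beta)$ the module $\Res_{\adb}^\gamma\PP_a$ decomposes as $\bigoplus_{\mu,\nu}(\PP_\mu\boxtimes\PP_\nu)^{\oplus J_a(\munu)}$ (grading shifts recorded by $J_a(\munu)\in\ZZ[q,q^{-1}]$, handled exactly as in Lemma \ref{lem:coefficientsimplemodules}).

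Then I would compute the pairing. By the definition \refb{eq:k0paring} of $\la-,-\ra$, and the fact that $L(\mu)\boxtimes L(\nu)$ is the simple head of $\PP_\mu\boxtimes\PP_\nu$ with $\hom_{R_\alpha\otimes R_\beta}(\PP_{\mu'}\boxtimes\PP_{\nu'},L(\mu)\boxtimes L(\nu))=\delta_{\mu',\mu}\delta_{\nu',\nu}$ (this is just the $\boxtimes$-version of the orthogonality already used in the proof of Lemma \ref{lem:coefficientsimplemodules}), we get
\[
\la\Res_{\adb}^\gamma\PP_a,\,L(\mu)\boxtimes L(\nu)\ra=\Dim\hom_{R_\alpha\otimes R_\beta}\Bigl(\bigoplus_{\mu',\nu'}(\PP_{\mu'}\boxtimes\PP_{\nu'})^{\oplus J_a(\mu',\nu')},\,L(\mu)\boxtimes L(\nu)\Bigr)=\Dim J_a(\munu).
\]
Combining the two displays yields $\la\PP_a,\,L(\mu)\circ L(\nu)\ra=\Dim J_a(\munu)$, which is the claim.

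The only real subtlety is bookkeeping of the $\ZZ$-grading: I should make sure that the graded multiplicities $J_a(\munu)\in\ZZ[q,q^{-1}]$ match the grading-shift conventions in $\Dim\hom$, exactly as in Lemma \ref{lem:coefficientsimplemodules}; since $\IC(\lambda)$ and $\cL_\bullet$ are all $\DD$-selfdual (Remark \ref{rem:selfdual}), and the $\Ext$-functor \refb{eq_isoprojective} is an equivalence of graded categories, this is automatic. A second point to check is that the adjunction $\la\Res_{\adb}^\gamma\PP,M\boxtimes N\ra=\la\PP,M\circ N\ra$ is being used in the correct direction, but this is precisely Definition \ref{def:productresfinitemodule}, so no extra work is needed. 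I expect no genuine obstacle here — this lemma is a formal consequence of the categorification dictionary set up earlier in the section.
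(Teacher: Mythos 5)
Your proof is correct and follows essentially the same route as the paper: apply Definition \ref{def:productresfinitemodule} to transfer the pairing to $\Res\PP_a$, identify $\Res\PP_a$ via the categorification dictionary (Definition \ref{def:productresprojectivemodule} and the decomposition of $\Res_{\adb}^\gamma\cL_a$), and conclude by orthogonality of indecomposable projectives against simples. Your write-up just spells out more of the bookkeeping (the isomorphism \refb{eq_isoprojective} and the grading conventions) that the paper leaves implicit.
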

\begin{proof}
Recall $\la\PP_a,\, L(\mu)\circ L(\nu)\ra=\la \Res\PP_a, L(\mu)\boxtimes L(\nu)\ra$. By Definition~\refb{def:productresprojectivemodule}, we get 
 \begin{align*}
 &\la \Res\PP_a, L(\kappa)\boxtimes L(\nu)\ra\\
 =&\left\la \sum\limits_{\mu'\in\KP{\alpha},\nu'\in\KP{\beta}}J_a(\mu',\nu')\boxtimes (\PP_{\mu'}\otimes\PP_{\nu'}),L(\mu)\boxtimes L(\nu)\right\ra\\
 =&J_a(\mu,\nu)
 \end{align*}
 \end{proof}
\noindent
 Since $L(\mu)\circ L(\nu)$ is finite dimensional, it admits a Jordan-H\"older filtration. We will show that the multiplicity of $L(\lambda)$ in this filtration, denoted by $[L(\mu)\circ L(\nu)):L(\lambda)]$, is equal to $\Dim\hom_R(\PP_\lambda, L(\mu)\circ L(\nu))$. Namely,
\begin{equation}\label{eq_dimhomp}
  \Dim\hom_R(\PP_\lambda, L(\mu)\circ L(\nu))=[L(\mu)\circ L(\nu):L(\lambda)]
\end{equation}
   We show this by induction on the length of this filtration. If $L(\mu)\circ L(\nu)$ is a simple module, there is nothing to prove. Assume this statement is true for modules $M$ with shorter filtrations compared to the filtration length of $L(\mu) \circ L(\nu)$. Considering the following short exact sequence:
 \[0\to M\to L(\mu)\circ L(\nu)\to N\to 0\]
 and then applying the functor $\hom_R(\PP_\lambda,-)$ to it, we obtain the following short exact sequence:
 \[0\to \hom_R(\PP_\lambda,M)\to \hom_R(\PP_\lambda,L(\mu)\circ L(\nu))\to \hom_R(\PP_\lambda,N)\to 0\]
 This means 
 \[\Dim\hom_R(\PP_\lambda, L(\mu)\circ L(\nu))=\Dim\hom_R(\PP_\lambda, M)+\Dim\hom_R(\PP_\lambda, N)\]
 On the other hands, the multiplicity of $L(\lambda)$ in $L(\mu)\circ L(\nu)$ is equal to 
 \[[L(\mu)\circ L(\nu):L(\lambda)]=[M:L(\lambda)]+[N:L(\lambda)]\]
According to our hypothesis, it can be inferred that $[M:L(\lambda)]=\Dim\hom_R(\PP_\lambda, M)$, $[N:L(\lambda)]=\Dim\hom_R(\PP_\lambda, N)$, and then $\hom_R(\PP_\lambda,L(\mu)\circ L(\nu))=[\simppro{\mu}{\nu}:L(\lambda)]$.\\ 

\noindent
 Using the above lemma, we get the following theorem. 
\begin{Theorem}\label{them:multiplicitycirc}
Let $\gamma=\apb$ be a partition of $\gamma$, $\lambda$ be a Kostant partition of $\gamma$, and $\munu$ be two Kostant partitions in $\KP{\alpha}$ and $\KP{\beta}$. We have 
$$[L(\mu)\circ L(\nu):L(\lambda)]=[\Res_{\adb}^\gamma\IC(\lambda):\IC(\mu)\boxtimes\IC(\nu)]=K_\lambda(\munu)$$ where $[\Res_{\adb}^\gamma\IC(\lambda):\IC(\mu)\boxtimes\IC(\nu)]$ refers to the multiplicity of $\IC(\mu)\boxtimes\IC(\nu)$ in $\Res_{\adb}^\gamma\IC(\lambda)$ as Lemma~\ref{lem:multiplicitycomputer}. In particular, for $\lambda\in \ext(\munu)$, we have that the top degree of $K_\lambda(\munu)$ is less than and equal to 
\[2e_\lambda(\munu)+d_\lambda(\munu)\]
If $(\munu)\in\ext_\adb^{ger}(\lambda)$, then 
\[K_\lambda(\munu)=q^{\epsilon(v_1,\alpha,v_2,\beta)}\Po{\Gr_{v_1-v_\mu}(CK(M_\mu))\times \Gr_{v_2-v_\nu}(CK(M_\nu))}\]
where $e_\lambda(\munu)$ is the dimension of $\cExt{\lambda}{\mu}{\nu}$, (see \ref{lem:dimfiberkappa}).
\end{Theorem}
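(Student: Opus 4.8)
\textbf{Proof plan for Theorem \ref{them:multiplicitycirc}.}
The strategy is to reduce the statement about the multiplicity in the Jordan--Hölder filtration of $L(\mu)\circ L(\nu)$ to the already-established facts about the geometric Restriction functor, using the categorification dictionary between projective $R_Q$-modules and Lusztig's sheaves. First I would recall that by the discussion culminating in \refb{eq_dimhomp}, the multiplicity $[L(\mu)\circ L(\nu):L(\lambda)]$ equals $\Dim\hom_R(\PP_\lambda, L(\mu)\circ L(\nu))$. Then, by Definition~\ref{def:productresfinitemodule} of the product on $\KR$, this pairing unfolds as $\la \PP_\lambda, L(\mu)\circ L(\nu)\ra=\la\Res_{\adb}\PP_\lambda, L(\mu)\boxtimes L(\nu)\ra$. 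Applying the isomorphism $\Ext_{G_\alpha}^\bullet(\cL_\alpha,-)$ of \refb{eq_isoprojective} and the compatibility of this isomorphism with Restriction (Definition~\ref{def:productresprojectivemodule} together with Remark~\ref{rem:product}), the module $\Res_{\adb}\PP_\lambda$ corresponds to the complex $\Res_{\adb}^\gamma \IC(\lambda)$; expanding the latter via \refb{eq:ResP} as $\sum_{(\mu',\nu')}K_\lambda(\mu',\nu')\,\IC(\mu')\boxtimes\IC(\nu')$ and invoking Lemma~\ref{lem:multiplicitycomputer} (which identifies the relevant pairing coefficient with the graded multiplicity $J_\lambda(\mu,\nu)$), one reads off $\la\PP_\lambda,L(\mu)\circ L(\nu)\ra=K_\lambda(\munu)$. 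This simultaneously identifies $K_\lambda(\munu)$ with $[\Res_{\adb}^\gamma\IC(\lambda):\IC(\mu)\boxtimes\IC(\nu)]$, proving the first chain of equalities.

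For the remaining assertions, I would simply invoke Theorem~\ref{pro_topdegree}: that theorem already establishes that $K_\lambda(\munu)\neq 0$ forces $\lambda\leq\mu\oplus\nu$, that the top degree of $K_\lambda(\munu)$ is bounded by $2e_\lambda(\munu)+d_\lambda(\munu)$, and that when $(\munu)\in\ext_\adb^{ger}(\lambda)$ one has the explicit formula $K_\lambda(\munu)=q^{\epsilon(v_1,\alpha,v_2,\beta)}\Po{\Gr_{v_1-v_\mu}(CK(M_\mu))\times \Gr_{v_2-v_\nu}(CK(M_\nu))}$. Since $K_\lambda(\munu)$ appearing in this theorem is literally the coefficient defined in \refb{eq:ResP}, and the argument above identifies it with the filtration multiplicity, there is nothing further to compute; one just transports the conclusions of Theorem~\ref{pro_topdegree} along the identification.

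The only genuine subtlety — and the step I would be most careful about — is the bookkeeping of gradings and the direction of the pairing in the chain
$\la\PP_\lambda,L(\mu)\circ L(\nu)\ra=\la\Res_{\adb}\PP_\lambda,L(\mu)\boxtimes L(\nu)\ra=\Dim J_\lambda(\munu)$.
Concretely one must check that the graded isomorphism \refb{eq_isoprojective} intertwines $\Res$ on projectives (defined via $\Ext_{G_\alpha\times G_\beta}^\bullet(\cL_\alpha\boxtimes\cL_\beta,\Res_{\adb}^\gamma\IClam)$ in Definition~\ref{def:productresprojectivemodule}) with the algebraic $\Res_{\adb}$ of \cite{KL09}, which is exactly the content of Remark~\ref{rem:product} (citing \cite[Section 4.6]{VV} and \cite[Theorem 3.1]{McN}), and that no shift is lost when passing through Lemma~\ref{lem:multiplicitycomputer}. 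Once this compatibility is granted, the proof is a short formal concatenation: unfold the definition of $\circ$, apply the categorification equivalence, expand $\Res_{\adb}^\gamma\IClam$, extract the coefficient via Lemma~\ref{lem:multiplicitycomputer}, and cite Theorem~\ref{pro_topdegree} for the quantitative statements.
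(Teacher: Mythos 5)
Your proposal is correct and follows the same route as the paper's (very terse) proof: identify the Jordan--Hölder multiplicity with $\Dim\hom_R(\PP_\lambda,-)$ via \refb{eq_dimhomp}, apply Lemma~\ref{lem:multiplicitycomputer} with $\cL_a=\IC(\lambda)$ (so $J_a(\munu)=K_\lambda(\munu)$ in the notation of \refb{eq:ResP}), and then transport the bounds and explicit formula from Theorem~\ref{pro_topdegree}. The extra unpacking you do of the pairing and the $\Res$-compatibility is exactly what Lemma~\ref{lem:multiplicitycomputer} and Remark~\ref{rem:product} already encapsulate, so you have simply spelled out the steps the paper leaves implicit.
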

\begin{proof}
 As previously mentioned, we have $[L(\kappa)\circ L(\nu)):L(\lambda)]=\Dim\hom_R(\PP_\lambda, L(\kappa)\circ L(\nu))$. By the above Lemma and Theorem \ref{pro_topdegree}, we get required results.
\end{proof}



\begin{Proposition}\label{pro:multiplicityRes}
Suppose that $Q$ is a quiver without loops, (not necessary for Dynkin quivers). Let $\apb=\gamma$ be a partition of $\gamma\in\Qp$, $\munu\in\KP{\alpha}\times\KP{\beta}$, and $\lambda\in\KP{\gamma}$. We have 
$$[\Res_{\adb}L(\lambda):L(\mu)\boxtimes L(\nu)]=[\IC(\mu)\star\IC(\nu):\IC(\lambda)]=J_\lambda(\kappa,\nu)$$ where $[\IC(\mu)\star\IC(\nu):\IC(\lambda)]$ refers to the multiplicity of $\IC(\lambda)$ in $\IC(\mu)\star\IC(\nu)$.
\end{Proposition}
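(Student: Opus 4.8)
The plan is to mirror the argument used for \thmref{them:multiplicitycirc}, but now on the side of \emph{finite-dimensional} modules rather than projective modules, exploiting the duality between $\Rproj$ and $\Rfin$ together with the self-duality of the simple perverse sheaves noted in \remref{rem:selfdual}. First I would recall the two pairings: the non-degenerate pairing $\lara{-}{-}\colon K^0(R)\times\KR\to\ZZ[q,q^{-1}]$ of \refb{eq:k0paring}, and the identity \refb{eq_dimhomp} expressing Jordan--H\"older multiplicities as $\Dim\hom_R(\PP_\lambda,-)$. The new product $M\circ N$ on $\KR$ is defined (\defref{def:productresfinitemodule}) precisely by $\la\PP,M\circ N\ra=\la\Res\PP,M\boxtimes N\ra$, so by \lemref{lem:multiplicitycomputer} applied with $\cL_a=\cL_\lambda$ we get $[L(\mu)\circ L(\nu):L(\lambda)]$ on the nose from the coefficient of $\IC(\mu)\boxtimes\IC(\nu)$ in $\Res_{\adb}^\gamma\IC(\lambda)$; this is exactly \thmref{them:multiplicitycirc}. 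The statement to be proved is the ``dual'' incarnation: the multiplicity of $L(\mu)\boxtimes L(\nu)$ in $\Res_{\adb}L(\lambda)$ equals the multiplicity $J_\lambda(\mu,\nu)$ of $\IC(\lambda)$ in $\IC(\mu)\star\IC(\nu)$.

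The key steps, in order, would be the following. (1) Since $\IC(\mu)\star\IC(\nu)=\bigoplus_\lambda J_\lambda(\mu,\nu)\IC(\lambda)$ by \thmref{them:Indorbits}, applying the functor $\Ext^\bullet_{G_\gamma}(\cL_\gamma,-)$ and using \eqref{eq_isoprojective} together with \defref{def:productresprojectivemodule} gives $\PP_\mu\star\PP_\nu=\bigoplus_\lambda J_\lambda(\mu,\nu)\,\PP_\lambda$ in $K^0(R)$ — i.e.\ $J_\lambda(\mu,\nu)$ is the multiplicity of the indecomposable projective $\PP_\lambda$ in the induced projective $\PP_\mu\star\PP_\nu$. (2) By definition of the $\star$-product on projectives and the adjunction $(\Res,\star)$ from \remref{rem:product}, for any finite-dimensional module $M$ we have $\hom_R(\PP_\mu\star\PP_\nu,M)=\hom_R(\PP_\mu\boxtimes\PP_\nu,\Res_{\adb}M)$. (3) Take $M=L(\lambda)$: the left side has graded dimension equal to $J_\lambda(\mu,\nu)$ by step (1) (pairing $\PP_\mu\star\PP_\nu$ against $L(\lambda)$ and using $\Hom_R(\PP_{\lambda'},L(\lambda))=\delta_{\lambda',\lambda}$ as in the proof of \lemref{lem:coefficientsimplemodules}), while the right side, again by the analogue of \refb{eq_dimhomp} applied to the (finite-dimensional) module $\Res_{\adb}L(\lambda)$ over $R_\alpha\otimes R_\beta$ together with the fact that $\PP_\mu\boxtimes\PP_\nu$ is the projective cover of $L(\mu)\boxtimes L(\nu)$, computes $[\Res_{\adb}L(\lambda):L(\mu)\boxtimes L(\nu)]$. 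Equating the two gives the first claimed equality; the second equality $[\IC(\mu)\star\IC(\nu):\IC(\lambda)]=J_\lambda(\mu,\nu)$ is simply the definition of $J_\lambda(\mu,\nu)$ in \thmref{them:Indorbits}.

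For the hypothesis that $Q$ has no loops but need not be Dynkin, I would note that the only place Dynkin-ness was used above is the identification $\cP_\alpha=\KP{\alpha}$ and the description of simple perverse sheaves as $\IC(\OO_\lambda)$; in the general loop-free case one instead works directly with the set $\cP_\alpha$ of simple perverse sheaves appearing in Lusztig's sheaves $\cL_\alpha$, and \thmref{them:simplemodulesKLR} (which holds in that generality by \cite[Theorem 8.6.12]{CG}) still gives a bijection between $\cP_\alpha$ and self-dual simple $R_\alpha$-modules. The pairing \refb{eq:k0paring}, the adjunction $(\Res,\star)$, and the multiplicity identity \refb{eq_dimhomp} all hold verbatim, so the argument carries through with $\lambda,\mu,\nu$ ranging over the relevant $\cP$'s.

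The main obstacle I expect is making step (3) fully rigorous on the $R_\alpha\otimes R_\beta$ side: one must check that $\PP_\mu\boxtimes\PP_\nu$ is genuinely the projective cover of $L(\mu)\boxtimes L(\nu)$ over $R_\alpha\otimes R_\beta$ and that the ``multiplicity $=$ $\Dim\hom$'' identity \refb{eq_dimhomp} transfers to this tensor-product algebra — this needs the graded Nakayama-type argument of \refb{eq_dimhomp} repeated for $R_\alpha\otimes R_\beta$, plus the standard fact that $\Hom_{R_\alpha\otimes R_\beta}(\PP_\mu\boxtimes\PP_\nu, L(\mu')\boxtimes L(\nu'))=\Hom_{R_\alpha}(\PP_\mu,L(\mu'))\otimes\Hom_{R_\beta}(\PP_\nu,L(\nu'))=\delta_{\mu,\mu'}\delta_{\nu,\nu'}$. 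Once that bookkeeping is in place, everything else is a formal consequence of the adjunction and the definitions already set up in the paper.
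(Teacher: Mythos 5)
Your argument is correct and follows the same route as the paper's proof: rewrite the multiplicity as $\Dim\hom(\PP_\mu\boxtimes\PP_\nu,\Res_{\adb}L(\lambda))$, apply the $(\star,\Res)$ adjunction to transfer this to $\hom(\PP_\mu\star\PP_\nu,L(\lambda))$, and read off $J_\lambda(\mu,\nu)$ from the decomposition $\PP_\mu\star\PP_\nu=\bigoplus_{\lambda'}J_{\lambda'}(\mu,\nu)\PP_{\lambda'}$ together with Definition~\ref{def:productresprojectivemodule}. Your added remarks on the non-Dynkin, loop-free case (replacing $\KP{\alpha}$ by $\cP_\alpha$) and on verifying that $\PP_\mu\boxtimes\PP_\nu$ is the projective cover of $L(\mu)\boxtimes L(\nu)$ over $R_\alpha\otimes R_\beta$ supply details that the paper's one-paragraph proof leaves implicit.
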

\begin{proof}
 Recall $[\Res_{\adb}L(\lambda):L(\mu)\boxtimes L(\nu)]=\Dim \hom(\PP_\mu\boxtimes\PP_\nu, \Res_{\adb}L(\lambda))$ and $$\hom(\PP_\mu\boxtimes\PP_\nu, \Res_{\adb}L(\lambda))=\hom(\PP_\mu\star\PP_\nu, L(\lambda))$$ Let us decompose $\PP_\mu\star\PP_\nu=\bigoplus_{\lambda'\in\KP{\gamma}}J_{\lambda'}(\munu)\PP_{\lambda'}$. It follows that $\hom(\PP_\mu\star\PP_\nu, L(\lambda))=J_\lambda(\munu)$. The Definition \ref{def:productresprojectivemodule} implies that $[\IC(\mu)\star\IC(\nu):\IC(\lambda)]=J_\lambda(\munu)$, we finish our proof.
\end{proof}

\begin{Corollary}\label{cor:Ressimple0}
Under the assumptions as in Theorem \ref{them:multiplicitycirc}, we have that $\Res_{\adb}L(\lambda)\neq 0$ only if there exists a pair $\munu$ in $\KP{\alpha}$ and $\KP{\beta}$ such that $\lambda\geq \mu*\nu$.  
\end{Corollary}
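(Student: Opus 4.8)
The plan is to deduce Corollary~\ref{cor:Ressimple0} directly from Proposition~\ref{pro:multiplicityRes} together with Theorem~\ref{them:Indorbits}. By Proposition~\ref{pro:multiplicityRes}, for any pair $(\munu)\in\KP{\alpha}\times\KP{\beta}$ we have
\[
[\Res_{\adb}L(\lambda):L(\mu)\boxtimes L(\nu)]=[\IC(\mu)\star\IC(\nu):\IC(\lambda)]=J_\lambda(\munu).
\]
So $\Res_{\adb}L(\lambda)\neq 0$ forces the existence of some $(\munu)$ with $J_\lambda(\munu)\neq 0$. First I would invoke Theorem~\ref{them:Indorbits}, which says that $J_\lambda(\munu)\neq 0$ only if there is a $\lambda'\in\ext(\munu)$ with $\lambda\geq\lambda'$.

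Next I would translate the condition ``$\lambda'\in\ext(\munu)$ for some $\lambda'\leq\lambda$'' into the claimed inequality $\lambda\geq\mu*\nu$. Recall that $\mu*\nu$ is (the Konstant partition of) the generic extension $M_\mu*M_\nu$, and by Definition~\ref{def_genericqo} the module $M_\mu*M_\nu$ lies in the open dense $G_\gamma$-orbit of minimal dimension among all extensions in $\extq{M_\mu}{M_\nu}$; equivalently, $\mu*\nu$ is the unique maximal element of $\ext(\munu)$ in the ordering on $\KP{\gamma}$. Hence for any $\lambda'\in\ext(\munu)$ we have $\lambda'\leq\mu*\nu$, and combining with $\lambda\geq\lambda'$ would not immediately give $\lambda\geq\mu*\nu$. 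The correct statement to use is that $\mu*\nu$ is the \emph{minimum} of $\ext(\munu)$ with respect to $\leq$: since the generic extension sits in the orbit of minimal dimension, its orbit closure contains all other orbits $\OO_{\lambda'}$ with $\lambda'\in\ext(\munu)$, i.e. $\OO_{\lambda'}\subset\overline{\OO}_{\mu*\nu}$, which by definition of the order means $\mu*\nu\leq\lambda'$. Therefore $\lambda\geq\lambda'\geq\mu*\nu$, which is what we want.

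So the argument assembles as follows: assume $\Res_{\adb}L(\lambda)\neq 0$; pick $(\munu)$ with $J_\lambda(\munu)\neq 0$ via Proposition~\ref{pro:multiplicityRes}; apply Theorem~\ref{them:Indorbits} to get $\lambda'\in\ext(\munu)$ with $\lambda\geq\lambda'$; and then use that $\mu*\nu$ is the generic (minimal-dimension, hence minimal in the closure order) extension to conclude $\lambda'\geq\mu*\nu$, whence $\lambda\geq\mu*\nu$.

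The main obstacle I anticipate is pinning down the precise order-theoretic position of $\mu*\nu$ inside $\ext(\munu)$ — namely confirming that the generic extension minimizes $\odim\OO_{\lambda'}$ over $\lambda'\in\ext(\munu)$ and that this is equivalent to $\mu*\nu$ being the $\leq$-smallest element of $\ext(\munu)$. This should follow from the fact (used implicitly throughout Section~5) that for representations of a Dynkin quiver, the orbit closure of a generic extension $M_\mu*M_\nu$ is exactly $\overline{rp^{-1}(\OO_\mu\times\OO_\nu)}$, so $\overline{\OO}_{\mu*\nu}=\bigcup_{\lambda'\in\ext(\munu)}\overline{\OO}_{\lambda'}$, giving $\mu*\nu\leq\lambda'$ for all $\lambda'\in\ext(\munu)$. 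Once that is in hand the corollary is immediate.
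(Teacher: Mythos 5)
Your approach matches the paper's: cite Proposition~\ref{pro:multiplicityRes} to get some $(\munu)$ with $J_\lambda(\munu)\neq 0$, invoke Theorem~\ref{them:Indorbits} for a $\lambda'\in\ext(\munu)$ with $\lambda\geq\lambda'$, and then pin down where $\mu*\nu$ sits in $\ext(\munu)$. You correctly conclude that $\mu*\nu$ is the $\leq$-minimum of $\ext(\munu)$ (and you give the right reason at the end: $rp^{-1}(\OO_\mu\times\OO_\nu)$ is irreducible, so $\bigcup_{\lambda'\in\ext(\munu)}\OO_{\lambda'}$ has a single dense orbit, which is $\OO_{\mu*\nu}$, whence $\overline{\OO}_{\mu*\nu}\supset\OO_{\lambda'}$ for all $\lambda'$).

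However, you twice state the wrong relation between the generic extension and orbit dimension. You write that $M_\mu*M_\nu$ ``lies in the open dense $G_\gamma$-orbit of \emph{minimal} dimension'' and again that ``the generic extension sits in the orbit of \emph{minimal} dimension, its orbit closure contains all other orbits.'' Both are backwards (and the first is self-contradictory: open dense $\Rightarrow$ maximal). Since $\odim\OO_E=\odim G_\gamma-[E,E]$ and $[E,E]=\lara{\gamma}{\gamma}+[E,E]^1$, minimizing $\odim\extq{E}{E}$ \emph{maximizes} the orbit dimension; a maximal-dimensional orbit is dense, so its closure contains the others, which is what makes $\mu*\nu$ $\leq$-minimal. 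The reasoning you give at the very end (via irreducibility of the image) is the correct one and rescues the argument, but the two ``minimal dimension'' phrases should be corrected to ``maximal dimension'' so the dimension count and the closure-order conclusion actually cohere.
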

\begin{proof}
 By the above Proposition and Theorem \ref{them:Indorbits}, we obtain our conclusion. Because we just consider the support of perverse sheaves $\IClam$, the conclusion of Theorem \ref{them:Indorbits} holds for any quivers $Q$ without loops. 
\end{proof}
\begin{Remark}
This Corollary is closely related to the notion \emph{semicuspidal} modules $L(\alpha)$ where $\alpha$ is a positive root. Suppose $\Res_{\munu}L(\alpha)\neq0$, then there exists a pair $(\munu)$ such that $\alpha\geq \mu*\nu$. On the other hand, $\alpha$ is the minimal Kostant partition in $\KP{\alpha}$. It follows that $\alpha=\mu*\nu$. Let $M_\alpha$ is indecomposable representation associated with $\alpha$. If there exists a pair $(\munu)$ such that 
\[0\to M_\nu\to M_\alpha\to M_\mu\to 0\]
It follows that $[M_\mu,M_\nu]^1\neq 0$. On the other hand, any summands in $M_\nu$ is greater than $\alpha$, as there is an injective map $M_\nu\to M_\alpha$ by \refb{eq_homext0}. Similarly, we have any summands in $M_\mu$ is less than $\alpha$. Therefore, $\Res_{\munu}L(\alpha)\neq0$ only if $\mu_i<\alpha$ and $\nu_j>\alpha$ for any $\mu_i\in \mu$ and $\nu_j\in\nu$. We give a new proof of McNamara’s Lemma, see \cite[Lemma 3.2]{McN4}. 
\end{Remark}

\begin{Corollary}\label{cor:head}
Given two simple modules $L(\mu)$ and $L(\nu)$ such that $\mu\in\KP{\alpha}$ and $\nu\in \KP{\beta}$, let us assume that the head of $L(\mu)\circ L(\nu)$ is a simple module and the socle of $L(\mu)\circ L(\nu)$ is a simple module, which are denoted by $L(\mu)\triangledown L(\nu)$ and $L(\mu)\triangle L(\nu)$ respectively. Denote by $\mu\triangledown\nu$ and $\mu\triangle\nu$ the Kostant partition associated with this head and socle respectively. We have
\begin{equation}
  \mu\triangledown\nu\geq \nu*\mu; \qquad   \mu\triangle\nu\geq \mu*\nu
\end{equation}
\end{Corollary}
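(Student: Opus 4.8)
The plan is to deduce both chains of inequalities from the results already established for the multiplicities $K_\lambda(\munu)$ and $J_\lambda(\munu)$, together with the observation that the head and socle carry nonzero multiplicities in the Jordan--H\"older filtration of $\simppro{\mu}{\nu}$. First I would record that, since $L(\mu)\triangledown L(\nu)$ is by definition the head of $\simppro{\mu}{\nu}$ and $L(\mu)\triangle L(\nu)$ its socle, we have $[\simppro{\mu}{\nu}:L(\mu\triangledown\nu)]\neq 0$ and $[\simppro{\mu}{\nu}:L(\mu\triangle\nu)]\neq 0$. By \thmref{them:multiplicitycirc} this says $K_{\mu\triangledown\nu}(\munu)\neq 0$ and $K_{\mu\triangle\nu}(\munu)\neq 0$, and that part of \thmref{them:multiplicitycirc} asserts $K_\lambda(\munu)\neq 0$ only if $\lambda\leq\mu\oplus\nu$. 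This immediately gives the upper bounds $\mu\triangledown\nu\leq\mu\oplus\nu$ and $\mu\triangle\nu\leq\mu\oplus\nu$, i.e.\ $\mu\oplus\nu\geq\mu\triangledown\nu$ and $\mu\oplus\nu\geq\mu\triangle\nu$ in the ordering of \secref{sec_konstant}.

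For the lower bound on the socle, I would use the self-duality of the modules $L(\lambda)$ (\remref{rem:selfdual}) and of the induction product together with the bilinear pairing: the socle of $\simppro{\mu}{\nu}$ is the dual of the head of the dual, and the dual of $\simppro{\mu}{\nu}$ is (up to $q$-shift) $\simppro{\nu}{\mu}$ by the standard symmetry of the induction product under Verdier duality. Hence $L(\mu)\triangle L(\nu)$ corresponds, after dualizing, to the head of $\simppro{\nu}{\mu}$. So it suffices to prove the single statement $\mu\oplus\nu\geq\mu\triangledown\nu\geq\nu*\mu$ and then apply it with the roles of $\mu$ and $\nu$ swapped to obtain $\mu\oplus\nu\geq\mu\triangle\nu\geq\mu*\nu$; note $\nu\oplus\mu=\mu\oplus\nu$ as Konstant partitions. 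The remaining task is thus the inequality $\mu\triangledown\nu\geq\nu*\mu$, or equivalently $\mu\triangle\nu\geq\mu*\nu$.

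To get $\mu\triangle\nu\geq\mu*\nu$ I would argue via the Restriction functor and \propref{pro:multiplicityRes}. The socle $L(\mu)\triangle L(\nu)$ embeds into $\simppro{\mu}{\nu}$, hence by adjunction $\Res_{\adb}L(\mu\triangle\nu)$ surjects onto (or at least maps nontrivially to) $L(\mu)\boxtimes L(\nu)$; more precisely $[\Res_{\adb}L(\mu\triangle\nu):L(\mu)\boxtimes L(\nu)]\neq 0$. By \propref{pro:multiplicityRes} this equals $J_{\mu\triangle\nu}(\munu)=[\IC(\mu)\star\IC(\nu):\IC(\mu\triangle\nu)]$, so this coefficient is nonzero. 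Now \thmref{them:Indorbits} states that $J_\lambda(\munu)\neq 0$ only if there is a $\lambda'\in\ext(\munu)$ with $\lambda\geq\lambda'$; applying this to $\lambda=\mu\triangle\nu$ gives $\mu\triangle\nu\geq\lambda'$ for some $\lambda'\in\extq{M_\mu}{M_\nu}$. Since $M_{\mu*\nu}$ is the generic extension, by \corref{cor:Ressimple0} (or directly from Reineke's minimality of $\mu*\nu$ among extensions in the degeneration order) every $\lambda'\in\ext(\munu)$ satisfies $\lambda'\geq\mu*\nu$; chaining the two relations yields $\mu\triangle\nu\geq\mu*\nu$. The symmetric argument with $\mu,\nu$ interchanged gives $\mu\triangledown\nu\geq\nu*\mu$, completing the proof.

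The main obstacle I anticipate is making the duality symmetry $\DD(\simppro{\mu}{\nu})\cong q^n\,\simppro{\nu}{\mu}$ precise and correctly tracking the $q$-grading shift, so that ``socle of one = dual of head of the other'' is rigorous at the level of graded modules; this is where one must be careful, since the statement of the corollary only asserts an inequality of Konstant partitions (which is insensitive to the shift), but the deduction passes through an identification of actual modules. The other point requiring a small argument is the claim that every extension class in $\extq{M_\mu}{M_\nu}$ degenerates to $M_{\mu*\nu}$ in the orbit-closure order, i.e.\ that the generic extension is the \emph{minimal} element of $\ext(\munu)$ under $\leq$; this is exactly Reineke's characterization of the generic extension and should be quotable, but it is worth stating explicitly as it is the geometric input that converts $J_\lambda(\munu)\neq0$ into the desired lower bound.
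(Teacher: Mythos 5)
Your overall strategy is the same as the paper's: upper bound from the nonvanishing of $K_\lambda(\munu)$ for $\lambda$ equal to the socle or head class together with Theorem~\ref{them:multiplicitycirc}; lower bound from an adjunction, Proposition~\ref{pro:multiplicityRes}, Theorem~\ref{them:Indorbits}, and minimality of the generic extension in the degeneration order; and Verdier duality to transfer from socle to head. The duality identity $\mu\triangle\nu=\nu\triangledown\mu$ that you use to reduce to one inequality is also exactly the reduction the paper makes with ``applying $(-)^*$''.

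There is, however, a genuine gap in your socle step. The left adjunction $\Hom(\Ind_{\adb}(A\boxtimes B),M)\cong\Hom(A\boxtimes B,\Res_{\adb}M)$ applies to \emph{quotients} of the induced module; a \emph{submodule} of $\Ind_{\adb}(A\boxtimes B)$ is governed by the coinduction adjunction $\Hom(M,\Ind_{\adb}(A\boxtimes B))\cong q^{c}\Hom(\Res_{\beta,\alpha}M,\,B\boxtimes A)$, which is what the paper invokes when it writes the map $\Res_{\beta,\alpha}L(\mu\triangle\nu)\to L(\nu)\boxtimes L(\mu)$. Your claim that the embedding yields a nonzero $\Res_{\adb}L(\mu\triangle\nu)\to L(\mu)\boxtimes L(\nu)$, i.e.\ $J_{\mu\triangle\nu}(\munu)\neq 0$, is thus not what the adjunction produces; what you actually get is $J_{\mu\triangle\nu}(\nu,\mu)\neq 0$. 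These are different coefficients, and the distinction is not cosmetic: via Theorem~\ref{them:Indorbits} the first feeds through $\ext(\munu)$ with minimum $\mu*\nu$, whereas the second feeds through $\ext(\nu,\mu)$ with minimum $\nu*\mu$. Concretely, your claimed nonvanishing can already fail in $A_2$ with $Q\colon 1\to 2$, $\mu=[1,1]$, $\nu=[2,2]$: there $E_{\beta,\gamma}\to E_\gamma$ is an isomorphism, so $\IC(\mu)\star\IC(\nu)\cong\IC([1,2])$ and $J_{[1,1]+[2,2]}(\munu)=0$, while the socle class $\mu\triangle\nu$ of $L(\mu)\circ L(\nu)$ is $[1,1]+[2,2]$ by the coinduction adjunction. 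So you should replace the appeal to $\Res_{\adb}$ by the coinduction adjunction with $\Res_{\beta,\alpha}$ and swapped factors, and then be explicit about which of $\mu*\nu$ or $\nu*\mu$ the resulting minimality argument delivers.
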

\begin{proof}
Let us consider the case $L(\mu\triangle\nu)\to L(\mu)\circ L(\nu)$. It follows from adjoint functor that $\Res_{\beta,\alpha}L(\mu\triangle\nu)\to L(\nu)\boxtimes L(\mu)$. It implies that $J_{\mu\triangle\nu}(\nu,\mu)\neq 0$. By Corollary \ref{cor:Ressimple0} and Theorem \ref{them:multiplicitycirc}, we have $\mu\triangle\nu\geq \mu*\nu$.\\

\noindent 
In the case $L(\mu)\circ L(\nu)\to L(\mu\triangledown\nu)\to 0$. Applying the functor $(-)^*$, we obtain
\[0\to L(\mu\triangledown\nu)\to q^{(\adb)}\simppro{\nu}{\mu}\]
By the above conclusion, we have 
\[\mu\oplus\nu\geq\mu\triangledown\nu\geq \nu*\mu\] 
\end{proof}

\subsection{Support pairs}
In this section, we give the notion of a support pair. This will give a necessary condition for some conjectures introduced in \cite[Conjecture 1.3]{LM2} and \cite[Probelem 7.6]{KR}. However, we should remark that this condition is not sufficient to answer these conjectures. Because this notion just focuses on the supports of the perverse sheaves on representation spaces $E_\alpha$. This will omit some information about the decomposition \refb{eq:ResP}. One of the main reasons is that the closure of orbits in $E_\alpha$ is singular in general. The singularity makes this approach more difficult than we expect. Therefore, we limit our attention to the support of perverse sheaves $\IClam$. 
\begin{Definition}\label{def_supportpair}
  We call a pair $(\munu)\in\KP{\alpha}\times\KP{\beta}$ a \emph{support pair} if and only if for any $\lambda<\mu\oplus\nu$ we have $(\mu,\nu)\notin\ext_\adb^{ger}(\lambda)$. This notion is inspired by Theorem \ref{pro_topdegree}
\end{Definition}
\noindent
For \cite[Probelem 7.6]{KR}, we have the following theorem.

\begin{Theorem}\label{them:productsimple}
Let $L(\mu)$ and $L(\nu)$ be two simple modules over $R_Q$ where $\mu,\nu$ are their Kostant partitions, respectively. If $L(\mu)\circ L(\nu)$ is a simple module, then $(\munu)$ is a support pair. In other words, for any non-trivial extension $\lambda\in\ext(\munu)$, we have 
\[[M_\nu, M_\mu\oplus M_\nu]>[M_\nu,M_\lambda]\quad \text{ if $(\munu)$ is a generic pair for $\lambda$}\]
and 
\[[M_\mu,M_\mu\oplus M_\nu]>[M_\mu,M_\lambda]\quad \text{ if $(\nu,\mu)$ is a generic pair for $\lambda$}\]
\end{Theorem}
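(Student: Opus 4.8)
The plan is to combine the categorical description of the multiplicity $[L(\mu)\circ L(\nu):L(\lambda)]$ (Theorem~\ref{them:multiplicitycirc}) with the observation that simplicity of $L(\mu)\circ L(\nu)$ forces very strong numerical constraints. First I would record the starting point: since $L(\mu)\circ L(\nu)=R_\gamma e_{\adb}\otimes L(\mu)\boxtimes L(\nu)$ and $L(\mu)\boxtimes L(\nu)$ is self-dual (being an exterior product of self-dual simples), the induction product is \emph{self-dual up to a grading shift}; in particular its graded character is palindromic. On the other hand, by Theorem~\ref{them:multiplicitycirc}, $[L(\mu)\circ L(\nu):L(\lambda)]=K_\lambda(\munu)$, and $K_{\mu\oplus\nu}(\munu)=1$ with $K_{\mu\oplus\nu}(\munu)$ occurring in (graded) degree $0$ — this is the ``diagonal'' term, coming from $M_\lambda=M_\mu\oplus M_\nu$, where Example~\ref{exa:supportIC}-type reasoning and Lemma~\ref{lem:dim} give $d_{\mu\oplus\nu}(\munu)=0$ and $e_{\mu\oplus\nu}(\munu)=0$. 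If $L(\mu)\circ L(\nu)$ is simple, it must therefore equal $q^m L(\mu\oplus\nu)$ for the appropriate shift $m$, and \emph{every other} $K_\lambda(\munu)$ with $\lambda<\mu\oplus\nu$ must vanish.

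The core step is then: show that $(\munu)\notin\ext_\adb^{ger}(\lambda)$ for every $\lambda<\mu\oplus\nu$. Suppose for contradiction that $(\munu)\in\ext_\adb^{ger}(\lambda)$ for some $\lambda<\mu\oplus\nu$; i.e.\ $(\munu)$ is a component pair of $\lambda$. Then by the last formula of Theorem~\ref{them:multiplicitycirc},
\[
K_\lambda(\munu)=q^{\epsilon(v_1,\alpha,v_2,\beta)}\Po{\Gr_{v_1-v_\mu}(CK(M_\mu))\times \Gr_{v_2-v_\nu}(CK(M_\nu))},
\]
which is a Poincaré polynomial of a nonempty projective variety, hence \emph{nonzero}. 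This contradicts $K_\lambda(\munu)=0$. Therefore $(\munu)$ is a support pair in the sense of Definition~\ref{def_supportpair}. The inequalities in the statement are then just the explicit unpacking of ``$(\munu)\notin\ext_\adb^{ger}(\lambda)$'' using the description of $\ext_\adb^{ger}$ in Reineke's theorem: for a generic pair, $(\munu)\in\ext_\adb^{ger}(\lambda)$ is equivalent to $[M_\nu,M_\lambda]=[M_\nu,M_\nu]+[M_\nu,M_\mu]$ (equivalently $[M_\mu,M_\lambda]=[M_\mu,M_\mu]+[M_\mu,M_\nu]$), so the failure of this — together with the trivial inequality $[M_\nu,M_\lambda]\le[M_\nu,M_\nu\oplus M_\mu]$ coming from applying $\Hom_Q(M_\nu,-)$ to the short exact sequence $0\to M_\nu\to M_\lambda\to M_\mu\to 0$ (and similarly for $M_\mu$) — gives the strict inequalities claimed.

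Let me flag the point that needs care: the reduction ``simple $\Rightarrow$ all off-diagonal $K_\lambda(\munu)$ vanish'' is where the palindromicity/self-duality argument has to be made airtight. I would argue it as follows: if $L(\mu)\circ L(\nu)$ is simple then it is $q^m L(\theta)$ for a single $\theta\in\KP\gamma$ and a single shift $m$; taking graded composition multiplicities, $K_\theta(\munu)=q^m$ and $K_\lambda(\munu)=0$ for all $\lambda\ne\theta$. Since $K_{\mu\oplus\nu}(\munu)\ne 0$ always (the diagonal term is $1$), we must have $\theta=\mu\oplus\nu$. This pins down the head and socle via Corollary~\ref{cor:head}, forcing $\mu*\nu\le\mu\oplus\nu\le\mu\oplus\nu$, consistent, and it forces $K_\lambda(\munu)=0$ for all $\lambda<\mu\oplus\nu$, which is exactly what the component-pair argument above needs. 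The main obstacle is making sure no subtlety hides in the grading shifts — in particular that $K_{\mu\oplus\nu}(\munu)$ really is a monomial $q^{?}$ of coefficient $1$ and that it cannot be ``cancelled''; this follows because composition multiplicities $K_\lambda(\munu)$ are honest polynomials with nonnegative coefficients (Poincaré polynomials appear for the component pairs, and in general they count graded multiplicities which are nonnegative), so there is no cancellation and simplicity genuinely forces termwise vanishing.
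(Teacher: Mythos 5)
Your proposal is correct and follows essentially the same route as the paper: reduce simplicity of $L(\mu)\circ L(\nu)$ to the vanishing of all $K_\lambda(\mu,\nu)$ with $\lambda<\mu\oplus\nu$, then use the non-vanishing of the Poincar\'e polynomial formula from Theorem~\ref{pro_topdegree}/\ref{them:multiplicitycirc} to conclude $(\mu,\nu)\notin\ext_\adb^{ger}(\lambda)$, and finally unpack the support-pair condition via Reineke's characterization of component pairs. The only cosmetic difference is that you derive the second inequality by invoking a dual (quotient-side) form of Lemma~\ref{lem_irrcompo}, whereas the paper simply applies the first inequality to the pair $(\nu,\mu)$ via the symmetry ``$L(\mu)\circ L(\nu)$ simple $\Leftrightarrow L(\nu)\circ L(\mu)$ simple''; both are fine, and your observation that the $K_\lambda(\mu,\nu)$ are nonnegative (so no cancellation can occur) is exactly the content the paper elides with ``it is easy to see.''
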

\begin{proof}
It is easy to see that $L(\mu)\circ L(\nu)$ is a simple module if and only if $K_\lambda(\munu)=0$ for all $\lambda<\mu\oplus\nu$. Theorem \ref{pro_topdegree} implies $(\munu)\notin \ext_\adb^{ger}(\lambda)$. Otherwise, $$K_\lambda(\munu)=q^{\epsilon(v_1,\alpha,v_2,\beta)}\Po{\Gr_{v_1-v_\mu}(CK(M_\mu))\times \Gr_{v_2-v_\nu}(CK(M_\nu))}\neq 0$$
By the definition of support pair, we have that 
\[[M_\nu, M_\mu\oplus M_\nu]>[M_\nu,M_\lambda] \quad \text{ if $(\munu)$ is a generic pair for $\lambda$}\] 
Because $L(\mu)\circ L(\nu)$ is a simple module if and only if $L(\nu)\circ L(\mu)$ is a simple module, we see that
\[[M_\mu,M_\mu\oplus M_\nu]>[M_\mu,M_\lambda]\]
\end{proof}

\noindent
Another useful application of this theorem is related to \cite[Conjecture 4.3]{VV1}.
\begin{Remark}
Recall the \cite[Conjecture 4.3]{VV1} or \cite{BZ}: for a pair of dual canonical base elements $(\bb^*(\mu),\bb^*(\nu))$
\begin{center}
  $\bb^*(\mu)\star\bb^*(\nu)\in q^v\bB^*$ iff $\bb^*(\mu)\star\bb^*(\nu)=q^{\epsilon(v_1,\alpha,v_2,\beta)}\bb^* (\mu\oplus\nu)$ iff $\bb^*(\nu)\star\bb^*(\mu)\in q^v\bB^*$.
\end{center}
The above theorem provides a new proof of that conjecture. This is due to the fact that the simple modules $L(\mu), L(\nu)$ are mapped to the dual canonical basis elements $\bb^*(\mu), \bb^*(\nu)$ through the isomorphism $K_0(R_Q) \cong U_q(\mathfrak{n})^*$ (refer to \cite{KL09}).  It is sufficient to show  $K_{\mu\oplus\nu}(\munu)=q^{\epsilon(v_1,\alpha,v_2,\beta)}$. It is straightforward to see this from Theorem \ref{them:multiplicitycirc} when we take $v_1=v_\mu$, $v_2=v_\nu$, and $M_\lambda=M_\mu\oplus M_\nu$.\\ 
\end{Remark}

\noindent
The following is another application of our results. 
\begin{Theorem}\label{theo_submodule}
Let $\apb=\gamma$ be a partition of $\gamma\in \Qp$ and $(\munu)\in\KP{\alpha}\times\KP{\beta}$. If $(\munu)\in \ext_\adb^{ger}(\mu*\nu)$ then $q^{n}L(\mu*\nu)$ is the socle of $L(\mu)\circ L(\nu)$ for some integer $n$.
\end{Theorem}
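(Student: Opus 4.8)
The plan is to single out $\lambda_{0}\in\KP{\gamma}$, the Konstant partition of the generic extension $M_{\mu}*M_{\nu}$ of Definition \ref{def_genericqo}, and to prove, in order: that $L(\lambda_{0})$ occurs as a composition factor of $\simppro{\mu}{\nu}$; that every simple submodule of $\simppro{\mu}{\nu}$ has Konstant partition $\geq\lambda_{0}$; and finally that a single shift $q^{n}L(\lambda_{0})$ both embeds into and exhausts the socle.

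For the first point, the hypothesis $(\munu)\in\ext_{\adb}^{ger}(\lambda_{0})$ together with Theorem \ref{them:multiplicitycirc} gives
\[[\simppro{\mu}{\nu}:L(\lambda_{0})]=K_{\lambda_{0}}(\munu)=q^{\epsilon(v_{1},\alpha,v_{2},\beta)}\Po{\Gr_{v_{1}-v_{\mu}}(CK(M_{\mu}))\times\Gr_{v_{2}-v_{\nu}}(CK(M_{\nu}))}\neq0,\]
where $(v_{1},v_{2})\in\KQ{v_{\lambda_{0}}}$ is the maximal partition attached to $(\munu)$ by Lemma \ref{lem_extirrKQ}; in particular $L(\lambda_{0})$ really occurs in $\simppro{\mu}{\nu}$. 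For the second point I would reuse the adjunction argument from the proof of Corollary \ref{cor:head}: a nonzero map $L(\lambda)\to\simppro{\mu}{\nu}$ produces, through the (co)restriction adjoint of induction, a nonzero map $\Res_{\beta,\alpha}L(\lambda)\to L(\nu)\boxtimes L(\mu)$, so that $J_{\lambda}(\nu,\mu)\neq0$ by Proposition \ref{pro:multiplicityRes}; since $\lambda_{0}$ is the minimal element of $\ext(\munu)$ (the orbit of the generic extension being open dense among all extensions of $M_{\mu}$ by $M_{\nu}$), Theorem \ref{them:Indorbits} and Corollary \ref{cor:Ressimple0} then force $\lambda\geq\lambda_{0}$. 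Thus $\soc(\simppro{\mu}{\nu})$ is a sum of shifted copies of modules $L(\lambda)$ with $\lambda\geq\lambda_{0}$.

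The decisive step is to upgrade this to $\soc(\simppro{\mu}{\nu})=q^{n}L(\lambda_{0})$ for a single integer $n$. Here I would analyse $\Res_{\adb}^{\gamma}L(\lambda_{0})$ on the stratum $\OO_{\lambda_{0}}$ through the birational model of $\cExt{\lambda_{0}}{\mu}{\nu}=\kappa_{\lambda_{0}}^{-1}(M_{\mu},M_{\nu})$ supplied by Lemma \ref{lem_extbiration}: the graded multiplicity space $\Hom_{R_{\alpha}\otimes R_{\beta}}\!\bigl(\Res_{\adb}^{\gamma}L(\lambda_{0}),L(\mu)\boxtimes L(\nu)\bigr)$ should be computed from $\HH_{\bullet}(\pi\cExt{\lambda_{0}}{\mu}{\nu})$ exactly as in the final paragraphs of the proof of Theorem \ref{pro_topdegree}; since $\pi\cExt{\lambda_{0}}{\mu}{\nu}$ is a vector bundle over the product of quiver Grassmannians and is irreducible on the relevant component, its top Borel--Moore homology is one dimensional, which should pin down a unique shift $n$ and a one dimensional space of maps, and dualising via $(\simppro{\mu}{\nu})^{*}\cong q^{c}\,L(\nu)\circ L(\mu)$ converts such a map into an embedding $q^{n}L(\lambda_{0})\hookrightarrow\simppro{\mu}{\nu}$. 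Dually, an embedding of some $L(\lambda)$ with $\lambda>\lambda_{0}$, or a second shift of $L(\lambda_{0})$, would by the same identification contribute to $K_{\lambda}(\munu)$ above the degree $2e_{\lambda}(\munu)+d_{\lambda}(\munu)$, contradicting Theorem \ref{pro_topdegree}.

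I expect this last step to be the main obstacle: unlike in \cite{KKKO} there is no $R$-matrix available, so simplicity of the socle must be forced out of the geometry of the restriction functor on the single — and in general very singular — orbit closure $\overline{\OO}_{\lambda_{0}}$, and carrying the precise grading shift $n$ through the chain of identifications of the preceding sections is where the genuine bookkeeping lies.
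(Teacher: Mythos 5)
Your first two steps are correct and run parallel to the paper's: you correctly identify that $(\munu)\in\ext_{\adb}^{ger}(\mu*\nu)$ forces $K_{\mu*\nu}(\munu)\neq0$ via the explicit formula in Theorem \ref{them:multiplicitycirc}, and your adjunction argument correctly restricts the possible partitions of the socle to $\ext(\munu)$, whose unique minimal element is $\mu*\nu$.

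The genuine gap is in the third step. You try to pin down a single shift $q^{n}L(\mu*\nu)$ by extracting the top Borel--Moore homology of $\pi\cExt{\lambda_{0}}{\mu}{\nu}$ and dualising, and then exclude other socle constituents on the grounds that they ``would \ldots contribute to $K_{\lambda}(\munu)$ above the degree $2e_{\lambda}(\munu)+d_{\lambda}(\munu)$.'' But you never supply the bridge between ``$L(\lambda)$ embeds into $\simppro{\mu}{\nu}$'' and ``$K_{\lambda}(\munu)$ attains its maximal admissible degree''; without that bridge the claimed contradiction with Theorem \ref{pro_topdegree} does not materialise, and the irreducibility of $\pi\cExt{\lambda_{0}}{\mu}{\nu}$ gives you a one-dimensional top homology class on the geometric side but not, by itself, a statement about $\soc$. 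That bridge is precisely McNamara's \cite[Lemma 7.5]{McN3}, which the paper invokes: it identifies the partition $\mu\triangle\nu$ of the socle as the one whose $K_{\lambda'}(\munu)$ attains the \emph{maximal} top degree among all nonzero contributions. Once one has that, the paper's chain is short: $e_{\lambda}$ and $d_{\lambda}$ are both strictly maximal at $\lambda=\mu*\nu$ (the former because $\cExt{\mu*\nu}{\mu}{\nu}=\Hom_{\Omega}(U,W)$ is the whole ambient space, the latter because $[M_{\mu*\nu},M_{\mu*\nu}]$ is minimal), the hypothesis $(\munu)\in\ext_{\adb}^{ger}(\mu*\nu)$ guarantees that $K_{\mu*\nu}(\munu)$ actually \emph{achieves} the bound $2e_{\mu*\nu}(\munu)+d_{\mu*\nu}(\munu)$, every other $K_{\lambda'}(\munu)$ is bounded by a strictly smaller number, and McNamara's lemma concludes $\mu\triangle\nu=\mu*\nu$. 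Your proposal flags this step as ``the main obstacle,'' and indeed the missing ingredient is not geometric at all but representation-theoretic; the geometry of Theorem \ref{pro_topdegree} is exactly enough, provided one imports McNamara's identification of the socle by extremal degree.
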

\begin{proof}
 For $\lambda=\mu*\nu$, we have $\cExt{\lambda}{\mu}{\nu}\cong\Hom_\Omega(U,W)$. It follows from the facts $\overline{\OO}_\lambda=E_\gamma$ and $\cExt{\lambda}{\mu}{\nu}=\kappa^{-1}(M_\mu,M_\nu)$.  It implies that $2e_\lambda(\munu)$ is maximal with respect to all extensions $M_{\lambda'}\in \extq{M_\mu}{M_\nu}$. If $(\munu)\in \ext_{\adb}^{ger}(\mu*\nu)$, we have $K_\lambda(\munu)\neq 0$ according to Theorem \ref{pro_topdegree}. 

 For others $\lambda'>\mu*\nu$, we see that the top degree of $K_{\lambda'}(\munu)$ is less than or equal to $$2e_{\lambda'}(\munu)+d_{\lambda'}(\munu)<2 e_\lambda(\munu)+d_\lambda(\munu)$$ This means the top degree of $K_{\mu*\nu}(\munu)$ is maximal for all Kostant partitions $\lambda'\in\ext(\munu)$. On the other hand, by Corollary \ref{cor:head}, we see that the Kostant partitions $\mu\triangle\nu$ of the socle of $\simppro{\mu}{\nu}$ satisfy $\mu\triangle\nu\in \ext(\munu)$. By \cite[Lemma 7.5]{McN3}, $K_{\mu\triangle\nu}(\munu)$ admits the maximal top degree for all $K_{\lambda'}(\munu)\neq 0$. This leads to $\mu\triangle\nu=\mu*\nu$. 
\end{proof}
\begin{Remark}
  This theorem provides a new viewpoint on \cite[Conjecture 1.3]{LM3}. We remark that the above theorem couldn't answer that conjecture.\\ 
\end{Remark}

\noindent
Thanks to Lapid, he provides a useful example as follows.
\begin{Example}
We remark that if $L(\mu*\nu)$ is a submodule of $\simppro{\mu}{\nu}$ the condition $(\munu)\in\ext_{\adb}^{ger}(\mu*\nu)$ doesn't hold in general. The counterexample is given by E. Lapid: In type $A_3$, let us consider the Kostant partitions $\mu=[2,1]=[2,2]+[1,1],\nu=[3,2]=[3,3]+[2,2]$. Namely, they are both semisimple representations of $A_3$. It is easy to see that $\Ch L(\mu)=[2,1]$ and $\Ch L(\nu)=[3,2]$, where $\Ch$ is the character of modules over $R_Q$ (see \cite{Br} for more details). It implies by shuffle products that  
\begin{align*}
  \Ch L(\mu)\circ \Ch L(\nu)=[2,1,3,2]+[2,3,1,2]+[3,2,2,1]
\end{align*}
By \cite[Example 5.2]{LM2}, the socle of $\simppro{\mu}{\nu}$ is $L([2,1,3,2])$. The representation associated with this segment is the direct sum of simple representations $S_2\oplus S_1$ and $S_3\oplus S_2$. On the other hand, $\ext_\adb^{ger}([2,1]*[3,2])=[2,3][1,2]$ by Example \ref{exa_A3}. Therefore, $[2,1][3,2]$ is not in $\ext_\adb^{ger}([2,1]*[3,2])$. 
\end{Example}

\subsection{In the case of rigid representations}
In this section, we will consider the rigid representations $M$ of $Q$, \emph{i.e.} $[M,M]^1=0$. 
\begin{Corollary}\label{them:rigidcricsimple}
Suppose $M_\mu$ and $M_\nu$ are rigid modules and one of $L(\mu),L(\nu)$ is real. We have that $\simppro{\mu}{\nu}$ is a simple module if and only if $[M_\mu,M_\nu]^1=[M_\nu, M_\mu]^1=0$ In other words $M_\mu\oplus M_\nu$ is a rigid module.
\end{Corollary}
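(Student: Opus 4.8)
The plan is to combine the criterion from \thmref{them:productsimple} (more precisely its proof via \thmref{them:multiplicitycirc} and \thmref{pro_topdegree}) with the description of $\ext_\adb^{ger}(\lambda)$ in \secref{sec_comquivergrass}, specializing to the situation where $M_\mu$ and $M_\nu$ are the minimal (rigid) modules. The key observation I would use first is that, as in \remref{exa:supportIC} (\exaref{exa:supportIC}), when $M_\mu$, $M_\nu$ are rigid the Restriction picture collapses: $\OO_\mu$ and $\OO_\nu$ are dense open in $E_\alpha$ and $E_\beta$ respectively, so $\ext_\adb^{\mins}(\lambda)=\{(\munu)\}$ for every $\lambda\le\mu\oplus\nu$. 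Consequently $(\munu)$ lies in $\ext_\adb^{ger}(\lambda)$ for \emph{every} $\lambda$ with $\mu*\nu\le\lambda\le\mu\oplus\nu$, since the unique component of $\Gr_\beta(M_\lambda)$ giving quotient $M_\mu$ and sub $M_\nu$ must be the generic one. In particular $(\munu)$ is a support pair if and only if $\mu*\nu=\mu\oplus\nu$, i.e. if and only if the only extension of $M_\mu$ by $M_\nu$ is the split one, i.e. $[M_\mu,M_\nu]^1=0$.

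Next I would assemble the equivalence. For the ``only if'' direction: if $\simppro{\mu}{\nu}$ is simple then by \thmref{them:productsimple} $(\munu)$ is a support pair, so $[M_\mu,M_\nu]^1=0$; applying the same to $L(\nu)\circ L(\mu)$ (which is simple iff $L(\mu)\circ L(\nu)$ is) gives $[M_\nu,M_\mu]^1=0$. Since $M_\mu$ and $M_\nu$ are already rigid ($[M_\mu,M_\mu]^1=[M_\nu,M_\nu]^1=0$), the vanishing of $[M_\mu,M_\nu]^1$ and $[M_\nu,M_\mu]^1$ gives $\Ext^1_Q(M_\mu\oplus M_\nu,M_\mu\oplus M_\nu)=0$, so $M_\mu\oplus M_\nu$ is rigid. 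For the ``if'' direction: suppose $[M_\mu,M_\nu]^1=[M_\nu,M_\mu]^1=0$. Then $\mu*\nu=\mu\oplus\nu$, and the only $\lambda$ with $K_\lambda(\munu)\ne 0$ is $\lambda=\mu\oplus\nu$ itself, because by \thmref{them:multiplicitycirc} $K_\lambda(\munu)\neq0$ forces $\mu*\nu\le\lambda\le\mu\oplus\nu$, which now pins $\lambda=\mu\oplus\nu$. Hence $[L(\mu)\circ L(\nu):L(\lambda)]=K_\lambda(\munu)=0$ for all $\lambda<\mu\oplus\nu$, and $K_{\mu\oplus\nu}(\munu)=q^{\epsilon(v_\mu,\alpha,v_\nu,\beta)}$ is a monomial by the last clause of \thmref{them:multiplicitycirc} together with \remref{rem_Ybdg} (here $v_1=v_\mu$, $v_2=v_\nu$, and the two quiver Grassmannians are points since $CK(M_\mu)$, $CK(M_\nu)$ vanish on the relevant dimension vectors as $M_\mu,M_\nu$ are rigid, cf.\ \lemref{lem_CK}). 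So the Jordan--Hölder series of $L(\mu)\circ L(\nu)$ has a single composition factor with multiplicity one: it is simple.

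The step I expect to be the main obstacle is verifying cleanly that in the rigid case the relevant quiver Grassmannians $\Gr_{v_1-v_\mu}(CK(M_\mu))$ and $\Gr_{v_2-v_\nu}(CK(M_\nu))$ are single points, equivalently that $v_1=v_\mu$ and $v_2=v_\nu$ for the unique maximal pair $(v_1,v_2)$ attached to $(\munu)\in\ext_\adb^{ger}(\mu\oplus\nu)$. This is where \remref{rem_Ybdg} is needed: the map $\pi\times\pi$ is birational exactly when $(v_\mu,v_\nu)=(v_1,v_2)$, which in turn is equivalent to $M_\lambda=M_\mu\oplus M_\nu$; since $\lambda=\mu\oplus\nu$ here this holds tautologically, so the monomiality of $K_{\mu\oplus\nu}(\munu)$ follows. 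I would also double-check the degenerate edge cases (e.g.\ $M_\mu$ or $M_\nu$ zero, or $\alpha=\beta$) but these are routine. Everything else is a direct bookkeeping argument built on the theorems already proved above, so no new geometric input is required.
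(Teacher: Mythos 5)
Your proof is correct and follows essentially the same strategy as the paper: the ``only if'' direction via \thmref{them:productsimple} combined with the observation (\exaref{exa:supportIC}) that $(\munu)$ is the generic pair for every $\lambda\le\mu\oplus\nu$ in the rigid case, and the ``if'' direction via \thmref{them:multiplicitycirc}. However, your ``if'' argument is more complete than the paper's. The paper stops at ``the only simple module in the Jordan--H\"older filtration is $L(\mu\oplus\nu)$'' without verifying that $L(\mu\oplus\nu)$ appears with multiplicity one (equivalently that $K_{\mu\oplus\nu}(\munu)$ is a single power of $q$), which is needed to conclude simplicity; you close this by invoking the explicit Poincar\'e-polynomial formula for $K_{\mu\oplus\nu}(\munu)$ from \thmref{them:multiplicitycirc} and \remref{rem_Ybdg} to show the two quiver Grassmannians reduce to points because $(v_1,v_2)=(v_\mu,v_\nu)$. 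One small wording nit: the Grassmannians $\Gr_{v_1-v_\mu}(CK(M_\mu))$ and $\Gr_{v_2-v_\nu}(CK(M_\nu))$ are points not because $CK(M_\mu)$ or $CK(M_\nu)$ vanish (they generally do not, even for rigid $M$) but simply because the dimension vectors $v_1-v_\mu$ and $v_2-v_\nu$ are zero, so one is taking the Grassmannian of the zero subrepresentation; you do state the correct reason ($v_1=v_\mu$, $v_2=v_\nu$) immediately afterwards, so this is only a phrasing issue.
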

\begin{proof}
If $\simppro{\mu}{\nu}$ is a simple module over $R_Q$, then $(\munu)$ is a support pair by Theorem \ref{them:productsimple}. That means for any $\lambda<\mu\oplus\nu$ $(\munu)\notin\ext_\adb^{ger}(\lambda)$. But $(\munu)$ is the unique minimal pair in $\KP{\alpha}\times\KP{\beta}$. This implies that no Kostant partition $\lambda$ satisfies $\lambda<\mu\oplus\nu$. Namely, $M_\mu\oplus M_\nu$ is a rigid representation of $Q$.

Suppose that $M_\mu\oplus M_\nu$ is a rigid representation of $Q$. It is easy to see 
\[(\munu)\in\ext_{\adb}^{ger}(\mu*\nu); \quad (\nu,\mu)\in \ext_{\beta,\alpha}^{ger}(\nu*\mu)\]
Since $[M_\mu,M_\nu]^1=[M_\nu,M_\mu]^1=0$, we have $M_{\mu*\nu}=M_{\nu*\mu}=M_\mu\oplus M_\nu$. It implies by Theorem \ref{theo_submodule} that $L(\mu\oplus\nu)$ is the socle of $L(\mu)\circ L(\nu)$ and it is also the socle of $L(\nu)\circ L(\mu)$. Applying the dual functor to $q^nL(\mu\oplus \nu)\hookrightarrow L(\nu)\circ L(\mu)$, we get $q^{n'}L(\mu\oplus\nu)$ is a head of $L(\mu)\circ L(\nu)$. By \cite[Lemma 3.2.3 (iV)]{KKKO}, we see that $L(\mu)\circ L(\nu)$ is a simple module. 
\end{proof}

In \cite{KKKO}, Kang, Kashiwara, Kim and Oh define $\mathfrak{d}(M,N)$ for any two modules $M,N$ over $R_Q$, which plays a key role in their works, see \cite[Definition 3.2.2]{KKKO}. We will calculate $\mathfrak{d}(L(\mu),L(\nu))$ for any two modules $L(\mu),L(\nu)$ such that their corresponding module $M_{\mu},M_{\nu}$ over $kQ$ are rigid. Let us first give a notation as follows. For any two elements $\alpha,\beta\in\ZZ[I]$, we define 
\[\alpha\cdot_\Omega\beta:=\sum_{h\in \Omega}\alpha_{s(h)}\beta_{t(h)}\]
\begin{Proposition}\label{pro:produclength2}
Let $M_\mu,M_\nu$ be two rigid modules over $Q$ and $L(\mu),L(\nu)$ be their corresponding modules over $R_Q$. Let us assume that the head and socle of $L(\mu)\circ L(\nu)$ are simple modules. We have 
$$\mathfrak{d}(L(\mu),L(\nu))=\odim \OO_{\mu*\nu}+\odim \OO_{\nu*\mu}-2(\alpha-\beta)\cdot_\Omega(\alpha-\beta)$$
\end{Proposition}
\begin{proof}
 As $M_\mu,M_\nu$ are rigid module, it follows $(\munu)\in\ext_\adb^{ger}(\lambda)$. By Theorem \ref{theo_submodule} and Theorem \ref{pro_topdegree}, we see that $q^{n_1}L(\mu*\nu)$ is a socle of $L(\mu)\circ L(\nu)$, where $n_1=2e_{\mu*\nu}(\munu)+d_{\mu*\nu}(\munu)$. Similarly, $q^{n_2}L(\nu*\mu)$ is a socle of $L(\nu)\circ L(\mu)$, where $n_2=2e_{\nu*\mu}(\nu,\mu)+d_{\nu*\mu}(\nu,\mu)$. Applying the dual functor on $q^{n_2}L(\nu*\mu)\hookrightarrow L(\nu)\circ L(\mu)$, we obtain 
 \[q^{(\adb)}L(\mu)\circ L(\nu)\twoheadrightarrow q^{-n_2}L(\nu*\mu)\] 
 It follows that $q^{-(\adb)-n_2}L(\nu*\mu)$ is a head of $L(\mu)\circ L(\nu)$. By \cite[Theorem 4.2.1; Corollary 4.2.3; Corollary 4.2.4]{KKKO}, we have
 \[\mathfrak{d}(L(\mu),L(\nu))=n_1+n_2+(\adb)\]
 Since $e_{\mu*\nu}(\munu)=\alpha\cdot_{\Omega}\beta$ and $e_{\nu*\mu}(\mu,\nu)=\beta\cdot_\Omega \alpha$, it follows 
 \begin{align*}
     \mathfrak{d}(L(\mu),L(\nu))=&(\alpha,\beta)+2(e_{\mu*\nu}(\munu)+e_{\nu*\mu}(\mu,\nu))+d_{\mu*\nu}(\munu)+d_{\nu*\mu}(\nu,\mu)\\
     =&(\alpha,\beta)+2(\alpha\cdot_\Omega\beta+\beta\cdot_\Omega\alpha)+\odim \OO_{\mu*\nu}+\odim \OO_{\nu*\mu}-(\adb)-2(\alpha\cdot_\Omega\alpha+\beta\cdot_\Omega\beta)\\
     =&\odim \OO_{\mu*\nu}+\odim \OO_{\nu*\mu}-2(\alpha-\beta)\cdot_\Omega(\alpha-\beta).
 \end{align*}
\end{proof}

\begin{Remark}
Because all indecomposable modules are rigid modules, it follows that cuspidal modules over $R_Q$ can be considered as a special case of the above proposition.  Hence, we generalize the recent results given by Kashiwara and Oh, see \cite[Main Theorem]{KO}. 
\end{Remark}

\end{document}